\numberwithin{equation}{section}
\newtheorem{thm}[equation]{Theorem}
\newtheorem{defn}[equation]{Definition}
\newtheorem{rem}[equation]{Remark}
\newtheorem{lem}[equation]{Lemma}
\def \f{\Phi}
\def \tf{\tilde{\f}}
\def \tfi{\tilde{\f}^{(-1)}}
\def \tu{\tilde{u}}
\def \te{\tilde{\varphi}}
\def \tv{\tilde{v}}
\def \tw{\tilde{w}}
\def \w{w}
\def \optr{\Upsilon_{\f}}
\def \optrt{\Upsilon_{\tf}}
\def \optri{\optr^{-1}}
\def \optrit{\optrt^{-1}}
\def \into{\int_{\Omega}}
\def \inttpo{\int_{\tf(\Omega)}}
\def \inttpos{\int_{\partial \tf(\Omega)}}
\newcommand{\cu}{{\operatorname{curl}}}
\newcommand{\di}{{\rm div}}
\newcommand{\gr}{{\rm grad}}
\newcommand{\dig}{{\rm div}_{\scriptscriptstyle\Gamma}}
\newcommand{\grg}{{\rm grad}_{\scriptscriptstyle\Gamma}}
\newcommand{\cau}{{\mathcal{C}}}
\title{Shape sensitivity analysis for electromagnetic cavities }
\author{Pier Domenico Lamberti and Michele Zaccaron}
\date{\today }
\begin{document}

\newcommand{\N}{\mathbb{N}}
\newcommand{\R}{\mathbb{R}}

\newcommand{\xn}{X_{\rm \scriptscriptstyle\tiny N}}

\newcommand{\Hmm}[1]{\leavevmode{\marginpar{\tiny%
$\hbox to 0mm{\hspace*{-0.5mm}$\leftarrow$\hss}%
\vcenter{\vrule depth 0.1mm height 0.1mm width \the\marginparwidth}%
\hbox to
0mm{\hss$\rightarrow$\hspace*{-0.5mm}}$\\\relax\raggedright #1}}}


\maketitle
\begin{center}    

\end{center}
\vspace{0.4cm}

\noindent
{\bf Abstract:} We study the dependence of the eigenvalues of time-harmonic  Maxwell's equations in a cavity upon variation of  its shape. 
The analysis concerns all eigenvalues both simple and multiple. We provide 
analyticity results for the dependence of the elementary symmetric functions of the eigenvalues splitting a multiple eigenvalue, as well as a Rellich-Nagy-type result describing the corresponding  bifurcation phenomenon. We also address an isoperimetric problem and characterize the critical cavities for the symmetric functions of the eigenvalues subject to isovolumetric or isoperimetric  domain perturbations and prove that balls are critical. 
We include known formulas for the eigenpairs in a ball and calculate the first one. 

\vspace{11pt}

\noindent
{\bf Keywords:}  Maxwell's equations,  cavities, shape sensitivity, Hadamard formula, optimal shapes.

\vspace{6pt}
\noindent
{\bf 2010 Mathematics Subject Classification:} 35Q61, 35Q60,  35P15.

\section{Introduction}
This paper is devoted to the analysis  of the dependence of  eigenvalues of time-harmonic Maxwell's equations in a cavity 
$\Omega $ of ${\mathbb{R}}^3$  upon variation of the shape of $\Omega$. Here cavities are  understood as bounded connected open sets, in other words, bounded domains of the Euclidean space. Moreover, unless otherwise indicated, they are assumed to be sufficiently smooth  in order to guarantee the validity of 
a number of facts, primarily the celebrated Gaffney inequality. 

We point out  from the very beginning that   the study of electromagnetic cavities has many applications, for example in designing cavity resonators or shielding structures for electronic circuits,   see e.g.,  \cite[Chp.~10]{hanyak} for a detailed  introduction to this field of investigation. See 
also  the classical books \cite{ces}, \cite{dali3}, \cite{gira} and the more recent \cite{monk, ned, rsy} for extensive discussions and references concerning the mathematical theory of electromagnetism.  We also refer to  the  well-known  papers \cite{ co91, cocoercive, coda} by M. Costabel and M. Dauge, as well as to the more recent  papers \cite{ammari, bauer, calamo, cocomo, coda2, lamstra, pauly, yin}.

Recall that  time-harmonic Maxwell's equations in a homogeneous isotropic medium filling a domain $\Omega$ in $\R^3$   can be written  as 
\begin{equation}\label{Maxwelltimeharm}
\cu E - {\rm i} \, \omega  \, H = 0\,,\,\,\cu H + {\rm i} \, \omega  \, E = 0\,,
\end{equation}
where $E, H$ denote the spatial parts of the electric and the magnetic field respectively and  $\omega  > 0$ is the angular frequency.  
Here, for simplicity  the electric permittivity $\varepsilon$  and the  magnetic permeability   $\mu$  of the medium have been normalized by setting $\epsilon =\mu =1$.
Note that the solutions $E$, $H$ to \eqref{Maxwelltimeharm} are  divergence-free.  

As far as the boundary conditions are concerned, we consider those of a perfect conductor, hence we require that  
\begin{equation}\label{bdc}
\nu \times E=0 \ \ {\rm and }\ \  H\cdot \nu =0,
\end{equation}
where $\nu $ denotes the unit outer normal to the boundary  $\partial \Omega$ of $\Omega$.

Operating by $\cu$ in each of the equations 
\eqref{Maxwelltimeharm} and setting $\lambda =\omega^2$, one obtains the following well-known boundary value problems 
\begin{equation}\label{claE}\left\{
\begin{array}{ll}
\cu^2 E=\lambda E,&\ {\rm in}\ \Omega,\\
\nu \times E=0,&\ {\rm in }\ \partial \Omega,
\end{array}\right.
\end{equation}
and 
\begin{equation}\label{claH}\left\{
\begin{array}{ll}
\cu^2 H=\lambda H,&\ {\rm in}\ \Omega,\\
 H\cdot \nu =0,&\ {\rm in }\ \partial \Omega,\\
\nu \times \cu H=0,&\ {\rm in }\ \partial \Omega .\\
\end{array}\right.
\end{equation}
Note that the solutions to \eqref{claE} automatically satisfy the boundary condition $\cu E \cdot \nu =0,\ {\rm on }\ \partial \Omega$, see Lemma~\ref{fundamental}.
As it is natural, each of the two problems provide the same eigenvalues (cf., \cite{zhang}) which can be represented by an increasing, divergent sequence
$$
0<\lambda_1[\Omega]\le \lambda_2[\Omega]\le \dots \le \lambda_n[\Omega]\le \dots 
$$
where each eigenvalue is repeated according to its multiplicity (that is, the dimension of the corresponding eigenspace), which is finite. Note that, in fact, the corresponding resolvent operators are compact and self-adjoint.   

In this paper, we study the dependence of $\lambda_n[\Omega]$ on $\Omega$ and we aim  at proving analyticity results for all eigenvalues, both simple and multiple, and addressing an optimization problem concerning the role of balls in isovolumetric or isoperimetric domain perturbations.  

Despite the importance of problem \eqref{Maxwelltimeharm}, these issues are not much discussed in the literature and we are aware only of the paper \cite{jimbo} by  S. Jimbo which provides a Hadamard-type formula for the shape derivatives of simple eigenvalues and quotes 
the book  \cite{hira} by  K. Hirakawa  where an analogous but different  formula is provided  on the base of heuristic computations.  In particular, we note that the formula in \cite[(4-88), p. 92] {hira} agrees with our Hadamard formula  \eqref{hadamardsimple} below.  

On the contrary, analogous stability and optimization issues for problems arising in elasticity theory have been largely investigated in the literature and many results are available, see e.g., the classical  monograph  \cite{henry} by D. Henry.  See also \cite{bubu, henrot}  for  more information about the  variational approach, and the forthcoming monograph \cite{dalamu} for a functional analytic approach to stability problems.    In particular, it appears that for  classical boundary value problems involving rotational invariant operators, a kind of principle holds, namely,  {\it  all simple eigenvalues and the elementary symmetric functions of multiple eigenvalues depend real analytically  on the domain, and balls are corresponding critical domains with respect to isovolumetric and isoperimetric domain perturbations}, see  \cite{bula2015}.
In this paper, we prove that the eigenvalue problem \eqref{Maxwelltimeharm} obeys this principle. In particular, in Theorem~\ref{symmetrican}  we provide the analyticity result with the appropriate Hadamard formula,     and in Theorem~\ref{over}
 we prove that an open set $\Omega$ is a critical domain, under the volume  constraint ${\rm Vol }(\Omega)={\rm constant}$, for the elementary symmetric  functions of the eigenvalues bifurcating from an eigenvalue $\lambda$ of multiplicity $m$
if and only if the following extra boundary  condition is satisfied:
\begin{equation}\label{overintro}
\sum_{k=1}^m\left(|E^{(i)}|^2-|H^{(i)}|^2\right)= c\ \ {\rm on}\ \partial \Omega , 
\end{equation}
where $c$ is a constant,  $E^{(i)}$, $i=1, \dots m$, is an orthonormal basis in $(L^2(\Omega))^3$ of the (electric) eigenspace associated with $\lambda$, and $H^{(i)}=  -{\rm i}  \operatorname{curl}    E^{(i)}/ \sqrt{\lambda} $ is the magnetic field associated with $E^{(i)}$ as in \eqref{Maxwelltimeharm}. Then, in Theorem~\ref{balls} we prove that condition
\eqref{overintro} is satisfied if $\Omega$ is a ball. Similarly, in the case of isoperimetric domain perturbations and perimeter constraint  ${\rm Per }(\Omega)={\rm constant}$, the extra boundary  condition reads
\begin{equation}\label{overper}
\sum_{k=1}^m\left(|E^{(i)}|^2-|H^{(i)}|^2\right)= c {\mathcal{H}}\ \ {\rm on}\ \partial \Omega , 
\end{equation}
where $c$ is a constant and ${\mathcal{H}}$ is the mean curvature of $\partial \Omega$ (the sum of the principal curvatures). 

It would be interesting to characterise all domains for which the above conditions are satisfied.

We note that using the elementary symmetric functions
of multiple eigenvalue, rather than the eigenvalues themselves, is quite natural since domain perturbations typically split the multiplicities of 
the eigenvalues and produce bifurcation phenomena responsible for corner points in the corresponding  diagrams. Moreover, we believe that in the specific case of Maxwell equations, being able to deal with multiple eigenvalues is quite important  since all eigenvalues of the Maxwell system in a ball are multiple (in particular, the first eigenvalue  has multiplicity equal to three, see Theorem~\ref{firsteigen}).  

In order to discuss the behaviour of the eigenvalues $\lambda_n[\Omega]$, it is clearly equivalent to study either problem \eqref{claE} or problem \eqref{claH}.  We have decided to choose  the first one.  

Our method is based on the theoretical results obtained in \cite{lala} which concerns the case of general families of compact self-adjoint operators in Hilbert space with variable scalar product. Here we consider a class of open sets $\Phi (\Omega)$ identified by a class of diffeomorphisms $\Phi$ defined on a fixed reference domain $\Omega$. Then our problem is set on $\Phi (\Omega)$ and pulled-back to $\Omega$ by means of the covariant Piola transform associated with $\Phi$. This allows to recast the problem on $\Omega$ and to reduce it to the study of a curl-div problem with parameters depending on $\Phi$, the eigenvalues of which are exactly $\lambda_n[\Phi (\Omega)]$. Then, passing to the analysis of the corresponding  resolvents defined in $(L^2(\Omega ))^3$ (with an appropriate equivalent scalar product depending on $\Phi $) we can prove our analyticity results for the maps $\Phi \mapsto \lambda_n[\Phi (\Omega)] $
and study their critical points under volume constraint ${\rm Vol}\, ( \Phi (\Omega))={\rm const.}$ or perimeter constraint  $|{\rm Per}\,  \Phi (\Omega)|={\rm const.}$

We note that the families of compact self-adjoint operators under consideration are obtained by following the method of \cite{cocoercive} which consists in adding the  penalty term $-\tau  \nabla \operatorname{div} E$    in the equation \eqref{claE}, depending on an arbitrary positive number 
$\tau$. Then it is enough to observe that the eigenvalues of the penalized   problem are given  by the union of the eigenvalues of problem \eqref{claE} and the eigenvalues of the Dirichlet Laplacian $-\Delta $  in $\Omega$, multiplied by $\tau $, see \cite[Theorem~1.1]{coda}, Lemma~\ref{union}  and Remark~\ref{risonanzarem}. In particular, it follows that  the analyticity result stated in the first part of our Theorem~\ref{symmetrican}  below yields (in the case of regular domains) also the analyticity result proved in \cite{lala} for the eigenvalues of the Dirichlet Laplacian. 

Besides the results  described above, we would like to highlight two by-pass products of our analysis. First, we prove a Rellich-Nagy-type result describing the bifurcation phenomenon mentioned above. Namely, given an eigenvalue $\lambda $ of multiplicity $m$, say $\lambda =\lambda_n=\dots =\lambda_{n+m-1}$, and a perturbation of $\Omega$ of the form $\Phi_{\epsilon }(\Omega )$  with $\Phi_{\epsilon } (x) = x+ \epsilon V(x) $ for all $x\in \Omega$ where $V$ is a ${\mathcal{C}}^{1,1}$ vector field defined on $\overline{ \Omega }$,  we  prove that the set of right  derivatives at $\epsilon =0$ of $\lambda_{n+k}[\Phi_{\epsilon} (\Omega)] $ for all $k=0, \dots , m-1$ (which coincides with the set of left derivatives, although each right and left derivative may be different) 
are given by the eigenvalues of the matrix  $(M_{i,j})_{i,j=1,\dots ,m}$ where
\begin{eqnarray}M_{i,j} =
 \int_{\partial \Omega} \left(   \lambda  E^{(i)}\cdot  E^{(j)}  - \operatorname{curl}  E^{(i)}\cdot  \operatorname{curl} E^{(j)}\right)  V \cdot \nu  \, d\sigma \, ,
\end{eqnarray}
 and $E^{(i)}$, $i=1, \dots m$, is a (real) orthonormal basis in $(L^2(\Omega))^3$ of the (electric) eigenspace associated with $\lambda$. In particular, if $\lambda_n$ is a simple eigenvalue we get the Hadamard formula
 \begin{equation}
 \label{hadamardsimple}
 \left. \frac{d}{d\epsilon }\lambda _{n}[\Phi _{\epsilon} (\Omega )]\right\vert_{\epsilon=0}=\int_{\partial \Omega} \left(   \lambda|  E|^2  - |\operatorname{curl}  E|^2 \right)  V \cdot \nu  \, d\sigma = \lambda \int_{\partial \Omega} \left(     |E|^2  - |H|^2 \right)  V \cdot \nu  \, d\sigma \, ,
 \end{equation}
 where $E$ is an eigenvector normalized in $(L^2(\Omega))^3$ and  $H=-{\rm i}  \operatorname{curl}    E/ \sqrt{\lambda}$ as above.

Second, by using these formulas we prove a Rellich-Pohozaev formula for the   Maxwell eigenvalues, namely any eigenvalue $\lambda$ can be  represented by the formula 
\begin{eqnarray}\lambda =
\int_{\partial \Omega} \left( | \operatorname{curl}    E |^2   - |\operatorname{curl} H|^2 \right)  x \cdot \nu  \, d\sigma \, ,
\end{eqnarray}
where $E$ is any (electric) eigenvector associated with $\lambda$ normalized in  $(L^2(\Omega))^3$  and  $H=-{\rm i}  \operatorname{curl}    E/ \sqrt{\lambda}$. In particular, we have the identity
\begin{eqnarray}
\int_{\partial \Omega} \left( |   H |^2   - |E|^2 \right)  x \cdot \nu  \, d\sigma =1 .
\end{eqnarray}

We note that our formulas are proved under the assumption that the eigenvectors under consideration are of class $H^2$ (which means that  they have  square summable derivatives up to the second order)   and that this assumption is satisfied if the corresponding domain  is sufficiently regular, for example of class $C^{2,1}$, see Remark~\ref{regularity}. 

This paper is organized as follows. Section~\ref{preliminaries} is devoted to a number of preliminary results on the function spaces involved and to the variational formulations of the eigenvalue problems under consideration. In Section~\ref{transec} we formulate the domain perturbation problem and the corresponding domain transplantation process. In Section~\ref{analitic} we prove our main analyticity results, while in Section~\ref{oversec}   we address the optimization problem, we characterize the critical domains by means of appropriate overdetermined problems and prove that balls are critical domains. In Appendix, for the convenience of the reader, we include a few known facts about the eigenvalue problem in a ball and we compute the first eigenpair.


\section{Preliminaries on the  eigenvalue problem}
\label{preliminaries}

In this paper, the vectors of $\mathbb{R}^3$ are understood  as row vectors.  The transpose of a matrix  $A$ is denoted by $A^T$, hence  if $a \in \mathbb{R}^3$, then $a^T$ is a column vector.  If $a,b \in \mathbb{R}^3$ are two vectors, we denote by $\cdot$ the usual scalar product, that is $a \cdot b = a b^T$.

Let $\Omega$ be a bounded open set  in $\R^3$.  Since  problems \eqref{claE} and \eqref{claH} are associated with self-adjoint operators, for the sake of simplicity and without any loss of generality, the space  $L^2(\Omega)$ is understood  as a space of real-valued functions. 
In particular, the usual scalar product of two vector fields  $u,v\in (L^2(\Omega))^3$  is given by 
$\int_{\Omega}u\cdot vdx=\int_{\Omega}uv^Tdx $.

We  denote by  $H(\cu, \Omega)$  the space of vector fields
$u\in  (L^2(\Omega))^3 $ with distributional curl in  $(L^2(\Omega))^3$, endowed with the norm
$$ ||u||_{H(\cu, \Omega)} = \left( ||u||^{2}_{ L^2(\Omega)} + ||\cu u||^{2}_{L^2(\Omega)} \right)^{1/2}. $$
We denote by  $H_{0}(\cu, \Omega)$ the closure in $H(\cu, \Omega)$ of the space of ${\mathcal{C}}^{\infty}$-functions with compact support
in $\Omega$.  Similarly, we denote by $ H(\di, \Omega) $ the space of vector fields
$u\in  (L^2(\Omega))^3 $ with distributional divergence in  $(L^2(\Omega))^3$, endowed with the norm
$$ ||u||_{H(\di, \Omega)} = \left( ||u||^{2}_{ L^2(\Omega)} + ||\di u||^{2}_{L^2(\Omega)} \right)^{1/2}. $$
Moreover, we consider the space
$$
 X_{\rm \scriptscriptstyle\tiny N}(\Omega) = H_0(\cu, \Omega) \cap H (\di, \Omega)
$$
and we endow it with the norm defined by 
$$
 ||u||_{  X_{\rm \scriptscriptstyle\tiny N}(\Omega)  }\! =\! ( ||u||^{2}_{L^2(\Omega) } + ||\cu u||^{2}_{L^2(\Omega)} + ||\di u||^{2}_{L^2(\Omega)} )^{1/2} ,
$$
for all $u\in  X_{\rm \scriptscriptstyle\tiny N}(\Omega) $. Finally, we set 
$$X_{\rm \scriptscriptstyle\tiny N}(\di \, 0, \Omega) = \{u \in X_{\rm \scriptscriptstyle\tiny N}(\Omega) : \di u = 0 \,\, {\rm in}\ \Omega \}.$$

For details on  these operators and  spaces we refer to  \cite{ces}, \cite{dali3}, \cite{gira},  \cite{rsy}.
\newline

If $\Omega$ is  sufficiently regular, say of class ${\mathcal{C}}^{1,1}$, the space $\xn (\Omega) $ is continuously embedded into the space $(H^1(\Omega))^3$ of vector fields with components in the standard Sobolev 
space $H^1(\Omega)$ of functions in $L^2(\Omega)$ with first order weak derivatives in $L^2(\Omega)$. On the other hand, since $H^1(\Omega)$ is compactly embedded into $L^2(\Omega)$, it follows that also $\xn (\Omega) $ is compactly embedded into  $(L^2(\Omega))^3$. We note that the compactness of this embedding 
holds also under weaker assumptions on the regularity of $\Omega$. 
More precisely, we have the the following theorem the proof of which can be found in  \cite[Lemma~3.4, Theorem~3.7]{gira}.

\begin{thm}\label{gaffney} The following statements hold.
\begin{itemize}
\item[(i)] If  $\Omega $ is a bounded, simply connected open set in $\R^3$ of class ${\mathcal{C}}^{0,1}$ and $\partial \Omega$ has only one connected component then 
 there exists $c>0$ such that
$$\| u\|_{L^2(\Omega)}\le c \, \|  \cu u\|_{L^2(\Omega)},$$
for all $u\in X_{\rm \scriptscriptstyle\tiny N}(\di \, 0, \Omega)$, and
$$\| u\|_{L^2(\Omega)}\le c \, \left( \|  \cu u\|_{L^2(\Omega)} +  \|  \di u\|_{L^2(\Omega)} \right),$$
for all  $u\in X_{\rm \scriptscriptstyle\tiny N}( \Omega)$. Moreover, the embedding $X_{\rm \scriptscriptstyle\tiny N}( \Omega)\subset  (L^2(\Omega))^3$ is compact. 
\item[(ii)] If $\Omega$ is  a bounded  open set in $\R^3$ of class ${\mathcal{C}}^{1,1}$ then $ X_{\rm \scriptscriptstyle\tiny N}( \Omega)$ is continuously embedded into
$(H^1(\Omega))^3$,  and there exists
$c>0$ such that the Gaffney inequality 
\begin{equation}\| u\|_{(H^1(\Omega))^3}\le c \, \left( \| u\|_{L^2(\Omega)}+   \|  \cu u\|_{L^2(\Omega)} +  \|  \di u\|_{L^2(\Omega)} \right),\end{equation}
holds for all $u\in   X_{\rm \scriptscriptstyle\tiny N}( \Omega)$. In particular, the embedding $ X_{\rm \scriptscriptstyle\tiny N}( \Omega))\subset  (L^2(\Omega))^3$ is compact.
\end{itemize}
\end{thm}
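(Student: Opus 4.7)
The plan is to tackle part (ii) first, since the Gaffney inequality there immediately yields the compact $L^2$-embedding via Rellich--Kondrachov, and then handle the harder part (i) where $H^1$-regularity on $\Omega$ is not available.

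For part (ii), I would establish the Gaffney--Weitzenb\"ock integral identity for smooth fields: for $u \in (C^2(\overline{\Omega}))^3$ with $\nu \times u = 0$ on $\partial\Omega$, one has
$$\int_\Omega |\nabla u|^2 \, dx = \int_\Omega \left(|\cu u|^2 + |\di u|^2\right) dx + \int_{\partial\Omega} \mathcal{B}(u,u) \, d\sigma,$$
where $\mathcal{B}$ is a quadratic form built from the Weingarten map of $\partial\Omega$. The derivation starts from the pointwise identity $-\Delta u = \cu \cu u - \nabla \di u$, multiplied by $u$ and integrated by parts. Under the $C^{1,1}$ hypothesis, $\mathcal{B}$ is pointwise bounded by a constant multiple of $|u|^2$, so the boundary integral is controlled by $\|u\|_{L^2(\partial\Omega)}^2$. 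The trace inequality $\|u\|_{L^2(\partial\Omega)}^2 \le \varepsilon \|u\|_{(H^1(\Omega))^3}^2 + C_\varepsilon \|u\|_{(L^2(\Omega))^3}^2$, together with an absorption argument, yields the Gaffney inequality on a dense subclass, and density of smooth vector fields in $\xn(\Omega)$ extends it to the whole space. Rellich's theorem upgrades the resulting continuous $H^1$-embedding to a compact $L^2$-embedding.

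For part (i), the pointwise identity cannot be exploited. I would instead rely on a Helmholtz-type decomposition: for $u \in \xn(\Omega)$, let $\phi \in H^1_0(\Omega)$ solve $\Delta \phi = \di u$ and set $w := u - \nabla \phi$, so that $w \in X_{\rm \scriptscriptstyle\tiny N}(\di \, 0, \Omega)$ (the tangential trace of $\nabla \phi$ vanishes because $\phi$ is constant on the connected boundary $\partial\Omega$). The map $u \mapsto \nabla \phi$ is compact from $\xn(\Omega)$ into $(L^2(\Omega))^3$ by scalar Rellich applied to $\phi$. For the divergence-free part $w$, the assumption that $\Omega$ is simply connected with connected boundary permits the construction of a vector potential $A \in (H^1(\Omega))^3$ with $w = \cu A$ and $\|A\|_{H^1(\Omega)} \le C\|w\|_{L^2(\Omega)}$, from which Rellich again delivers compactness. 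Assembling the two pieces gives the compact embedding $\xn(\Omega) \subset (L^2(\Omega))^3$. The Poincar\'e-type inequalities then follow by a standard contradiction argument: a normalized sequence $u_n$ with $\cu u_n \to 0$ (and $\di u_n \to 0$) would, by compactness, converge to a curl-free, divergence-free field with vanishing tangential trace; under the topological hypotheses, the only such field is zero, a contradiction.

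The chief obstacle is the vector-potential construction on a merely Lipschitz, simply connected domain with connected boundary --- a nontrivial piece of elliptic theory that rests precisely on those topological hypotheses and is the essential reason they appear in the statement. Once it is granted, the remaining steps are bookkeeping with standard tools.
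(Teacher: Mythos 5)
First, a point of reference: the paper does not prove this theorem at all --- it simply cites \cite[Lemma~3.4, Theorem~3.7]{gira}, so your proposal is being measured against the standard arguments in that literature rather than against an in-paper proof. Your overall architecture is exactly the classical one: for (i), the splitting $u=\nabla\phi+w$ with $\phi\in H^1_0(\Omega)$ solving $\Delta\phi=\di u$, a vector potential for the divergence-free part under the stated topological hypotheses, and a compactness-contradiction argument for the Poincar\'e-type inequalities; for (ii), the Gaffney--Weitzenb\"ock identity with a curvature boundary term absorbed via the $\varepsilon$-trace inequality. This is faithful to the cited source and to the classical proofs of Friedrichs--Gaffney type.

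Two steps are compressed to the point of being wrong as literally written. The more serious one: ``$\|A\|_{H^1}\le C\|w\|_{L^2}$, from which Rellich again delivers compactness'' does not work as stated. Rellich gives strong $L^2$ convergence of a subsequence of $A_n$, but you need strong $L^2$ convergence of $w_n=\cu A_n$, and differentiation is not continuous from $L^2$ to $L^2$. The missing (standard) step is a duality argument exploiting the boundary condition: since $w_n-w_m\in H_0(\cu,\Omega)$,
\begin{equation*}
\|w_n-w_m\|_{L^2(\Omega)}^2=\int_{\Omega}(w_n-w_m)\cdot\cu(A_n-A_m)\,dx=\int_{\Omega}\cu(w_n-w_m)\cdot(A_n-A_m)\,dx\le \|\cu(w_n-w_m)\|_{L^2(\Omega)}\,\|A_n-A_m\|_{L^2(\Omega)},
\end{equation*}
which tends to zero because $\cu w_n$ is bounded and $A_n$ converges strongly. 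The same remark applies to the gradient part: strong $L^2$ convergence of $\phi_n$ must be upgraded to strong convergence of $\nabla\phi_n$ by pairing $\nabla(\phi_n-\phi_m)$ against itself and integrating by parts onto $\Delta\phi_n-\Delta\phi_m=\di(u_n-u_m)$, which is merely bounded. With these insertions part (i) is complete.

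The second compressed step is in (ii): ``density of smooth vector fields in $\xn(\Omega)$ extends it to the whole space.'' The density of fields in $C^2(\overline{\Omega})^3$ with $\nu\times u=0$ in the full $\xn$-norm (controlling curl \emph{and} divergence simultaneously) is itself the delicate part of the theorem, and it is essentially equivalent to the $H^1$-regularity you are trying to prove; this is precisely why the Gaffney inequality fails on general Lipschitz domains. The standard non-circular routes are either to approximate $\Omega$ from inside by smooth domains, or to prove $\xn(\Omega)\subset (H^1(\Omega))^3$ directly by viewing $u$ as the solution of an elliptic boundary value problem and invoking $C^{1,1}$ elliptic regularity. As a proof proposal yours identifies the right identity and the right absorption mechanism, but the density claim should be acknowledged as a theorem requiring proof rather than a routine approximation.
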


At some point, we shall also need the following 

\begin{lem}\label{fundamental}
Let $\Omega$ be a bounded open set in ${\mathbb{R}}^3$ of class ${\mathcal{C}}^{0,1}$. Then 
\begin{equation}\label{fundamental0}
\operatorname{curl}  u \cdot \nu =0 \ \ {\rm on}\ \partial \Omega,
\end{equation}
for all $u\in  H_0(\cu, \Omega)$ such that $  \operatorname{curl}  u \in H^1(\Omega)$.
\end{lem}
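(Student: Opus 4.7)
The plan is to combine two facts: the identity $\operatorname{div}(\operatorname{curl} u) = 0$, which holds distributionally for any $u$ and hence pointwise a.e.\ here since $\operatorname{curl} u \in H^1(\Omega)$; and the vanishing of the tangential trace $\nu \times u$ on $\partial\Omega$, encoded in the definition of $H_0(\operatorname{curl},\Omega)$. These will be glued together via the classical Green formula for the curl operator, namely
\begin{equation*}
\int_\Omega \operatorname{curl} u \cdot v \, dx - \int_\Omega u \cdot \operatorname{curl} v \, dx = \int_{\partial\Omega} (\nu \times u) \cdot v \, d\sigma,
\end{equation*}
valid for $u \in H(\operatorname{curl},\Omega)$ and $v \in (H^1(\Omega))^3$ (this is a standard trace identity on Lipschitz domains, see e.g.\ Girault--Raviart as quoted for Theorem~\ref{gaffney}).

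Pick an arbitrary $\varphi \in \mathcal{C}^\infty(\overline{\Omega})$ and apply the Green formula with $v = \nabla\varphi \in (H^1(\Omega))^3$. Since $\operatorname{curl}(\nabla\varphi)=0$ and $\nu\times u = 0$ on $\partial\Omega$ (because $u \in H_0(\operatorname{curl},\Omega)$), the identity collapses to
\begin{equation*}
\int_\Omega \operatorname{curl} u \cdot \nabla\varphi \, dx = 0.
\end{equation*}
On the other hand, since $\operatorname{curl} u \in H^1(\Omega)$, its normal trace $\operatorname{curl} u \cdot \nu$ on $\partial\Omega$ is well defined in $H^{1/2}(\partial\Omega)$, and the ordinary divergence theorem applied to $\varphi\,\operatorname{curl} u$ together with $\operatorname{div}(\operatorname{curl} u)=0$ yields
\begin{equation*}
\int_\Omega \operatorname{curl} u \cdot \nabla\varphi\, dx = \int_{\partial\Omega} \varphi \, \operatorname{curl} u \cdot \nu \, d\sigma.
\end{equation*}

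Combining the two displays gives $\int_{\partial\Omega} \varphi \, \operatorname{curl} u \cdot \nu \, d\sigma = 0$ for every $\varphi \in \mathcal{C}^\infty(\overline{\Omega})$. Since the traces of such $\varphi$ are dense in $L^2(\partial\Omega)$ (and $\operatorname{curl} u \cdot \nu \in L^2(\partial\Omega)$), we conclude $\operatorname{curl} u \cdot \nu = 0$ on $\partial\Omega$, as claimed.

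The only non-routine point to check is the admissibility of $v = \nabla\varphi$ as a test function in the curl Green formula on a merely $\mathcal{C}^{0,1}$ domain, and the justification that the duality pairing $\langle \nu\times u, \nabla\varphi\rangle$ vanishes for $u\in H_0(\operatorname{curl},\Omega)$: this follows by density, approximating $u$ by $\mathcal{C}^\infty_c(\Omega)$ vector fields in the $H(\operatorname{curl},\Omega)$ norm and passing to the limit using continuity of the tangential trace from $H(\operatorname{curl},\Omega)$ to $H^{-1/2}(\partial\Omega)$. I expect this trace/density step to be the main technical obstacle, but it is standard on Lipschitz domains.
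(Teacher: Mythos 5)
Your proof is correct and follows essentially the same route as the paper's: both test $\operatorname{curl}u$ against gradients $\nabla\varphi$, use the curl integration-by-parts formula together with the vanishing tangential trace of $u$ to show $\int_\Omega \operatorname{curl}u\cdot\nabla\varphi\,dx=0$, convert this to the boundary integral $\int_{\partial\Omega}\varphi\,(\operatorname{curl}u\cdot\nu)\,d\sigma$ via the divergence theorem and $\operatorname{div}\operatorname{curl}u=0$, and conclude by density. The only difference is cosmetic: the paper writes the chain of equalities in the opposite order and tests with $\varphi\in H^2(\Omega)$ before passing to $H^1(\Omega)$ by approximation.
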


\begin{proof} By integrating by parts and using the well-known formula 
\begin{equation}\label{intparts}
\int_{\Omega}\cu u \cdot F \, dx =\int_{\Omega} u \cdot \cu  F \,dx+\int_{\partial \Omega }(\nu \times  u )\cdot F \, d\sigma\, ,
\end{equation}
we get
\begin{eqnarray*}\lefteqn{
\int_{\partial \Omega} (\operatorname{curl}  u \cdot \nu) \varphi \, d\sigma =\int_{\partial \Omega } 
 (\operatorname{curl}  u \cdot \nu) \varphi \, d\sigma - \int_{ \Omega } 
 \di \operatorname{curl}  u \, \varphi \, dx  }    \\
 &  \qquad = \int_{ \Omega } \operatorname{curl}  u \cdot \nabla \varphi \, dx  
 = \int_{ \Omega } u \cdot \operatorname{curl}    \nabla \varphi \,  dx   
 + \int_{ \partial \Omega }  (\nu \times    u) \cdot \nabla \varphi   \,  d\sigma    =0 ,
\end{eqnarray*}
for all $\varphi \in H^2(\Omega)$, hence by a standard  approximation argument, we deduce that 
$$
\int_{\partial \Omega} (\operatorname{curl}  u \cdot \nu) \varphi \, d\sigma=0
$$
for all $\varphi \in H^1(\Omega)$, which allows to prove the validity of \eqref{fundamental0} by the Fundamental Lemma of the Calculus of Variations. 
\end{proof}

Recall that the electric problem   under consideration is 
\begin{equation}\label{main}
\left\{
\begin{array}{ll}
\cu\, \cu u = \lambda u,& \ \ {\rm in}\ \Omega ,\\
\di u =0, & \ \ {\rm in}\ \Omega ,\\
\nu \times u = 0 ,& \ \ {\rm on}\ \partial \Omega ,
\end{array}
\right.
\end{equation}
which is nothing but problem \eqref{claE} with the precise indication that the vector field $u$ is divergent free. 

It is not difficult to see that the weak formulation of   \eqref{main}  can be written as 
\begin{equation}\label{mainweak}
\int_{\Omega}\cu u \cdot \cu \varphi \, dx =\lambda \int_{\Omega} u \cdot \varphi \, dx, \ \ {\rm for\ all}\ \varphi \in \xn (\di\, 0, \Omega ),
\end{equation}
in the unknowns $u\in \xn (\di\, 0, \Omega )$ and $\lambda \in \R$. 

Since for our purposes we prefer to work in the space $\xn (\Omega)$ rather than in the space $\xn(\di\,  0, \Omega) $, following \cite{coda, cocoercive},  we introduce  a penalty term
in the equation and  we replace problem  \eqref{main} by the problem
\begin{equation}\label{mainpen}
\left\{
\begin{array}{ll}
\cu\, \cu u  -\tau \nabla \di u = \lambda u,& \ \ {\rm in}\ \Omega ,\\
\nu \times u = 0 ,& \ \ {\rm on}\ \partial \Omega ,
\end{array}
\right.
\end{equation}
where $\tau $ is any fixed positive real number.  Problem \eqref{mainpen} is understood in the weak sense as follows:
\begin{equation}
\label{mainpenweak}
\int_{\Omega}\cu u \cdot \cu \varphi \, dx+\tau \int_{\Omega} \di u\,  \di \varphi \,dx=\lambda \int_{\Omega} u \cdot \varphi \, dx, \ \ {\rm for\ all}\ \varphi \in \xn (\Omega ),
\end{equation}
in the unknowns $u\in \xn (\Omega)$ and $\lambda \in \R$.

It is obvious that the solutions of problem \eqref{mainweak} are    the divergence free  solutions of \eqref{mainpenweak}. On the other hand, it is also not difficult to see that the solutions of problem \eqref{mainpenweak} which are not divergence free are given by the vector fields  $u=\nabla f $  of the gradients of the solutions $f$ to the Helmohltz 
equation with Dirichlet boundary conditions, that is 
\begin{equation}\label{hel}
\left\{
\begin{array}{ll}
-\Delta f = \frac{\lambda}{\tau} f,& \ \ {\rm in}\ \Omega ,\\
f = 0 ,& \ \ {\rm on}\ \partial \Omega .
\end{array}
\right.
\end{equation}

In fact, we have the following result from \cite{coda}

\begin{lem}\label{union} Let $\Omega$ be a bounded domain in $\R^3$ of class ${\mathcal{C}}^{0,1}$.  A vector field $u\in \xn (\Omega)$ is a solution of problem \eqref{mainpen} with $\di u=0$ if and only if $u\in \xn (\di\, 0,\Omega )$ is a solution of problem 
\eqref{mainweak}. Moreover, a vector field $u\in \xn (\Omega)$  with $\di u\ne 0$ is a solution of problem \eqref{mainpen}  if and only if $u=\nabla  f$ where $f\in H^1_0(\Omega)$ is a solution of problem \eqref{hel}.  In particular, the set of eigenvalues of problem \eqref{mainpen} are given by the union of the 
set of eigenvalues of problem \eqref{mainweak} and the set of eigenvalues of the Dirichlet Laplacian in $\Omega$ multiplied by $\tau $. 
\end{lem}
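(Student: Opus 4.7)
The plan is to establish an orthogonal decomposition of $\xn(\Omega)$ that simultaneously diagonalizes the $L^2$ inner product and the bilinear form $a(u,\varphi) := \int_\Omega \cu u \cdot \cu \varphi\, dx + \tau \int_\Omega \di u \, \di \varphi\, dx$ underlying \eqref{mainpenweak}. Given $u \in \xn(\Omega)$, one defines $h \in H^1_0(\Omega)$ as the unique weak solution of the Dirichlet problem $\Delta h = \di u$ (solvable since $\di u \in L^2(\Omega)$), and sets $u_0 := u - \nabla h$. By construction $\di u_0 = 0$, and $h \in H^1_0(\Omega)$ forces the tangential gradient of $h$ to vanish on $\partial \Omega$, so $\nu \times \nabla h = 0$ and hence $\nu \times u_0 = 0$. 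Thus $u_0 \in \xn(\di\,0,\Omega)$, which yields $\xn(\Omega) = \xn(\di\,0,\Omega) \oplus \nabla W$, where $W := \{g \in H^1_0(\Omega) : \Delta g \in L^2(\Omega)\}$.

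Orthogonality of this decomposition for both the $L^2$ inner product and the form $a$ is then verified by a short integration by parts: for $v \in \xn(\di\,0,\Omega)$ and $g \in W$, one has
\begin{equation*}
\int_\Omega v \cdot \nabla g \, dx = -\int_\Omega g\, \di v\, dx + \int_{\partial \Omega} g\,( v \cdot \nu)\, d\sigma = 0
\end{equation*}
using $\di v = 0$ and $g|_{\partial \Omega} = 0$, while $a(v, \nabla g) = 0$ since $\cu \nabla g = 0$ and $\di v = 0$. Consequently, the weak eigenvalue problem \eqref{mainpenweak} decouples into two independent subproblems on the two factors.

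On $\xn(\di\,0,\Omega)$ the penalty term drops out and the equation reduces to \eqref{mainweak}; conversely, a solution $u \in \xn(\di\,0,\Omega)$ of \eqref{mainweak} extends to a solution of \eqref{mainpenweak} because an arbitrary test function $\varphi \in \xn(\Omega)$ splits as $\varphi_0 + \nabla g$ and both cross contributions vanish by the orthogonality. On $\nabla W$, testing $u = \nabla f$ with $f \in W$ against $\nabla g$ for $g \in W$ gives
\begin{equation*}
\tau \int_\Omega \Delta f \, \Delta g \, dx = \lambda \int_\Omega \nabla f \cdot \nabla g \, dx = -\lambda \int_\Omega f \, \Delta g \, dx,
\end{equation*}
the last equality holding since $f \in H^1_0(\Omega)$. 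Therefore $\int_\Omega (\tau \Delta f + \lambda f)\Delta g\, dx = 0$ for all $g \in W$; since $\Delta : W \to L^2(\Omega)$ is surjective (by standard Dirichlet existence on Lipschitz domains) this forces $-\tau \Delta f = \lambda f$, i.e., $f$ solves \eqref{hel}. The converse is immediate from $\cu \nabla f = 0$ and $\di \nabla f = \Delta f = -(\lambda/\tau) f$.

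The statements of the lemma then fall out of the decomposition $u = u_0 + \nabla h$: if $\di u = 0$, then $h$ is the unique $H^1_0$-solution of $\Delta h = 0$ and hence vanishes, so $u$ reduces to its divergence-free part and solves \eqref{mainweak}; if $\di u \neq 0$, the gradient component $\nabla h$ is the pure gradient solution with $h$ solving \eqref{hel}; the eigenvalue set of \eqref{mainpen} is the union of the two spectra by the block-diagonal structure. The one step requiring real care is the orthogonality of the decomposition under the bilinear form $a$, as this is precisely what makes the two sub-problems decouple; the remaining steps reduce to standard computations and classical facts (Dirichlet solvability on Lipschitz domains and the boundary behaviour of tangential gradients of $H^1_0$ functions).
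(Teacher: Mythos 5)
Your argument is correct and is essentially the standard one: the paper itself offers no proof of this lemma, delegating it to Costabel--Dauge \cite{coda}, and the decomposition $\xn(\Omega)=\xn(\di\,0,\Omega)\oplus\nabla W$ with $W=\{g\in H^1_0(\Omega):\Delta g\in L^2(\Omega)\}$, orthogonal for both the $L^2$ product and the penalized form, is exactly the mechanism behind that reference and correctly yields the decoupling of \eqref{mainpenweak} into \eqref{mainweak} and the Dirichlet--Helmholtz problem \eqref{hel}. The only step I would justify differently is the membership $\nabla h\in H_0(\cu,\Omega)$: rather than arguing via the vanishing of the tangential gradient on $\partial\Omega$ (which on a merely Lipschitz boundary requires the $H^{-1/2}$ trace formalism), it is cleaner to take $h_n\in \mathcal{C}^\infty_c(\Omega)$ with $h_n\to h$ in $H^1(\Omega)$ and observe that $\nabla h_n\to\nabla h$ in $H(\cu,\Omega)$ since all curls vanish, so the claim follows directly from the definition of $H_0(\cu,\Omega)$ as a closure.
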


In view of the previous lemma,  in order to distinguish the solutions  arising from  the original Maxwell system from the spurious solutions associated with the Helmohltz 
equation,  we give the following definition. 

\begin{defn}\label{risonanza} We say that a couple $(u,\lambda )$ in  $ \xn(\Omega) \times \mathbb{R}$ is a (electric) Maxwell eigenpair if  $(u, \lambda)$ is an eigenpair 
of equation \eqref{mainpenweak} with $\di\, u =0 $ in $\Omega$, in which case $u$ is called a  (electric) Maxwell eigenvector and $\lambda $ a Maxwell eigenvalue. 
\end{defn}

\begin{rem}\label{risonanzarem}     In this paper, it will be understood that the value of $\tau $ in \eqref{mainpen}  is fixed. It is important to note that in applying our results one is free to choose $\tau >0$ in order to avoid 
the overlapping of Maxwell and Helmholtz eigenvalues.  In fact, 
  since the set of eigenvalues of problem \eqref{mainpen} are given by the union of the 
set of eigenvalues of problem \eqref{mainweak} and the set of eigenvalues of the Dirichlet Laplacian in $\Omega$ multiplied by $\tau $, one cannot exclude that
a Maxwell eigenvalue could coincide with an eigenvalue of the Dirichlet Laplacian multiplied by some $\tau \in ]0,\infty [$. However, if $\lambda $ is a fixed 
Maxwell eigenvalue it is possible to choose $\tau \in ]0, \infty[$ such that $\lambda \ne    \tau  \vartheta$  for all eigenvalues $\vartheta$ of the Dirichlet Laplacian, in other words one can choose $\tau $ in order to avoid `resonance'.
It is also useful to recall that the eigenvalues  of the Dirichlet Laplacian depend with continuity upon  sufficiently regular perturbations of   $\Omega$, as those considered in this paper (see e.g., \cite{lala}), hence  it is possible to avoid `resonance'  around a fixed Maxwell eigenvalue $\lambda (\Omega)$, possibly multiple,  and all those eigenvalues bifurcating from it when $\Omega$ is slightly perturbed. 
\end{rem}

We now describe a standard procedure that allows us to recast the eigenvalue problem \eqref{mainpenweak} as an eigenvalue problem for a compact self-adjoint 
operator in Hilbert space. We consider the operator $T$ from $\xn (\Omega )$ to its dual $(\xn (\Omega))'$ defined by the pairing
\begin{equation}\label{operatort}
<Tu, \varphi>= \int_{\Omega}\cu u \cdot \cu \varphi \, dx+\tau \int_{\Omega} \di u\,  \di \varphi \, dx  ,
\end{equation}
for all $u,\varphi \in \xn (\Omega )$. Then, we consider the map 
$J$ from $(L^2(\Omega ))^3$ to $(\xn (\Omega))'$ defined by the pairing
$$
<Ju, \varphi>=\int_{\Omega} u \cdot \varphi \, dx ,
$$
for all $u\in (L^2(\Omega ))^3$ and $\varphi  \in \xn (\Omega )$. 
  By the Riesz Theorem, the operator $T+J$ is a homeomorphism from $\xn (\Omega )$ to its dual. 

\begin{lem}  Let $\Omega$ be a bounded open set in $\R^3$. The operator $S$ from $ (L^2(\Omega ))^3$ to itself defined by
$$
Su=\iota \circ (T+J)^{-1}\circ J
$$
where $\iota$ denotes the embedding of $\xn (\Omega )$  into  $  (L^2(\Omega ))^3$ is a non-negative self-adjoint operator in  $ (L^2(\Omega ))^3$.  Moreover, 
 $\lambda $ is an eigenvalue of problem \eqref{mainpenweak} if and only if $\mu =(\lambda +1)^{-1}$ is an eigenvalue of 
the operator $S$, the eigenfunctions being the same.
\end{lem}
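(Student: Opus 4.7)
The plan is to exploit the symmetry and coercivity of the bilinear form
$$a(u,\varphi)=\int_\Omega \cu u\cdot\cu\varphi\,dx+\tau\int_\Omega\di u\,\di\varphi\,dx+\int_\Omega u\cdot\varphi\,dx$$
that defines $T+J$. Since $a$ is exactly the scalar product associated with the $X_N(\Omega)$-norm, it is symmetric, continuous, and coercive on $\xn(\Omega)$, so the Riesz representation (as already noted in the excerpt) gives that $T+J\colon \xn(\Omega)\to\xn(\Omega)'$ is a topological isomorphism. This turns $S=\iota\circ(T+J)^{-1}\circ J$ into a well-defined bounded operator on $(L^2(\Omega))^3$.

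For self-adjointness and non-negativity, I would unwind the definition: given $u\in(L^2(\Omega))^3$, $w:=(T+J)^{-1}Ju\in\xn(\Omega)$ is characterised by $a(w,\varphi)=\int_\Omega u\cdot\varphi\,dx$ for every $\varphi\in\xn(\Omega)$. Taking $v\in(L^2(\Omega))^3$ and setting $w':=(T+J)^{-1}Jv$, I would use $\varphi=w'$ in the identity for $w$ and $\varphi=w$ in the identity for $w'$; subtracting and using the symmetry of $a$ yields
$$(Su,v)_{L^2}=\int_\Omega w\cdot v\,dx=a(w',w)=a(w,w')=\int_\Omega u\cdot w'\,dx=(u,Sv)_{L^2}.$$
Taking $v=u$ in the same computation gives $(Su,u)_{L^2}=a(w,w)\ge 0$, which is the required non-negativity.

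For the spectral correspondence, starting from \eqref{mainpenweak} I would add $\int_\Omega u\cdot\varphi\,dx$ to both sides to rewrite the weak eigenvalue equation as $a(u,\varphi)=(\lambda+1)\int_\Omega u\cdot\varphi\,dx$ for all $\varphi\in\xn(\Omega)$; equivalently, $(T+J)u=(\lambda+1)Ju$ in $\xn(\Omega)'$, which upon inversion gives $u=(\lambda+1)Su$, i.e.\ $Su=\mu u$ with $\mu=(\lambda+1)^{-1}$ (and $\mu\ne 0$ since $\lambda>-1$ by non-negativity of $a$). Conversely, if $Su=\mu u$ with $u\in(L^2(\Omega))^3$, $u\ne0$, then $u=\mu^{-1}Su$ lies in the range of $\iota$, so $u\in\xn(\Omega)$, and rewriting $(T+J)^{-1}Ju=\mu u$ as $Ju=\mu(T+J)u$ in $\xn(\Omega)'$ and reversing the computation gives \eqref{mainpenweak} with $\lambda=\mu^{-1}-1$. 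This also forces $\mu\in(0,1]$, and in particular $\mu\ne 0$, so the correspondence is a bijection with identical eigenvectors.

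The argument has no real obstacle, but the one point that has to be handled carefully is the interplay between the three spaces $\xn(\Omega)$, $(L^2(\Omega))^3$, and $\xn(\Omega)'$: the identification $Ju=u$ and $\iota u=u$ is only consistent because $\iota$ is a continuous injection and $J$ factors through this embedding by construction. Keeping this chain explicit is what makes the equivalence $(T+J)u=(\lambda+1)Ju\Leftrightarrow Su=\mu u$ rigorous rather than formal, and it will also be essential later when we pull back the operator to the reference domain via the Piola transform.
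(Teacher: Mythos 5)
Your proof is correct, and it is exactly the standard Lax--Milgram/Riesz argument that the paper invokes without writing out (the lemma is stated there with no proof, as part of a ``standard procedure''); the symmetry computation $(Su,v)_{L^2}=a(w',w)=a(w,w')=(u,Sv)_{L^2}$, the non-negativity $(Su,u)_{L^2}=a(w,w)\ge 0$, and the equivalence $(T+J)u=(\lambda+1)Ju\Leftrightarrow Su=(\lambda+1)^{-1}u$ are all as intended. The only nitpick is that $a$ coincides with the $\xn(\Omega)$ inner product only for $\tau=1$; for general $\tau>0$ it is merely an equivalent inner product, which is all that the coercivity and symmetry arguments require.
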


 If the space $\xn (\Omega)$ is compactly embedded into  $ (L^2(\Omega ))^3$, that is, $\iota$ is a compact map, then the operator $S$ is compact, hence the spectrum $\sigma (S)$ of $S$ can be represented as $\sigma (S)=\{0\} \cup \{\mu_n (\Omega ) \}_{n\in \N} $ where 
$\mu_n(\Omega )$, $n\in \N$  is a decreasing sequence of positive eigenvalues of finite multiplicity,  which converges to zero.  
Accordingly, the eigenvalues of problem  \eqref{mainpenweak}  are given by the sequence $\lambda_n(\Omega ) $, $n\in \N$ defined by $
\lambda_n(\Omega )= \mu_n^{-1}(\Omega )-1
$.  As customary, we agree to repeat each eigenvalue in the sequence as many times as its multiplicity.   Thus, we have the following result where formula \eqref{minmax}
can be proved by applying the classical Min-Max Principle to the operator $S$. 

\begin{thm} Let $\Omega$ be a bounded open set such that the embedding $\xn (\Omega )\subset (L^2(\Omega))^3$ is compact.  The eigenvalues of problem 
 \eqref{mainpenweak} have finite multiplicity and  are given by a divergent sequence  $\lambda_n(\Omega )$, $n\in \N$ which can be represented by means of the following min-max formula:
 \begin{equation}\label{minmax}
\lambda_n(\Omega )=   \min_{ \substack{ V\subset \xn(\Omega )  \\ {\rm dim }V=n }  }\  \,  \max _{u\in V\setminus\{0\}  }
  \frac{  \int_{\Omega}  |\cu u|^2 + \tau   |\di u |^2  dx }{\int_{\Omega}   |u|^2\, dx}.
\end{equation}
\end{thm}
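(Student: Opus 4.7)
The plan is to combine the preceding lemma with the Courant--Fischer min--max characterization applied to an appropriate compact self-adjoint operator on a Hilbert space. First, since the embedding $\iota:\xn(\Omega)\hookrightarrow (L^2(\Omega))^3$ is compact by assumption, the operator $S=\iota\circ(T+J)^{-1}\circ J$ is compact, self-adjoint and non-negative on $(L^2(\Omega))^3$. By the spectral theorem its spectrum has the form $\{0\}\cup\{\mu_n(\Omega)\}_{n\in\N}$ with $\mu_n$ a decreasing null sequence of positive eigenvalues of finite multiplicity. The preceding lemma then identifies the eigenvalues of \eqref{mainpenweak} with $\lambda_n(\Omega)=\mu_n(\Omega)^{-1}-1$, yielding a divergent sequence of finite multiplicity, which is the first part of the theorem.

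For formula \eqref{minmax} my plan is to transfer the spectral problem to $\xn(\Omega)$ endowed with the equivalent inner product $(u,v)_{T+J}:=\langle (T+J)u,v\rangle$, and to introduce the operator $\hat S:\xn(\Omega)\to\xn(\Omega)$ defined by $\hat S v:=(T+J)^{-1}(Jv)$. A short direct computation from the definitions of $T$ and $J$ gives $(\hat S v,u)_{T+J}=\langle Jv,u\rangle=\int_{\Omega}u\cdot v\,dx$ for all $u,v\in\xn(\Omega)$, so $\hat S$ is symmetric and non-negative with respect to $(\cdot,\cdot)_{T+J}$; it is compact because the composition involves the compact embedding $\iota$; and its non-zero eigenvalues and corresponding eigenfunctions coincide with those of $S$, since $\hat S v=\mu v$ is equivalent to $Tv=(\mu^{-1}-1)Jv$.

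Applying Courant--Fischer to $\hat S$ on the Hilbert space $(\xn(\Omega),(\cdot,\cdot)_{T+J})$ would then give
\[
\mu_n(\Omega)=\max_{\substack{V\subset\xn(\Omega)\\ \dim V=n}}\min_{u\in V\setminus\{0\}}\frac{\int_{\Omega}|u|^2\,dx}{\int_{\Omega}\bigl(|u|^2+|\cu u|^2+\tau|\di u|^2\bigr)\,dx}.
\]
Since $t\mapsto t^{-1}-1$ is strictly decreasing on $(0,\infty)$, inverting the Rayleigh quotient exchanges max with min and min with max, and the $-1$ precisely cancels the $|u|^2$ contribution in the denominator, producing exactly \eqref{minmax} for $\lambda_n(\Omega)=\mu_n(\Omega)^{-1}-1$. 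The only genuine point of care in the scheme is the passage from the $L^2$-level min--max naturally associated with $S$ to the $\xn(\Omega)$-level min--max with the stated Rayleigh quotient $(\int|\cu u|^2+\tau|\di u|^2)/\int|u|^2$; introducing $\hat S$ on $\xn(\Omega)$ renders this step transparent, and everything else is a routine application of the spectral theorem for compact self-adjoint operators.
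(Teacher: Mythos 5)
Your argument is correct and is essentially the paper's own approach: the paper gives no detailed proof, merely noting that \eqref{minmax} follows from the classical Min-Max Principle applied to the compact self-adjoint operator $S$, and your passage to the form-domain operator $\hat S$ on $\xn(\Omega)$ with the inner product $(u,v)_{T+J}$ is the standard way to implement that remark so that the Rayleigh quotient comes out in the stated form. The identification $\hat S v=\mu v \Leftrightarrow Tv=(\mu^{-1}-1)Jv$ and the inversion of the quotient (which cancels the $\int_{\Omega}|u|^2\,dx$ term and swaps $\max$ with $\min$) are carried out correctly.
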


\section{Domain transplantation}
\label{transec}

Given a bounded domain  (i.e., a bounded connected open set)  $\Omega$ in $\R^3$, we consider  problem \eqref{mainpenweak} on a class of domains  $\f (\Omega)$ obtained as diffeomorphic
images of $\Omega$. 
 Namely, we consider the family of diffeomorphisms
 $$\mathcal{A}_\Omega = \left\{ {\Phi} \in \mathcal{C} ^{1,1}(\overline{\Omega}, \mathbb{R}^3) : {\Phi} \textnormal{ is injective, } \operatorname{det}\operatorname{D}{\Phi}(x) \neq 0 \ \forall x \in \overline{\Omega} \right\},$$ 
 where  $\mathcal{C}^{1,1}(\overline{\Omega}, \mathbb{R}^3)$ is the space of ${\mathcal{C}}^{1,1}$ functions from $\overline{ \Omega} $ to ${\mathbb{R}}^3$ endowed with 
 its standard norm defined  by  $\| \f \|_{{\mathcal{C}}^{1,1}} = \| \f \|_{\infty}+\| \nabla \f \|_{\infty} + | \nabla\f |_{0,1}$ for all $\f\in \mathcal{C}^{1,1}(\overline{\Omega}, \mathbb{R}^3)$, where 
 $| \cdot |_{0,1}$ denotes the Lipschitz seminorm.  We note that if $\f \in \mathcal{A}_\Omega $ then 
 $\f (\Omega )$ is a bounded domain in $\R^3$,  $\partial {\Phi}(\overline{\Omega}) = {\Phi}(\partial \Omega) = \partial {\Phi}(\Omega),$ and ${\Phi}(\Omega)$ is the interior of ${\Phi}(\overline{\Omega})$. The map ${\Phi}$ is a homeomorphism of $\overline{\Omega}$ onto $\overline{{\Phi}(\Omega)}$. Moreover, if $\Omega$ is of class ${\mathcal{C}}^{1,1}$ then $\Phi (\Omega)$ is also of class ${\mathcal{C}}^{1,1}$. Finally, we recall that if $\Omega$ is sufficiently regular, say of class $C^1$, then  
 $\mathcal{A}_\Omega$ is an open set in $\mathcal{C}^{1,1}(\overline{\Omega}, \mathbb{R}^3)$.  See \cite{lala2002} and \cite{lanza} for details. 

In order to study problem  \eqref{mainpenweak} on  $\f (\Omega)$, it is convenient to pull it back to  $\Omega$ by means of a change of variables. As is known,  in order   to transform the curl in  a natural way and   preserve  our boundary conditions, it is necessary to pull back  any vector field  $v$ defined on $\f (\Omega)$ to the vector field $u$ defined on $\Omega$ by means of the covariant Piola transform defined by 
\begin{equation}\label{pullback}
u(x)= \left((v \circ \f) \operatorname{D}\f \,\right)(x),\ \ {\rm for\ all }\  x\in \Omega . 
\end{equation}
By setting
$$
y=\f (x), \ \ {\rm for\ all }\ x\in \Omega ,
$$
equality \eqref{pullback} can be rewritten in the form
\begin{equation} v(y)= \left( u (\operatorname{D}\f)^{-1} \right) \circ \f^{(-1)}(y) = \left( u \circ \f^{(-1)} \right) \operatorname{D}(\f^{(-1)}) (y),    \ \ y\in \f (\Omega).
\end{equation}

Note that in the sequel we shall often use the following notation
 $$\partial_j u_i(x) = \frac{\partial u_i}{\partial x_j}(x) \quad \text{ and } \quad \partial'_a v_b(y) = \frac{\partial v_b}{\partial y_a}(y).$$

Then we have the following  known result, which can be found for example in  \cite[Corollary~3.58]{monk}. For the convenience of the reader, we include a proof (which  differs from that of \cite[Corollary~3.58]{monk}).  Note that the assumption $\Phi \in {\mathcal{C}}^{1,1}$ can be relaxed, but some care is required, see Remark~\ref{remc1}.

\begin{thm}[Change of variables for curl]
\label{changecurl}
Let $\Omega$ be a bounded domain in $\R^3$ and $\f\in \mathcal{A}_\Omega$. Then
a function $v$ belongs to $H(\cu, \f (\Omega ))$  ($H_0(\cu, \f (\Omega ))$, respectively),  if and only if the function $u$ defined by \eqref{pullback} belongs to $H(\cu, \Omega )$ ($H_0(\cu, \Omega )$, respectively), in which case 
\begin{equation}
(\operatorname{curl}_y v (y)) \circ \f = \frac{\operatorname{curl}_x u (x) \left(\operatorname{D} \f(x) \right)^{T}} {\operatorname{det} \left( \operatorname{D} \f(x) \right)} \, .
\label{changecurl0}
\end{equation}
\end{thm}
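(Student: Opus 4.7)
The plan is to first establish the pointwise identity \eqref{changecurl0} for smooth $v$, and then extend to $v\in H(\operatorname{curl},\Phi(\Omega))$ by a duality argument based on pushing forward test functions from $\Omega$ to $\Phi(\Omega)$ via the inverse Piola transform. The converse direction (starting from $u\in H(\operatorname{curl},\Omega)$ and producing $v$) will follow by applying the same argument to the $\mathcal{C}^{1,1}$-diffeomorphism $\Phi^{-1}\in\mathcal{A}_{\Phi(\Omega)}$.

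For $v\in\mathcal{C}^1(\overline{\Phi(\Omega)})^3$, the chain rule applied to $u_i(x)=v_b(\Phi(x))\,\partial_i\Phi_b(x)$ gives
$$\partial_j u_i(x)=\partial'_a v_b(\Phi(x))\,\partial_j\Phi_a(x)\,\partial_i\Phi_b(x)+v_b(\Phi(x))\,\partial_i\partial_j\Phi_b(x),$$
where the mixed partials $\partial_i\partial_j\Phi_b$ exist almost everywhere by Rademacher's theorem since $\Phi\in\mathcal{C}^{1,1}$. Contracting with $\epsilon_{kji}$, the second summand vanishes by the a.e.\ symmetry of $\partial_i\partial_j\Phi_b$ in $i,j$ combined with the antisymmetry of $\epsilon_{kji}$, while the first summand is evaluated through the algebraic cofactor identity
$$\epsilon_{kji}\,\partial_j\Phi_a\,\partial_i\Phi_b=(\det D\Phi)\,\epsilon_{abc}\,(D\Phi)^{-1}_{kc}.$$
Using $\epsilon_{abc}=\epsilon_{cab}$, this yields $(\operatorname{curl}_x u)_k=(\det D\Phi)\,(\operatorname{curl}_y v)_c(\Phi)\,(D\Phi)^{-1}_{kc}$, which is a componentwise rewriting of \eqref{changecurl0}. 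Crucially, only first derivatives of $\Phi$ survive in the final formula, matching the $\mathcal{C}^{1,1}$ hypothesis.

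For the general case $v\in H(\operatorname{curl},\Phi(\Omega))$, I set $u=(v\circ\Phi)D\Phi$ and introduce the candidate
$$\tilde w(x):=\bigl((\operatorname{curl}_y v)\circ\Phi\bigr)(x)\,(D\Phi(x))^{-T}\,\det D\Phi(x);$$
both $u$ and $\tilde w$ lie in $(L^2(\Omega))^3$ by the standard change of variables formula, since $D\Phi$, $D\Phi^{-1}$ and $\det D\Phi$ are bounded on $\overline{\Omega}$. To see that $\operatorname{curl}_x u=\tilde w$ in $\mathcal{D}'(\Omega)$, I would fix $\varphi\in\mathcal{C}_c^\infty(\Omega)^3$ and consider its Piola pushforward $\psi(y):=\varphi(\Phi^{-1}(y))\,D\Phi^{-1}(y)$, which is Lipschitz with compact support in $\Phi(\Omega)$ and therefore lies in $H^1_0(\Phi(\Omega))^3$, qualifying (after routine mollification) as an admissible test function for the distributional curl of $v$. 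The pointwise identity of step~1 applied a.e.\ to the pair $(\varphi,\psi)$, together with the change of variables $y=\Phi(x)$ and the elementary row-vector identity $(aD\Phi)\cdot(bD\Phi^{-T})=a\cdot b$, gives
$$\int_\Omega u\cdot\operatorname{curl}_x\varphi\,dx=\int_{\Phi(\Omega)} v\cdot\operatorname{curl}_y\psi\,dy=\int_{\Phi(\Omega)}\operatorname{curl}_y v\cdot\psi\,dy=\int_\Omega\tilde w\cdot\varphi\,dx,$$
so that $\operatorname{curl}_x u=\tilde w$ distributionally and $u\in H(\operatorname{curl},\Omega)$ with \eqref{changecurl0} holding.

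The $H_0(\operatorname{curl})$ statement follows by approximation: if $v_n\in\mathcal{C}_c^\infty(\Phi(\Omega))^3$ converges to $v\in H_0(\operatorname{curl},\Phi(\Omega))$ in $H(\operatorname{curl})$, the pullbacks $u_n=(v_n\circ\Phi)D\Phi$ are Lipschitz with compact support in $\Omega$, hence belong to $H^1_0(\Omega)^3\subset H_0(\operatorname{curl},\Omega)$, and the already-proved formula shows $u_n\to u$ in $H(\operatorname{curl},\Omega)$, forcing $u\in H_0(\operatorname{curl},\Omega)$. The main delicate point throughout is that the Piola pushforward $\psi$ is only Lipschitz rather than smooth, because $\Phi\in\mathcal{C}^{1,1}$ does not improve under inversion; this is precisely where the $\mathcal{C}^{1,1}$ regularity is exploited, and it is circumvented by the standard density of $\mathcal{C}_c^\infty$ in $H^1_0$ together with the embedding $H^1_0\hookrightarrow H_0(\operatorname{curl})$.
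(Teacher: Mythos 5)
Your proof is correct and follows essentially the same route as the paper's: the pointwise identity via the chain rule and the Levi-Civita/cofactor identity (with the second-derivative term annihilated by antisymmetry), then a duality argument pushing smooth test functions forward through the Piola transform (with mollification to handle the merely Lipschitz pushforward), and density for the $H_0$ statement. The only point worth tidying is the change of variables: since \eqref{changecurl0} involves $\operatorname{det}\operatorname{D}\f$ while the substitution produces $\abs{\operatorname{det}\operatorname{D}\f}$, the two outer equalities in your displayed chain each carry a factor $\operatorname{sgn}(\operatorname{det}\operatorname{D}\f)$ (constant on the connected set $\Omega$), which cancels in the end-to-end identity --- the paper tracks this sign explicitly.
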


{\bf Proof.}   Assume for the time being that 
$ u $ is a vector field of class ${\mathcal{C}}^{0,1}$.  
The chain rule yields 
\begin{equation*}
\begin{split}
 \frac{\partial v_b}{\partial y_a}(y)&= \frac{\partial [(u_i \circ \f^{(-1)}) \partial'_b \f_i^{(-1)}]}{\partial y_a}(y) \\
    &= \frac{\partial u_i}{\partial x_j}(\f^{(-1)}(y)) \, \frac{\partial \f_j^{(-1)}}{\partial y_a}(y) \, \frac{\partial \f_i^{(-1)}}{\partial y_b}(y) + u_i(\f^{(-1)}(y)) \frac{\partial^2 \f_i^{(-1)}}{\partial y_a \partial y_b}(y).
\end{split}
\end{equation*}
Note that  summation symbols are omitted here and in the sequel.
Recall that the $c$-component of the curl of $v$ is given by
$$\left[ \operatorname{curl}_y v(y) \right]_c = \partial'_a v_b (y) \, \xi_{abc} \, ,$$
where $\xi_{abc}$ is the Levi-Civita symbol defined by
\[
    \xi_{abc}=\left\{
                \begin{array}{ll}
                  +1 & \qquad \text{if } (a,b,c) \text{ is an even permutation of }(1,2,3),\vspace{1mm}\\
                  -1 & \qquad \text{if } (a,b,c) \text{ is an odd permutation of }(1,2,3),\vspace{1mm}\\
                   \ \ 0 & \qquad \text{if } a=b, \text{ or } b=c, \text{ or } a=c.
                 \end{array}
              \right.
\]
Then 
\begin{equation*}
\begin{split}
\left[ \operatorname{curl}_y v(y) \right]_c &= \partial_j u_i(\f^{(-1)}(y)) \, \partial'_a \f_j^{(-1)}(y) \, \partial'_b \f_i^{(-1)}(y) \, \xi_{abc}\\
& \qquad + u_i(\f^{(-1)}(y)) \partial'_a \partial'_b \f_i^{(-1)}(y) \, \xi_{abc}.
\end{split}
\end{equation*}
Since $\xi_{abc}=-\xi_{bac}  $ we have that   for all $i=1,2,3$
$$\sum_{a,b=1}^3 \partial'_a \partial'_b \f_i^{(-1)} (y) \, \xi_{abc} = 0.$$
Thus
\begin{equation*}
\begin{split}
\left[ \operatorname{curl}_y v (y) \left( \operatorname{D} \f^{(-1)}(y)\right)^T \right]_k &= \left[\operatorname{curl}_y v(y) \right]_c \frac{\partial \f_k^{(-1)}}{\partial y_c}(y) \\
&= \partial_j u_i(\f^{(-1)}(y)) \, \partial'_a \f_j^{(-1)}(y) \, \partial'_b \f_i^{(-1)}(y) \, \partial'_c \f_k^{(-1)}(y) \, \xi_{abc}\\
&= \partial_j u_i (\f^{(-1)}(y)) \, \xi_{jik} \operatorname{det}\left( \operatorname{D}\f^{(-1)}(y)\right) \\
&= \left( \operatorname{curl}_x u (\f^{(-1)}) \right)_k \operatorname{det}\left( \operatorname{D}\f^{(-1)}(y)\right),
\end{split}
\end{equation*}
where we have used the fact that  $\partial'_a F_j \, \partial'_b F_i \, \partial'_c F_k \, \xi_{abc}= \xi_{jik} \operatorname{det}\left( \operatorname{D}F \right)$, for any vector field $F$ of class $\mathcal{C}^1$.  

Therefore $$\operatorname{curl}_y v (y) = \operatorname{curl}_x u (\f^{(-1)}(y)) \left(\operatorname{D} \f^{(-1)}(y) \right)^{-T} \operatorname{det}\left( \operatorname{D}\f^{(-1)}(y)\right),$$
and formula \eqref{changecurl0} follows.

We now prove the validity of formula \eqref{changecurl0}  in the weak sense. 
We begin with proving that  if  $v\in H(\cu, \f (\Omega ))$ then the  distributional curl of the function  $u$ defined above  belongs to $L^2(\Omega )$ and satisfies formula
\eqref{changecurl0}. To do so, it suffices to prove that for 
\begin{equation}\label{changecurl1}
\int_{\Omega}  u\, (\cu \varphi )^T dx =\int_{\Omega} (\operatorname{curl}_y v (y)) ( \f (x))    \left(\operatorname{D} \f(x) \right)^{-T} \operatorname{det}  \operatorname{D} \f(x) 
\varphi^T (x) \, dx  ,
\end{equation}
for all $\varphi \in {\mathcal{C}}^{\infty }_c(\Omega)$.
Following formula \eqref{pullback}, we define a function $\psi $ on $\Phi (\Omega)$ by setting  
\begin{equation}\label{changecurl15}
\varphi (x)= \left((\psi  \circ \f) \operatorname{D}\f \,\right)(x)\, .
\end{equation}
By formula  \eqref{changecurl0} we get
$$
\cu_x \varphi (x) = 
(\operatorname{curl}_y \psi (y)) ( \f (x))    \left(\operatorname{D} \f(x) \right)^{-T} \operatorname{det}  \operatorname{D} \f(x)
$$
and this implies that 
\begin{eqnarray*}\lefteqn{
\int_{\Omega}  u\, (\cu \varphi )^Tdx   =\int_{\Omega} u  \left(\operatorname{D} \f(x) \right)^{-1}  (\operatorname{curl}_y \psi (y))^T ( \f (x))   \operatorname{det}  \operatorname{D} \f(x) dx}\nonumber  \\
& & =\int_{\Omega } v(\Phi (x))(\operatorname{curl}_y \psi (y))^T ( \f (x))   \operatorname{det}  \operatorname{D} \f(x) dx \\
& & =
\int_{\f (\Omega )} v(y)(\operatorname{curl}_y \psi (y))^T {\rm sgn}( \operatorname{det}  \operatorname{D}\Phi^{(-1)} (y)   )  dy    \nonumber \\
& & =\int_{\f (\Omega )} (\operatorname{curl}_y v(y)) \psi^T   {\rm sgn}( \operatorname{det}  \operatorname{D}\Phi^{(-1)} (y)   ) dy\\
& & = \int_{\Omega} (\operatorname{curl}_y v (y)) ( \f (x))    \left(\operatorname{D} \f(x) \right)^{-T} \operatorname{det}  \operatorname{D} \f(x) 
\varphi^T (x) dx,
\end{eqnarray*}
as required.  In the same way, one can prove that   if  $u\in H(\cu, \Omega )$ then the  distributional curl of the function  $v$  belongs to $L^2(\f(\Omega ))$, which completes the first part of the proof. 

In order to prove that $v$ belongs to $H_0(\cu, \f (\Omega ))$  if and only $u$  belongs to $H_0(\cu, \Omega )$ one can  directly use formula \eqref{changecurl0} and the  definition of the these spaces. 
\hfill $\Box$\\

\begin{rem}Theorem~\ref{changecurl} holds also under weaker assumptions on $\Phi$. Namely, assume that  $\f\in  \mathcal{C}^{1}(\overline{\Omega}, \mathbb{R}^3) $ is injective and that  $\operatorname{det}\operatorname{D}{\Phi}(x) \neq 0 \ \forall x \in \overline{\Omega} $.  Then the thesis of Theorem~~\ref{changecurl} holds. 
Indeed, given any smooth domain $U$ with $\overline{U}\subset  \Omega$, 
one can find  an approximating sequence $\Phi_n\in {\mathcal{A}}_{U}$, $n\in \mathbb{N}$ which converges 
to $\Phi$ in $\mathcal{C}^{1}(\overline{U}, \mathbb{R}^3) $. This can be done by using standard 
mollifiers. Then, since  the set of functions $ \mathcal{C}^{1}(\overline{U}, \mathbb{R}^3)  $ which are injective and such that $\operatorname{det}\operatorname{D}{\Phi}(x) \neq 0 \ \forall x \in \overline{U} $, is an open set in  $\mathcal{C}^{1}(\overline{U}, \mathbb{R}^3) $ (cfr., \cite[Lemma~5.2]{lanza}), it follows that $\f_n\in {\mathcal{A}}_{U} $ for all $n$ sufficiently large, hence Theorem~\ref{changecurl} is applicable to $\f_n$.  Passing to the  limit as $n\to \infty $ we get the validity of formula \eqref{changecurl0} in $U$, and since $U$ is arbitrary, formula  \eqref{changecurl0} holds also in the whole of $\Omega$. The preservation of the spaces easily follows by formula \eqref{changecurl0} itself and changing variables in integrarls.
\end{rem}

\begin{rem}\label{remc1} The fact that  $v$ belongs to $H_0(\cu, \f (\Omega ))$  if and only if the function $u$ defined by \eqref{pullback} belongs to $H_0(\cu, \Omega )$ as stated in 
Theorem~\ref{changecurl} has a immediate explanation by using traces in the classical sense as follows. 
It is not difficult to realise that the unit outer normals 
 to $\partial\Omega$ and $\partial\f(\Omega)$ satisfy the relation 
$$\nu_{ \partial\f(\Omega)} \circ \f =\pm\,  \frac{\nu_{\partial \Omega} (\operatorname{D}\f)^{-1}}{\abs{\nu_{ \partial \Omega} (\operatorname{D}\f)^{-1}}}.$$
\noindent
Then, using the fact that $aM \times bM = \operatorname{det(M) (a \times b)} (M)^{-1} $ for all vectors $a,b \in \mathbb{R}^3$ and for all invertible matrices $M \in GL_3(\mathbb{R})$, we immediately deduce that 
\begin{equation}\label{preservation }   v \times \nu_{\scriptscriptstyle \partial \f(\Omega)}=0 \text{ on } \partial \f(\Omega)\ \ {\rm if\ and\ only\ if}\ \  u \times \nu_{\scriptscriptstyle \partial \Omega}=0 \text{ on } \partial \Omega ,
\end{equation}
for all vector fields admitting boundary values in the classical sense. 
\end{rem}

In order to transplant  problem \eqref{mainpenweak} from $\f (\Omega)$ to $\Omega$ we also need a formula for the  transformation of the  divergence under the action of the  pull-back operator defined in \eqref{pullback}. 

\begin{thm}
\label{changediv}
Let $\Omega$ be a bounded domain in $\R^3$ and $\f\in \mathcal{A}_\Omega$. Then  
a function $v$ belongs to $H^1( \f (\Omega ))$   if and only if the function $u$ defined by \eqref{pullback} belongs to $H^1(\Omega )$, in which case 
 \begin{equation}
 \label{changediv0}
 (\displaystyle \operatorname{div}_y v) \circ \f (x)  = \frac{\operatorname{div}_x \left[ u (x) (\operatorname{D}\f(x))^{-1} (\operatorname{D}\f(x))^{-T} \operatorname{det}(\operatorname{D} \f(x)) \right]}{\operatorname{det} (\operatorname{D} \f(x))}.
 \end{equation}
\end{thm}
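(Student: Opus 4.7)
My plan is to establish \eqref{changediv0} in the weak (distributional) sense by pairing both sides against a smooth test function and using only the chain rule together with the ordinary change of variables, so that pointwise derivatives of $D\Phi$ never need to be taken.

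\textbf{Equivalence of $H^1$-regularity.} Since $\Phi\in \mathcal{A}_\Omega$ is a $\mathcal{C}^{1,1}$ bi-Lipschitz diffeomorphism from $\overline{\Omega}$ onto $\overline{\Phi(\Omega)}$, the composition $w\mapsto w\circ \Phi$ is an isomorphism of $H^1(\Phi(\Omega))$ onto $H^1(\Omega)$, and pointwise multiplication by the Lipschitz matrix fields $D\Phi$ and $(D\Phi)^{-1}$ preserves $(H^1)^3$. Thus $v\in (H^1(\Phi(\Omega)))^3$ if and only if $u=(v\circ\Phi)D\Phi\in (H^1(\Omega))^3$.

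\textbf{The formula by duality.} Fix $\varphi\in \mathcal{C}^\infty_c(\Omega)$ and set $\psi:=\varphi\circ\Phi^{-1}$, which is Lipschitz with compact support in $\Phi(\Omega)$, so in particular $\psi\in H^1_0(\Phi(\Omega))$. Integrating by parts on $\Phi(\Omega)$ and changing variables $y=\Phi(x)$,
\[
\int_{\Phi(\Omega)}(\operatorname{div}_y v)\psi\, dy \;=\; -\int_{\Phi(\Omega)} v\cdot \nabla_y\psi\, dy \;=\; -\int_\Omega v(\Phi(x))\cdot (\nabla_y\psi)(\Phi(x))\,\abs{\det D\Phi(x)}\, dx.
\]
The chain rule gives $(\nabla_y\psi)\circ\Phi = \nabla\varphi\,(D\Phi)^{-1}$, and inserting $v\circ\Phi = u(D\Phi)^{-1}$ (row vectors) one finds
\[
(v\circ\Phi)\cdot \bigl((\nabla_y\psi)\circ\Phi\bigr)\;=\; u\,(D\Phi)^{-1}(D\Phi)^{-T}(\nabla\varphi)^T \;=\; \bigl[u\,(D\Phi)^{-1}(D\Phi)^{-T}\bigr]\cdot \nabla\varphi.
\]
Since $u\in (H^1(\Omega))^3$ and $(D\Phi)^{\pm 1}$ is Lipschitz, the field $W:=u\,(D\Phi)^{-1}(D\Phi)^{-T}\abs{\det D\Phi}$ lies in $(H^1(\Omega))^3$, and a further integration by parts in $\Omega$ (with $\varphi$ compactly supported) converts the right-hand side into $\int_\Omega (\operatorname{div}_x W)\,\varphi\, dx$. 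On the other hand, applying the change of variables $y=\Phi(x)$ directly to the original left-hand side yields
\[
\int_{\Phi(\Omega)}(\operatorname{div}_y v)\psi\, dy \;=\; \int_\Omega \bigl((\operatorname{div}_y v)\circ\Phi\bigr)\varphi\,\abs{\det D\Phi}\, dx.
\]
Equating these two expressions for arbitrary $\varphi$ produces \eqref{changediv0}, after noting that $\det D\Phi$ has constant sign on the connected set $\Omega$, so that $\abs{\det D\Phi}$ and $\det D\Phi$ differ only by an overall sign that cancels between the two sides of the quotient.

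\textbf{Main obstacle.} The principal subtlety is the low regularity $\Phi\in \mathcal{C}^{1,1}$: the matrix $(D\Phi)^{-1}(D\Phi)^{-T}\abs{\det D\Phi}$ is only Lipschitz, so a naive pointwise derivation would involve second derivatives of $\Phi$ that exist only almost everywhere. The duality argument above sidesteps this entirely, using only that $H^1$ is preserved by multiplication with Lipschitz functions. As an alternative, mimicking the proof of Theorem~\ref{changecurl}, one could establish the identity first for $\Phi\in \mathcal{C}^2$ and $v\in (\mathcal{C}^1)^3$ by a componentwise calculation in which the classical Piola identity $\partial_j\bigl[\det(D\Phi)\,((D\Phi)^{-1})_{ji}\bigr]=0$ kills all second-order terms in $\Phi$, and then pass to the $\mathcal{C}^{1,1}$ setting by the same mollification argument already used in Remark~\ref{remc1}.
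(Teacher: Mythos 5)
Your proposal is correct, and it proves \eqref{changediv0} by a genuinely different route than the paper. The paper's proof is a direct pointwise computation: it writes $\operatorname{div}_y v=\partial'_a\big[(u_j\circ\Phi^{(-1)})M_{j,a}\big]$ with $M=\operatorname{D}\Phi^{(-1)}$, inserts $M_{j,a}=(MM^TM^{-T})_{j,a}$ to split the integrand into a product $P\,Q$, and then kills the term $\sum_a\partial'_aQ$ by the Piola identity $\partial'_a\big[(M^{-1})_{a,k}\det(\operatorname{D}\Phi^{(-1)})\big]=0$, proved via the Levi--Civita representation of the cofactor matrix. That computation involves second derivatives of $\Phi^{(-1)}$, which for $\Phi\in\mathcal{C}^{1,1}$ exist only almost everywhere; your duality argument sidesteps this entirely by pairing against $\varphi\in\mathcal{C}^\infty_c(\Omega)$, integrating by parts on both domains, and using only the chain rule, the $L^1$ change of variables, and the fact that multiplication by $W^{1,\infty}$ matrix fields preserves $H^1$. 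What you lose is the explicit algebraic mechanism (the divergence-free rows of the cofactor matrix) that the paper's computation exposes and that is reused in spirit elsewhere; what you gain is a shorter argument that is valid at the stated $\mathcal{C}^{1,1}$ regularity without any mollification, yielding the identity as an equality of $L^2$ functions, which is all the theorem asserts. Your handling of the sign, replacing $\abs{\det\operatorname{D}\Phi}$ by $\det\operatorname{D}\Phi$ using constancy of the sign on the connected set $\Omega$, is also correct and is a point the paper glosses over.
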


{\bf Proof.} The first part of the statement is standard and can be carried out by using the chain rule and changing variables in integrals. The proof of formula \eqref{changediv0} is more involved. To simplify notation, we set  $M= \operatorname{D} \f^{(-1)}$, so that $\partial'_a = M_{i,a} \partial_i$, where $\displaystyle M_{i,a}=\partial \f_i^{(-1)} /  \partial  y_a$.  Note that $M_{j,a} = \sum_{m,k=1}^3 M_{j,m} M_{k,m} (M^{-1})_{a,k}$ simply because 
$M_{j,a} = ( M M^T M^{-T} )_{j,a}$.
Since $v_a = (u_j \circ \f^{(-1)} ) \, \partial'_a \f_j^{(-1)}= (u_j \circ \f^{(-1)} ) M_{j,a}$ we have that
\begin{equation*}
\begin{split}
\operatorname{div}_y v &= \partial'_a v_a =   \partial'_a [ \left(u_j \circ \f^{(-1)}\right) M_{j,a} ] \\
&=\partial'_a [ ( u_j \circ \f^{(-1)} ) M_{j,m} M_{k,m} (M^{-1})_{a,k} ] \\
&=\partial'_a [ ( u_j \circ \f^{(-1)} ) (\partial'_m \f_j^{(-1)})(\partial'_m \f_k^{(-1)}) ( (\partial_k \f_a) \circ \f^{(-1)} ) ]\\
&= \partial'_a [P \,  Q ]= (\partial'_a P) Q + P (\partial'_a Q)
\end{split}
\end{equation*}
 where  we have set $P=P(k) = \sum_{j,m=1}^3 \displaystyle   ( u_j \circ \f^{(-1)} ) (\partial'_m \f_j^{(-1)})(\partial'_m \f_k^{(-1)})    \operatorname{det}^{-1}( \operatorname{D} \f^{(-1)} )       $ and 
$Q=Q(k,a) $ $ = ( (\partial_k \f_a) \circ \f^{(-1)} ) \operatorname{det}( \operatorname{D} \f^{(-1)} ).$
We claim that $\sum_{a=1}^3 \partial'_a Q = 0$. Indeed, if by $C$ we denote the cofactor matrix of $M$, we have that (see \cite{mccon}, p.12)
$$C_{k,a} = \frac{1}{2}\sum_{n,m,i,j=1}^3 \xi_{anm} \, \xi_{kij} M_{i,n} M_{j,m} \,,$$
hence 
\begin{equation*}
\begin{split}
\partial'_a \left( (M^{-1})_{a,k} \operatorname{det}(\operatorname{D}\f^{(-1)}   ) \right) &= \partial'_a (C_{k,a}) = \frac{1}{2} \partial'_a (\xi_{anm} \, \xi_{kij} M_{i,n} M_{j,m}) \\
&= \frac{1}{2} \xi_{kij} M_{j,m} \left( \partial'_a M_{i,n} \right) \xi_{anm} + \xi_{anm} M_{i,n} \left( \partial'_a M_{j,m} \right) \xi_{kij}.
\end{split}
\end{equation*}
Moreover
\begin{equation*}
\begin{split}
\sum_{a,n=1}^3 \xi_{anm} \, \partial'_a M_{i,n} &= \sum_{a,n=1}^3 \xi_{anm} \, \partial'_n M_{i,a} = \sum_{a,n=1}^3 \xi_{nam} \, \partial'_a M_{i,n} 
= - \sum_{a,n=1}^3 \xi_{anm} \, \partial'_a M_{i,n}.
\end{split}
\end{equation*}
Thus $\sum_{a,n=1}^3 \xi_{anm} \, \partial'_a M_{i,n} = 0$.
Similarly $\sum_{a,m=1}^3 \xi_{anm} \, \partial'_a M_{j,m} =0 $ and the claim is proved. 
Then 
\begin{equation*}
\begin{split}
\operatorname{div}_y v & = (\partial'_a P)Q = \operatorname{det} \left( \operatorname{D} \f^{(-1)} \right) (M)^{-1}_{a,k} \partial'_a P  \\ 
& = \operatorname{det} \left( \operatorname{D} \f^{(-1)} \right) (M)^{-1}_{a,k} M_{i,a}  [   \partial_i( P\circ\Phi )]\circ \Phi^{(-1)}  \\
&  = \operatorname{det} \left( \operatorname{D} \f^{(-1)} \right) \delta_{i,k}   [   \partial_i( P\circ\Phi )]\circ \Phi^{(-1)}  \\
& = \operatorname{det} \left( \operatorname{D} \f^{(-1)} \right) \partial_i \left[ \frac{\left( u_j \circ \f^{(-1)} \right) (\partial'_m \f_j^{(-1)})(\partial'_m \f_i^{(-1)})}{\operatorname{det}\left( \operatorname{D} \f^{(-1)} \right)} \circ \Phi    \right]\circ\Phi^{(-1)}  \\
&= \left[    \frac{\partial_i \left[ u_j \left( (\operatorname{D}\f)^{-1} (\operatorname{D}\f)^{-T} \right)_{j,i} \operatorname{det}(\operatorname{D} \f) \right]}{\operatorname{det} (\operatorname{D} \f)}              \right] \circ \f^{(-1)} \\
&= \left[ \frac{\partial_i \left[ \left( u (\operatorname{D}\f)^{-1} (\operatorname{D}\f)^{-T} \right)_i \operatorname{det}(\operatorname{D} \f) \right]}{\operatorname{det} (\operatorname{D} \f)} \right] \circ \f^{(-1)} \\
&= \frac{\operatorname{div}_x \left[ u (\operatorname{D}\f)^{-1} (\operatorname{D}\f)^{-T} \operatorname{det}(\operatorname{D} \f) \right]}{\operatorname{det} (\operatorname{D} \f)} \circ \f^{(-1)},\\
\end{split}
\end{equation*}
hence formula \eqref{changediv0} is proved. \hfill $\Box$\\

We consider equation \eqref{mainpenweak} on $\f (\Omega)$ that is 

\begin{equation}
\label{mainpenweaphi}
\int_{\f (\Omega )}\cu v \cdot \cu \psi \, dy+\tau \int_{\f (\Omega )} \di v\,  \di \psi \,dx=\lambda \int_{\f (\Omega )} v \cdot \psi \, dx, \ \ {\rm for\ all}\ \psi \in \xn ( \f (\Omega )),
\end{equation}
in the unknowns $v\in \xn (\f (\Omega))$ and $\lambda \in \R$.  
If $u$ is the function defined in $\Omega$ by formula \eqref{pullback} and, analogously, $\varphi $ is the function defined by \eqref{changecurl15}, by changing variables in \eqref{mainpenweaphi} we get

\begin{align}\label{mainback}
    & \int_\Omega   \frac{  \operatorname{curl}u \,    (\operatorname{D}\f)^T \operatorname{D}\f      (\operatorname{curl}\varphi)^T } {\abs{\operatorname{det}(\operatorname{D}\f)}}     dx\nonumber \\
    & \quad + \tau \int_\Omega \frac{\operatorname{div}_x \left( u (\operatorname{D}\f)^{-1} (\operatorname{D}\f)^{-T} \operatorname{det}(\operatorname{D}\f)  \right) \operatorname{div}_x \left(\varphi (\operatorname{D}\f)^{-1} (\operatorname{D}\f)^{-T} \operatorname{det}(\operatorname{D}\f)  \right)}{\abs{\operatorname{det}(\operatorname{D}\f)}} \, dx\nonumber \\
    &\qquad\qquad\qquad\qquad \qquad\qquad\qquad\qquad\qquad\qquad=\lambda \int_\Omega u (\operatorname{D}\f)^{-1} (\operatorname{D} \f)^{-T} \varphi^T \abs{\operatorname{det}\operatorname{D}\f} dx\, .
\end{align}

Thus, instead of studying problem \eqref{mainpenweaphi} in the varying domain $\f (\Omega)$, we can study problem \eqref{mainback}  where  the unknown $u\in \xn (\Omega )$  is defined on the fixed domain $\Omega$ and the test functions $\varphi$ have to be taken in the fixed space  $\xn (\Omega )$ as well. Recall that under our regularity assumptions on $\Omega$, the space $\xn (\Omega )$ is contained in $(H^1(\Omega ))^3$. 

It is clear that the natural $L^2$-space for problem \eqref{mainback}   is the usual $L^2$-space endowed with
the scalar product  $ \langle \cdot, \cdot \rangle_\f $ defined by 
\begin{equation}\label{scalar}
 \langle u , \varphi \rangle_\f = \int_\Omega u (\operatorname{D}\f)^{-1} (\operatorname{D} \f)^{-T} \varphi^T \abs{\operatorname{det}\operatorname{D}\f} dx\, ,
\end{equation}
for all $u,\varphi \in (L^2 (\Omega))^3 $, which is equivalent to the usual one.
We denote by $L^2_\f(\Omega)$ the space $(L^2 (\Omega))^3$ endowed with  scalar product \eqref{scalar}.   As we have done for equation \eqref{mainpenweak} we recast 
 problem \eqref{mainback} as a problem for a compact self-adjoint operator. To do so, we consider the operator $T_{\f}$ from  the space $\xn (\Omega )$ to its dual by 
setting  $\langle T_{\Phi}u, \varphi \rangle $ equal to the left-hand side of equation \eqref{mainback}.  In the same way, we define the operator $J_{\Phi}$ from 
$L^2_{\Phi}(\Omega) $ to the dual of $\xn (\Omega )$ by setting $\langle J_{\Phi}u, \varphi \rangle $ equal to the right-hand side of equation \eqref{mainback} divided by $\lambda$.

\begin{lem}  Let $\Omega$ be a bounded domain in $\R^3$. The operator $S_{\Phi}$ from $ L^2_{\Phi}(\Omega )$ to itself defined by
$$
S_{\Phi}u=\iota \circ (T_{\Phi} +J_{\Phi} )  ^{-1}\circ J_{\Phi}
$$
where $\iota$ denotes the embedding of $\xn (\Omega )$  into  $  L^2_{\Phi}(\Omega )$, is a non-negative self-adjoint operator in  $L^2_{\Phi}(\Omega )$.  Moreover, 
 $\lambda $ is an eigenvalue of problem \eqref{mainback} if and only if $\mu =(\lambda +1)^{-1}$ is an eigenvalue of 
the operator $S_{\Phi}$, the eigenfunctions being the same.
\end{lem}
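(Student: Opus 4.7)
The plan is to mirror the Hilbert-space setup of the analogous statement for $S$, with the bookkeeping forced by the $\Phi$-dependent scalar product and the $\Phi$-dependent bilinear form. The three ingredients are Lax--Milgram, symmetry of the pulled-back form, and a trivial algebraic bijection between the two eigenvalue equations.

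First, I would show that $T_\Phi+J_\Phi:\xn(\Omega)\to\xn(\Omega)'$ is a topological isomorphism. By Theorems~\ref{changecurl} and~\ref{changediv}, the bilinear form defining $T_\Phi+J_\Phi$ is the pull-back via the Piola transform~\eqref{pullback} of $\int_{\Phi(\Omega)}(\cu v\cdot\cu\psi+\tau\,\di v\,\di\psi+v\cdot\psi)\,dy$. Evaluated on the diagonal it is equivalent to $\|v\|^2_{\xn(\Phi(\Omega))}$ and hence to $\|u\|^2_{\xn(\Omega)}$, because $\Phi\in\mathcal{A}_\Omega$ supplies uniform bounds on $\operatorname{D}\Phi$, $(\operatorname{D}\Phi)^{-1}$ and a strictly positive lower bound on $|\det\operatorname{D}\Phi|$ on the compact set $\overline\Omega$; continuity of the form is immediate from the same bounds. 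Lax--Milgram then gives the isomorphism, and since $J_\Phi$ is obviously bounded from $L^2_\Phi(\Omega)$ to $\xn(\Omega)'$, the operator $S_\Phi$ is a well-defined bounded operator on $L^2_\Phi(\Omega)$.

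For self-adjointness and non-negativity, set $f=(T_\Phi+J_\Phi)^{-1}J_\Phi u$ and $g=(T_\Phi+J_\Phi)^{-1}J_\Phi w$. The defining identity $\langle J_\Phi u,\varphi\rangle=\langle u,\varphi\rangle_\Phi$ for all $u\in L^2_\Phi(\Omega)$ and $\varphi\in\xn(\Omega)$, combined with the manifest symmetry of the form underlying $T_\Phi+J_\Phi$, yields
\[
\langle S_\Phi u,w\rangle_\Phi = \langle f,w\rangle_\Phi = \langle J_\Phi w,f\rangle = \langle (T_\Phi+J_\Phi)g,f\rangle = \langle (T_\Phi+J_\Phi)f,g\rangle = \langle u,S_\Phi w\rangle_\Phi,
\]
while $\langle S_\Phi u,u\rangle_\Phi = \langle (T_\Phi+J_\Phi)f,f\rangle \ge 0$ because the pulled-back form is a sum of squares. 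The spectral correspondence is then purely algebraic: \eqref{mainback} reads $T_\Phi u = \lambda J_\Phi u$, equivalently $(T_\Phi+J_\Phi)u=(\lambda+1)J_\Phi u$, equivalently $u=(\lambda+1)S_\Phi u$, and the same eigenvector $u$ works in both directions with $\mu=(\lambda+1)^{-1}$.

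The only step requiring genuine attention is the norm-equivalence used in Lax--Milgram, since the matrix factors $(\operatorname{D}\Phi)^{-1}(\operatorname{D}\Phi)^{-T}|\det\operatorname{D}\Phi|$ appearing in~\eqref{mainback} must be shown to produce two-sided bounds on $\xn(\Omega)$. This is routine once the $\mathcal{C}^{1,1}$ regularity and nondegeneracy of $\operatorname{D}\Phi$ are invoked, and it is the only place where the hypothesis $\Phi\in\mathcal{A}_\Omega$ is actively used; everything else in the argument is formal.
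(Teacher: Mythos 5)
Your argument is correct, and it supplies exactly the standard proof that the paper omits: the authors state this lemma without proof, having only remarked (for the unperturbed analogue) that $T+J$ is a homeomorphism by the Riesz theorem, which is the same coercivity-plus-symmetry argument you spell out via Lax--Milgram and the norm equivalence induced by the Piola transform. The only micro-point worth making explicit in the converse direction of the spectral correspondence is that $\mu=(\lambda+1)^{-1}\neq 0$, so that an eigenvector of $S_\Phi$ satisfies $u=\mu^{-1}S_\Phi u\in \xn(\Omega)$ and is therefore an admissible solution of \eqref{mainback}; otherwise nothing is missing.
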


Clearly, if the space $\xn (\Omega)$ is compactly embedded into  $ (L^2(\Omega ))^3$ then the operator $S_{\Phi}$ is compact and its  spectrum is given by  $\sigma (S_{\Phi})=\{0\} \cup \{\mu_n(\f )\}_{n\in \N} $ where 
$$
 \mu_{n}[  \Phi ]=(\lambda_n[\Phi ] +1)^{-1}
$$
and  $\lambda_n[\Phi ]:=\lambda_n(\Phi (\Omega ))$ are the eigenvalues of problem \eqref{mainpenweaphi}.

\section{Analyticity results and Hadamard-type formulas}
\label{analitic}

Given a finite set of indices $F \subset \mathbb{N}$, we consider
$$\mathcal{A}_\Omega[F] \colon = \set{\f \in \mathcal{A}_\Omega : \lambda_j[\f] \neq \lambda_l[\f],\ \forall j \in F, l \in \mathbb{N}\setminus F}$$
and the  elementary symmetric functions of the corresponding eigenvalues
\begin{equation}
\Lambda_{F,s}[\f] = \ \sum_{\substack{j_1,\dots, j_s \in F\\j_1<\dots<j_s}} \lambda_{j_1}[\f] \cdot \cdot \cdot \lambda_{j_s}[\f], \qquad s=1,\dots,\abs{F}.
\end{equation}
It is also convenient to consider 
\begin{equation}
    \hat{\Lambda}_{F,s}[\f]=\sum_{\substack{j_1,\dots, j_s \in F\\j_1<\dots<j_s}} (\lambda_{j_1}[\f]+1)\cdot \cdot \cdot (\lambda_{j_s}[\f]+1),
\end{equation}
for all $\f \in \mathcal{A}_\Omega[F]$ and to note that 
\begin{equation} \label{lambdatilde}
    \Lambda_{F,s}[\f] = \sum_{k=0}^s (-1)^{s-k} \binom{\abs{F}-k}{s-k} \hat{\Lambda}_{F,k}[\f],
\end{equation}
where we have set $\Lambda_{F,0}= \hat{\Lambda}_{F,0}=1$.

Finally, we set
$$\Theta_\Omega [F]\colon = \set{\f \in \mathcal{A}_\Omega[F] : \lambda_j[\f ]\  \emph{\rm have a common value } \lambda_F[\f] \ \forall j \in F}.$$

Then, we can state our main theorem the proof of which is also based on Lemma~\ref{techlemma} below.  Concerning the assumption on the summability of the second order derivatives of the eigenvectors, we include the following remark.

\begin{rem}\label{regularity}  If $\Omega$ is of class $C^{2,1}$ (which means that locally at the boundary $\Omega$ can be described by the subgraphs of functions of class $C^2$ with Lipschitz continuous second order derivatives), hence in particular if $\Omega$ is of class $C^3$, then   the eigenvectors of problem \eqref{mainweak} belong to the standard Sobolev space $H^2(\Omega)$ of functions in $L^2(\Omega)$ with weak derivatives up to the second order in $L^2(\Omega)$. See e.g., \cite{weber} or the more recent paper \cite{alberti}. 
 \end{rem}

\begin{thm}\label{symmetrican}
    Let $\Omega$ be a bounded domain in  $\mathbb{R}^3$ of class ${\mathcal{C}}^{1,1}$. Let $F$ be a finite non-empty subset of $\mathbb{N} $. 
    Then  $\mathcal{A}_\Omega[F]$ is an open set in $\mathcal{C}^{1,1}(\overline{\Omega}, \mathbb{R}^3)$ and  $\Lambda_{F,s}[\f]$ depends real-analytically on $\f \in \mathcal{A}_\Omega[F]$.  
    
    Let $\tf \in \Theta_\Omega [F]$. Assume that $ \lambda_{F}[\tilde \f] $ is a Maxwell eigenvalue and    $\tilde{E}^{(1)},\dots, \tilde{E}^{(\abs{F})}\in \xn (\di \, 0, \tilde\Phi(\Omega) )$  is  an orthonormal basis of  Maxwell eigenvectors for the corresponding eigenspace, where the orthonormality is taken in $(L^2(\tf (\Omega ))^3$, and assume that those eigenvectors belong to $H^2(\tilde\Phi(\Omega))$.  Then for any $s \in \set{1, \dots, \abs{F}}$,  we have
    \begin{equation}\label{hadamard}
    \begin{split}
        &d\rvert_{\f=\tf}(\Lambda_{F,s})[\Psi]\\
        & \qquad= \binom{\, \abs{F}-1}{s-1} (\lambda_F[\tf])^{s-1} \sum_{l=1}^{\abs{F}} \inttpos \left( \lambda_F [\tf] \abs{\tilde{E}^{(l)}}^2 - \abs{\operatorname{curl} \tilde{E}^{(l)}}^2 \right) \left( (\Psi \circ \tfi) \cdot \nu \right) \, d\sigma
        \end{split}
    \end{equation}
    for all $\Psi  \in  \mathcal{C}^{1,1}(\overline{\Omega}, \mathbb{R}^3) $. 
\end{thm}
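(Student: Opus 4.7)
The plan is to apply the abstract machinery of \cite{lala} for families of compact self-adjoint operators with variable scalar product to the operators $S_\f$ introduced at the end of Section~\ref{transec}. First I would establish openness of $\mathcal{A}_\Omega[F]$ together with real-analyticity of $\Lambda_{F,s}$, and then I would derive the Hadamard formula as a consequence of the general differentiation formula of \cite{lala} combined with a shape-calculus integration-by-parts step.

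For the analyticity part, the bilinear forms $T_\f$ and $J_\f$ defining $S_\f$ have coefficients which are polynomial expressions in the entries of $\operatorname{D}\f$, $(\operatorname{D}\f)^{-1}$ and $\det\operatorname{D}\f$.  Since $\mathcal{A}_\Omega$ is open in $\mathcal{C}^{1,1}(\overline{\Omega},\R^3)$ and matrix inversion is real-analytic on $GL_3(\R)$, the assignments $\f\mapsto T_\f$ and $\f\mapsto J_\f$ are real-analytic from $\mathcal{A}_\Omega$ into the Banach space of bounded bilinear forms on $\xn(\Omega)$.  Consequently $\f\mapsto S_\f$ is real-analytic into $\mathcal{L}(L^2_{\tf}(\Omega))$ locally near any $\tf$, and norm-continuity of the spectrum yields that $\mathcal{A}_\Omega[F]$ is open.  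The abstract theorem of \cite{lala} (real-analyticity of symmetric functions of eigenvalues of real-analytic families of compact self-adjoint operators) then gives real-analyticity of $\hat{\Lambda}_{F,s}[\f]$, and \eqref{lambdatilde} transfers this to $\Lambda_{F,s}[\f]$.

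For the Hadamard formula at $\tf\in\Theta_\Omega[F]$, the general differentiation formula from \cite{lala}, applied at a point where all eigenvalues of $S_\f$ indexed by $F$ collapse to a common value $\mu_F=(\lambda_F[\tf]+1)^{-1}$, gives
\begin{equation*}
d|_{\f=\tf}(\hat{\Lambda}_{F,s})[\Psi] = \binom{|F|-1}{s-1}(\lambda_F[\tf]+1)^{s-1}\sum_{l=1}^{|F|}\delta\mathcal{B}[\Psi](\tilde{E}^{(l)},\tilde{E}^{(l)}),
\end{equation*}
where $\delta\mathcal{B}[\Psi]$ is the derivative at $\tf$ in direction $\Psi$ of the bilinear form $T_\f-\lambda_F[\tf]\,J_\f$.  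The goal is then to identify each summand with the boundary integral $\inttpos\bigl(\lambda_F[\tf]|\tilde{E}^{(l)}|^2-|\cu\tilde{E}^{(l)}|^2\bigr)(V\cdot\nu)\,d\sigma$, where $V:=\Psi\circ\tfi$.  Summing over $l$, and using \eqref{lambdatilde} once more to switch from $\hat{\Lambda}_{F,s}$ to $\Lambda_{F,s}$, converts the factor $(\lambda_F[\tf]+1)^{s-1}$ into $\lambda_F[\tf]^{s-1}$ and produces \eqref{hadamard}.

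The hard part is the shape-derivative identification of $\delta\mathcal{B}[\Psi](\tilde{E}^{(l)},\tilde{E}^{(l)})$ with the claimed boundary integral.  Differentiating the $\f$-dependent coefficients in \eqref{mainback} at $\f=\tf$ produces a volume integral over $\Omega$ paired with $\tilde{E}^{(l)}$ and its curl and divergence, and transporting the result back to $\tf(\Omega)$ via the Piola transforms of Theorems~\ref{changecurl} and \ref{changediv} gives an integral over $\tf(\Omega)$ involving $V$, $\nabla V$, $\tilde{E}^{(l)}$ and $\cu\tilde{E}^{(l)}$.  To collapse this into a pure boundary term, I would integrate by parts (justified by $\tilde{E}^{(l)}\in H^2(\tf(\Omega))$ per Remark~\ref{regularity}), substitute the strong-form equations $\cu\,\cu\tilde{E}^{(l)}=\lambda_F[\tf]\,\tilde{E}^{(l)}$ and $\di\tilde{E}^{(l)}=0$ in the interior, and exploit the boundary identities $\nu\times\tilde{E}^{(l)}=0$ and $\cu\tilde{E}^{(l)}\cdot\nu=0$ (the latter from Lemma~\ref{fundamental}) to cancel all interior contributions and eliminate every tangential component of $V$.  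The careful bookkeeping of these cancellations is the genuine obstacle of the proof.
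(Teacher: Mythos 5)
Your proposal follows essentially the same route as the paper's proof: real-analyticity and openness via the abstract result of \cite{lala} applied to the family $S_\f$ with the variable scalar product $\langle\cdot,\cdot\rangle_\f$, transfer from the symmetric functions of the $\mu_j[\f]$ to $\hat\Lambda_{F,s}$ and then to $\Lambda_{F,s}$ via \eqref{lambdatilde}, and the Hadamard formula by identifying the derivative of the form $T_\f-\lambda_F[\tf]J_\f$ evaluated on the transplanted eigenvectors with the stated boundary integral (your constants are consistent with the paper's). The single step you defer is exactly the content of the paper's Lemma~\ref{techlemma}, and your sketch of it — Piola transport back to $\tf(\Omega)$, integration by parts justified by the $H^2$ assumption, substitution of the strong equations and $\di\tilde E^{(l)}=0$, and the boundary identities $\nu\times\tilde E^{(l)}=0$ and $\operatorname{curl}\tilde E^{(l)}\cdot\nu=0$ from Lemma~\ref{fundamental} — lists precisely the ingredients the paper uses, so the plan is sound even though the lengthy bookkeeping (in particular the cancellation of the $\tau$-penalty terms) is not carried out.
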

\begin{proof} Recall that $\mu_j[\Phi ] =  (\lambda_j[\f]+1)^{-1}$, $j\in {\mathbb{N}}$,  are the eigenvalues of the operator $S_{\Phi }$, hence the set 
$\mathcal{A}_\Omega[F]$ coincides with the set $  \{\f \in \mathcal{A}_\Omega : \mu_j[\f] \neq \mu_l[\f],\ \forall j \in F, l \in \mathbb{N}\setminus F\}$.   Moreover, $S_{\Phi}$ is a compact self-adjoint operator in $L^2_{\Phi}(\Omega )$. Since both the operator $S_{\Phi}$ and the scalar product 
    $\langle \cdot , \cdot \rangle_\f $ depend real-analytically on $\Phi$, being obtained by compositions and inversions  of real-analytic maps (such as linear and multilinear continuous maps), we can apply the general result \cite[Thm.~2.30]{lala}. Thus, the set  $  \set{\f \in \mathcal{A}_\Omega : \mu_j[\f] \neq \mu_l[\f],\ \forall j \in F, l \in \mathbb{N}\setminus F}$ is open in $\mathcal{C}^{1,1}(\overline{\Omega}, \mathbb{R}^3)$ as required, and the functions  
$$M_{F,s}[\Phi ] \colon = \sum_{\substack{j_1,\dots, j_s \in F \\ j_1 < \dots < j_s }} \mu_{j_1}[\f] \cdots \mu_{j_s}[\f],$$
depend real-analytically on $\Phi\in \mathcal{A}_\Omega[F]$. Since 
  $$\hat{\Lambda}_{F,s}[\f] = \frac{M_{F,\, \abs{F}-s}[\Phi   ]   }{M_{F,\, \abs{F}}[\Phi  ]}, \qquad s=1,\dots, \abs{F},$$
  we deduce that $\hat{\Lambda}_{F,s}[\f] $ depend real-analytically on $\Phi\in \mathcal{A}_\Omega[F]$. Finally, by formula \eqref{lambdatilde} we conclude that 
$    \Lambda_{F,s}[\f]$ depend real-analytically on $\Phi\in \mathcal{A}_\Omega[F]$. 

We now prove formula \eqref{hadamard}.     We set $\tu^{(l)} = (\tilde{E}^{(l)} \circ \tf) \operatorname{D}\tf$ for all $l=1,\dots, \, \abs{F}$ and we observe that $\tilde u^{(l)},\ l\in 1, \dots , |F|$ is an orthonormal basis  in  $L^2_{\Tilde \Phi }(\Omega )$ of the eigenspace associated with the eigenvalue $\mu_{F}[\tf]:=(\lambda_F[\tf]+1)^{-1}$ of the operator $S_{\tilde \Phi }$. 
 By applying \cite[Thm.~2.30]{lala},    we have that
    \begin{equation*}
        d\rvert_{\f=\tf} M_{F,s} [\Psi ] = \binom{\, \abs{F}-1}{s-1} (\lambda_F [\tf]+1)^{1-s} \sum_{l=1}^{\abs{F}} \langle d\rvert_{\f=\tf} \, S_{\f} [\Psi][\tu^{(l)}], \tu^{(l)} \rangle_{\tf}.
    \end{equation*}
    Therefore, by Lemma~\ref{techlemma}
    \begin{align*}
        &d\rvert_{\f=\tf}(\hat{\Lambda}_{F,s}) [\Psi] \\
        &  =\{d\rvert_{\f=\tf} M_{F, \, \abs{F}-s} \, [\f   ][\Psi] M_{F, \, \abs{F}} \, [\tf   ]    - M_{F, \, \abs{F}-s} \, [\tf] \, d\rvert_{\f=\tf} M_{F, \, \abs{F}} \, [\f] [\Psi] \} \\
        &\cdot (\lambda_F [\tf]+1)^{2 \, \abs{F}}  = \left[ \binom{\, \abs{F}-1}{\, \abs{F}-s-1} (\lambda_F [\tf] +1)^{s+1-2 \, \abs{F}}- \binom{\, \abs{F}}{s} \binom{\, \abs{F}-1}{\, \abs{F}-1} \right. \\
        & \left. (\lambda_F [\tf] +1)^{s+1-2 \, \abs{F}} \right] (\lambda_F [\tf]+1)^{2 \, \abs{F}} \sum_{l=1}^{\abs{F}} \langle d\rvert_{\f=\tf} \, S_{\f} [\Psi][\tu^{(l)}], \tu^{(l)} \rangle_{\tf}\\
        &  = (\lambda_F [\tf]+1)^{s-1} \binom{\, \abs{F}-1}{s-1} \sum_{l=1}^{\abs{F}} \inttpos \left( \lambda_F [\tf] \abs{\tilde{E}^{(l)}}^2 - \abs{\operatorname{curl} \tilde{E}^{(l)}}^2 \right) \left( (\Psi \circ \tfi) \cdot \nu \right) \, d\sigma.
    \end{align*}
    Thus, using \eqref{lambdatilde}, we get
    \begin{eqnarray*}\lefteqn{
        d\rvert_{\f=\tf}(\Lambda_{F,s}) [\Psi]}\\
        &  &= \sum_{k=1}^s (-1)^{s-k} (\lambda_F [\tf] +1)^{k-1} \binom{\, \abs{F} - k}{s-k} \binom{\, \abs{F}-1}{k-1}\\  
        & & \quad \cdot \sum_{l=1}^{\abs{F}}\inttpos \left( \lambda_F [\tf] \abs{\tilde{E}^{(l)}}^2 - \abs{\operatorname{curl} \tilde{E}^{(l)}}^2 \right) \left( (\Psi \circ \tfi) \cdot \nu \right) \, d\sigma\\
        & & = \binom{\, \abs{F}-1}{s-1} \sum_{k=0}^{s-1} \binom{s-1}{k} (\lambda_F[\tf] +1)^k (-1)^{s-k-1} \\
        & & \quad \cdot  \sum_{l=1}^{\abs{F}} \inttpos \left( \lambda_F [\tf] \abs{\tilde{E}^{(l)}}^2 - \abs{\operatorname{curl} \tilde{E}^{(l)}}^2 \right) \left( (\Psi \circ \tfi) \cdot \nu \right) \, d\sigma \\
        & & = \binom{\, \abs{F}-1}{s-1} (\lambda_F[\tf])^{s-1} \sum_{l=1}^{\abs{F}} \inttpos \left( \lambda_F [\tf] \abs{\tilde{E}^{(l)}}^2 - \abs{\operatorname{curl} \tilde{E}^{(l)}}^2 \right) \left( (\Psi \circ \tfi) \cdot \nu \right) \, d\sigma 
    \end{eqnarray*}
which proves formula \eqref{hadamard}. 
\end{proof}

\begin{lem}\label{techlemma}   Let $\Omega$ be a bounded domain in  $\mathbb{R}^3$ of class ${\mathcal{C}}^{1,1}$ and  $\tilde \Phi \in \mathcal{A}_\Omega$.  Let $\tv, \tw \in \xn (\di \, 0, \tilde \Phi (\Omega))$ be two  Maxwell eigenvectors associated with a Maxwell eigenvalue $\tilde \lambda$. Assume that 
$\tv, \tw \in H^2(\tilde \Phi (\Omega))$.
Let $\tu= (\tv \circ \tilde\f) \operatorname{D}\tilde\f $, $\te = (\tw \circ \tilde\f) \operatorname{D}\tilde\f $.
Then\begin{eqnarray}\lefteqn{
\langle d\rvert_{\f=\tf} S_{\Phi}  [\Psi][\tu], \te \rangle_{\tf} }\nonumber \\
& & \qquad= (\tilde{\lambda}+1)^{-2} \int_{\partial \tf(\Omega)} \left( \operatorname{curl}\tv \cdot \operatorname{curl}\tw - \tilde{\lambda} \ \tv \cdot \tw \right) \left( (\Psi \circ \tfi) \cdot \nu \right) d\sigma \, ,
\end{eqnarray}
for all $\Psi \in  \mathcal{C}^{1,1}(\overline{\Omega}, \mathbb{R}^3) $.
\end{lem}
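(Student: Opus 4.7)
The plan has three steps: an abstract reduction using the self-adjoint structure of $S_\Phi$, an explicit transport to the moving domain $\Phi(\Omega)$ via the Piola change of variables, and finally an integration by parts using the Maxwell equations satisfied by $\tilde v$ and $\tilde w$.

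First, I would differentiate the defining identity $(T_\Phi + J_\Phi)\, S_\Phi = J_\Phi$ at $\Phi = \tilde\Phi$ in the direction $\Psi$, evaluated on $\tilde u$ and paired with $\tilde\varphi$ in the duality of $X_N(\Omega)'$ with $X_N(\Omega)$. Using the eigenvector relations $S_{\tilde\Phi}\tilde u = (\tilde\lambda+1)^{-1}\tilde u$ and $(T_{\tilde\Phi} + J_{\tilde\Phi})\tilde\varphi = (\tilde\lambda+1)\, J_{\tilde\Phi}\tilde\varphi$, together with the symmetry of the bilinear form $(u,\varphi)\mapsto \langle (T_{\tilde\Phi}+J_{\tilde\Phi})u,\varphi\rangle$ on $X_N(\Omega)$, the calculation yields the algebraic reduction
\begin{equation*}
(\tilde\lambda+1)^2\, \langle d|_{\Phi=\tilde\Phi} S_\Phi[\Psi]\tilde u, \tilde\varphi\rangle_{\tilde\Phi}
= \tilde\lambda\, \langle dJ_\Phi[\Psi]\tilde u, \tilde\varphi\rangle - \langle dT_\Phi[\Psi]\tilde u, \tilde\varphi\rangle,
\end{equation*}
so it suffices to compute the right-hand side.

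Second, by Theorems~\ref{changecurl} and \ref{changediv} the quantity $\langle T_\Phi \tilde u, \tilde\varphi\rangle - \tilde\lambda \langle J_\Phi \tilde u, \tilde\varphi\rangle$ equals, after change of variables to $\Phi(\Omega)$, the integral of $\operatorname{curl} v^\Phi \cdot \operatorname{curl} w^\Phi + \tau \operatorname{div} v^\Phi \operatorname{div} w^\Phi - \tilde\lambda\, v^\Phi \cdot w^\Phi$ over $\Phi(\Omega)$, where $v^\Phi,w^\Phi$ are the (covariant) Piola push-forwards of the fixed fields $\tilde u, \tilde\varphi$ and satisfy $v^{\tilde\Phi} = \tilde v$, $w^{\tilde\Phi} = \tilde w$. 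Setting $\Phi_\epsilon = \tilde\Phi + \epsilon\Psi$, $U := \tilde\Phi(\Omega)$, and $V := \Psi \circ \tilde\Phi^{-1}$, I would apply the Hadamard/Reynolds transport formula to this moving-domain integral. Since $\operatorname{div}\tilde v = \operatorname{div}\tilde w = 0$, the divergence contribution on $\partial U$ disappears, so the boundary part of the transport formula is exactly
$\int_{\partial U}\bigl(\operatorname{curl}\tilde v\cdot\operatorname{curl}\tilde w - \tilde\lambda\,\tilde v\cdot\tilde w\bigr)(V\cdot\nu)\, d\sigma$,
while the volume part is $\int_U f'\, dy$ where $f'$ is the shape (Eulerian) derivative of the integrand at $\epsilon = 0$, involving the shape derivatives $v'$, $w'$ of the Piola push-forwards.

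The hard part is to show that this volume piece provides exactly the correction needed to produce the claimed boundary formula. For this I would integrate by parts the two curl-curl terms of $f'$ via formula~\eqref{intparts}; the hypothesis $\tilde v, \tilde w \in H^2(U)$ is essential here, as it guarantees that $\operatorname{curl}\tilde v, \operatorname{curl}\tilde w$ admit classical boundary traces. Using $\operatorname{curl}^2\tilde v = \tilde\lambda\tilde v$ and $\operatorname{curl}^2\tilde w = \tilde\lambda\tilde w$ the interior $\tilde\lambda$-terms cancel, leaving only a surface contribution $\int_{\partial U}\bigl((\nu\times v')\cdot\operatorname{curl}\tilde w + (\nu\times w')\cdot\operatorname{curl}\tilde v\bigr)\, d\sigma$. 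The most delicate step is then to evaluate this boundary expression: I would differentiate the Piola-preserved Dirichlet condition $\nu_{\partial\Phi_\epsilon(\Omega)}\times v^{\Phi_\epsilon} = 0$ (valid on $\partial\Phi_\epsilon(\Omega)$ by Remark~\ref{remc1}, since $\tilde u \in H_0(\operatorname{curl},\Omega)$) with respect to $\epsilon$ to obtain an explicit expression for the tangential trace of $v'$ on $\partial U$ in terms of $\tilde v$, $V$ and the boundary geometry, and similarly for $w'$. Combined with Lemma~\ref{fundamental} (so that $\operatorname{curl}\tilde v$ and $\operatorname{curl}\tilde w$ are tangential on $\partial U$) and the fact that $\tilde v, \tilde w$ are purely normal on $\partial U$, these contributions should telescope with the Hadamard boundary term and, after the overall sign from Step~1, yield precisely $(\tilde\lambda+1)^{-2}\int_{\partial U}(\operatorname{curl}\tilde v\cdot\operatorname{curl}\tilde w - \tilde\lambda \tilde v\cdot\tilde w)(V\cdot\nu)\, d\sigma$. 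Keeping track of the normal/tangential splitting and of the Lie-derivative structure of $v', w'$ in this final step is where the real work of the proof lies.
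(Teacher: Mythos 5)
Your Step 1 (the reduction of $\langle d\rvert_{\f=\tf}S_{\f}[\Psi][\tu],\te\rangle_{\tf}$ to $(\tilde\lambda+1)^{-2}\bigl(\tilde\lambda\, dJ_{\f}[\Psi][\tu][\te]-dT_{\f}[\Psi][\tu][\te]\bigr)$ via the symmetry of $T_{\tf}+J_{\tf}$ and the eigenvector relations) is exactly the paper's first step. From there you diverge: the paper stays entirely Lagrangian --- it differentiates the explicit coefficient matrices $R_\f$, $G_\f$, $\operatorname{det}\operatorname{D}\f$ and the penalty functional $N$ on the fixed domain $\Omega$, pushes the result to $\tf(\Omega)$, and then performs a long but elementary integration by parts using only the \emph{fixed} eigenvectors $\tv,\tw$, their boundary conditions $\tv\times\nu=\tw\times\nu=0$, Lemma~\ref{fundamental}, and the eigenvalue equation. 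No shape derivative of an eigenfield ever appears. You instead propose the Eulerian route: rewrite $dT-\tilde\lambda\, dJ$ as the derivative of a moving-domain integral of the push-forwards $v^{\f_\epsilon},w^{\f_\epsilon}$, apply the transport theorem, and handle the volume term through the shape derivatives $v',w'$. This is a legitimate alternative (it is the exact analogue of the classical derivation of the Hadamard formula for the Dirichlet Laplacian, where the transport boundary term carries the ``wrong'' sign and the volume term contributes $-2$ times the final answer), and your bookkeeping of which pieces vanish by $\operatorname{div}\tv=\operatorname{div}\tw=0$ and which boundary term dies by Lemma~\ref{fundamental} is consistent with that structure.

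The gap is that the decisive computation is asserted rather than performed. After your integration by parts you are left with $\int_{\partial\tf(\Omega)}\bigl((\nu\times v')\cdot\operatorname{curl}\tw+(\nu\times w')\cdot\operatorname{curl}\tv\bigr)\,d\sigma$, and the whole lemma reduces to showing that this equals $-2\int_{\partial\tf(\Omega)}(\operatorname{curl}\tv\cdot\operatorname{curl}\tw-\tilde\lambda\,\tv\cdot\tw)(\zeta\cdot\nu)\,d\sigma$ with $\zeta=\Psi\circ\tfi$. Obtaining the tangential trace of $v'$ requires differentiating the condition $\nu_{\partial\f_\epsilon(\Omega)}\times v^{\f_\epsilon}=0$ on a \emph{moving} boundary, which brings in the variation of the normal and the second fundamental form of $\partial\tf(\Omega)$; one must then check that the curvature contributions cancel against the terms produced by the Lie-derivative structure of $v'$, and that the $\tilde\lambda\,\tv\cdot\tw$ term is recovered from the normal components of $\tv,\tw$. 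None of this is written down --- you explicitly defer it as ``where the real work of the proof lies'' --- so as it stands the argument establishes the framework but not the identity. If you want to complete the proof along your lines you must carry out that boundary computation; alternatively, the paper's route shows how to bypass it entirely by never introducing $v'$ and instead integrating by parts the explicit expression $-\int_{\tf(\Omega)}\tv\,(\operatorname{D}\zeta+(\operatorname{D}\zeta)^T)\,\tw^T\,dy+\int_{\tf(\Omega)}\tv\,\tw^T\operatorname{div}\zeta\,dy$ coming from $dJ_\f$.
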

\begin{proof}
To shorten our notation, we set $\Upsilon_{\Phi}=T_{\Phi}+J_{\Phi}   $.  Since $J_\tf [\tu] = (\tilde{\lambda} +1)^{-1}  \Upsilon_{\tilde\Phi}    [\tu]$, $J_\tf [\te] = (\tilde{\lambda} +1)^{-1} \Upsilon_{\tilde\Phi}  [\te]$, and $ \Upsilon_{\tilde\Phi} $ is symmetric, we have that

\begin{eqnarray}\label{diff1}
\lefteqn{
    \langle  d\rvert_{\f=\tf}   S_{\Phi}  [\Psi][\tu], \te \rangle_{\tf}} \nonumber \\
  &   &   =\langle \iota   \circ\optrit \circ d\rvert_{\f=\tf} J_{\f} [\Psi][\tu], \te \rangle_{\tf} +
    \langle \iota \circ d\rvert_{\f=\tf} \optri [\Psi] \circ J_{\tf} [\tu], \te \rangle_{\tf}\nonumber  \\
  &  &=J_{\tf}[\te] \left[ \iota  \circ\optrit \circ d\rvert_{\f=\tf} J_{\f}[\Psi][\tu] \right] +
    J_{\tf}[\te] \left[ \iota \circ d\rvert_{\f=\tf} \optri [\Psi] \circ J_{\tf} [\tu] \right] \nonumber \\
   & &= (\tilde{\lambda}+1)^{-1} \optrt[\te] \left[\optrit \circ d\rvert_{\f=\tf} J_{\f}[\Psi][\tu] -\optrit \circ d\rvert_{\f=\tf} \optr[\Psi] \circ\optrit \circ  J_{\tf} [\tu] \right]\nonumber  \\
  &  &=(\tilde{\lambda}+1)^{-1} \optrt \left[\optrit \circ d\rvert_{\f=\tf} J_{\f}[\Psi][\tu] -\optrit \circ d\rvert_{\f=\tf} \optr[\Psi] \circ\optrit \circ (\tilde{\lambda}+1)^{-1} \optrt[\tu]\right] [\te] \nonumber \\
 &   &=(\tilde{\lambda}+1)^{-1} \left( d\rvert_{\f=\tf} J_{\f}[\Psi][\tu] [\te] - (\tilde{\lambda}+1)^{-1}d\rvert_{\f=\tf} \optr[\Psi] [\tu] [\te] \right)\nonumber  \\
 &   &= (\tilde{\lambda}+1)^{-1} \left( \tilde{\lambda}(\tilde{\lambda}+1)^{-1} \, d\rvert_{\f=\tf} J_{\f}[\Psi][\tu] [\te] - (\tilde{\lambda}+1)^{-1} d\rvert_{\f=\tf} T_{\f}[\Psi] [\tu] [\te] \right) \nonumber \\
 &   &=(\tilde{\lambda}+1)^{-2} \left( \tilde{\lambda}\, d\rvert_{\f=\tf} J_{\f}[\Psi][\tu] [\te] - d\rvert_{\f=\tf} T_{\f}[\Psi] [\tu] [\te] \right).
\end{eqnarray}

We claim that 
\begin{equation}\label{derjphilemma}
\begin{split}
 d\rvert_{\f=\tf} J_{\f}[\Psi][\tilde u] [\te] = & -\int_{\tf (\Omega)} \tv \left( \operatorname{D}(\Psi \circ \tfi) + \operatorname{D}(\Psi \circ \tfi)^T \right) \tw^T dy \\
 & \qquad \qquad +\int_{\tf(\Omega)} \tv \tw^T \operatorname{div} (\Psi \circ \tfi) dy.
\end{split}
\end{equation}

Indeed, it is easy to see that
\begin{equation}\label{derdet}
    \left[\left[d\rvert_{\f=\tf} (\operatorname{det} (\operatorname{D}\f)) [\Psi] \right] \circ \tfi \right] \operatorname{det}\operatorname{D}\tfi = \operatorname{div}\left( \Psi \circ \tfi \right).
\end{equation}
Moreover, since 
\begin{equation*}
    J_\f[\tu][\te] =\int_\Omega \tu \, R_\f \, \te^T \abs{\operatorname{det}\operatorname{D}\f} dx,
\end{equation*}
where $R_\f = (\operatorname{D}\f)^{-1} (\operatorname{D} \f)^{-T}$,
then
\begin{equation}\label{derjphi}
    d\rvert_{\f=\tf} J_\f [\Psi] [\tu][\te] = \into \tu \, d\rvert_{\f=\tf} R_\f [\Psi] \, \te^T \abs{\operatorname{det}\operatorname{D}\f} dx + \into \tu \, R_\tf \, \te^T d\rvert_{\f=\tf} \abs{\operatorname{det}\operatorname{D}\f} [\Psi] \, dx.
\end{equation}
Note that 
\begin{equation}\label{rphi}
    \left[ d\rvert_{\f=\tf} R_\f [\Psi] \right] \circ \tfi = - \operatorname{D}\tfi \left[ \operatorname{D}(\Psi \circ \tfi) + (\operatorname{D}(\Psi \circ \tfi))^T \right] (\operatorname{D}\tfi)^T.
\end{equation}
Therefore by equalities \eqref{derdet}, \eqref{derjphi}, \eqref{rphi} and a change of variables, equality \eqref{derjphilemma} follows.

We now compute the second term in the right-hand side of \eqref{diff1}. Obviously, we have that 
$$d\rvert_{\f=\tf} T_{\f}[\Psi] [\tu] [\te] = \into \operatorname{curl}\tu \left( d\rvert_{\f=\tf} G_\f [\Psi]\right) (\operatorname{curl}\te)^T \, dx + \tau \ d\rvert_{\f=\tf} \into \frac{N(\f,\tu,\te)}{\abs{\operatorname{det}(\operatorname{D}\f)}} \, dx,$$
where we have set 
$\displaystyle G_\f  = \frac{(\operatorname{D}\f)^T \operatorname{D}\f}{\abs{\operatorname{det}(\operatorname{D}\f)}}$ and $N(\f,\tu,\te) = N(\f , \tu)N(\f, \te)$  with  
$$
N(\f,\eta)= 
\operatorname{div}_x \left( \eta  (\operatorname{D}\f)^{-1} (\operatorname{D}\f)^{-T} \operatorname{det}(\operatorname{D}\f)  \right) ,
$$
for any vector field $\eta $. 
To shorten  our notation, we also  set $\zeta = \Psi \circ \tfi$. By a change of variables one can see that 
\begin{align*}
&\into \operatorname{curl}\tu \ d\rvert_{\f=\tf} G_\f \ (\operatorname{curl}\te)^T \, dx =\\
&=\into \operatorname{curl}\tu \ \frac{(\operatorname{D}\Psi)^T \operatorname{D}\tf + (\operatorname{D}\tf)^T \operatorname{D}\Psi }{\abs{\operatorname{det}(\operatorname{D}\tf)}} \ (\operatorname{curl}\te)^T \, dx \\
& \qquad -\into \operatorname{curl}\tu \ \frac{(\operatorname{D}\tf)^T \operatorname{D}\tf \ d\rvert_{\f=\tf} \abs{\operatorname{det}(\operatorname{D}\f)}[\Psi]}{\left( \operatorname{det}(\operatorname{D}\tf) \right)^2} \ (\operatorname{curl}\te)^T\, dx \\
&= \int_{\tf(\Omega)} \operatorname{curl}\tv \left( \operatorname{D}\zeta + (\operatorname{D}\zeta)^T \right) (\operatorname{curl}\tw)^T \, dy - \int_{\tf(\Omega)} \operatorname{curl}\tv \, (\operatorname{curl}\tw)^T \operatorname{div}(\zeta) \, dy.
\end{align*}
Now by standard calculus we have 
\begin{eqnarray}\label{denne}
\lefteqn{  d \rvert_{\f=\tf} \int_\Omega \frac{N(\f,\tu,\te)}{\abs{\operatorname{det}(\operatorname{D}\f)}} \, dx =\int_\Omega \frac{d}{dt} \biggl( \frac{N(\tf + t \Psi,\tu,\te)}{|\operatorname{det}(\operatorname{D}(\tf + t \Psi)|} \biggr) \biggr\rvert_{t=0}  dx  }\nonumber \\
    &    &= \int_\Omega \frac{\frac{d}{dt} (N(\tf+t\Psi,\tu,\te) \rvert_{t=0} \  \abs{\operatorname{det}(\operatorname{D}\tf)} - N(\tf,\tu,\te) \, d\rvert_{\f= \tf} \abs{\operatorname{det}(\operatorname{D}\f)} [\Psi]}{(\operatorname{det}(\operatorname{D}\tf))^2} \, dx\nonumber \\
 &  &    = \int_\Omega    \left(     \frac{d}{dt} (N(\tf+t\Psi,\tu) \rvert_{t=0}  N(\tf, \te   ) +N(\tf, \tu   )
  \frac{d}{dt} (N(\tf+t\Psi,\te) \rvert_{t=0}   \right)
 \abs{\operatorname{det}\operatorname{D}\tf}^{-1} \, dx \nonumber\\
& &    -\int_{\tf(\Omega)} \operatorname{div}_y (\tv) \operatorname{div}_y (\tw) \operatorname{div}_y (\zeta) \, dy.
\end{eqnarray} 
Going back to formula \eqref{derjphilemma}, we note that by integrating by parts we obtain that 
\begin{eqnarray*}
  \lefteqn{ 
        -\int_{\tf(\Omega)} \tv \left(\operatorname{D}\zeta + (\operatorname{D}\zeta)^T \right) \tw^T \, dy =-\int_{\tf(\Omega)} \tv_i(\partial_j \zeta_i) \tw_j \, dy - \int_{\tf(\Omega)} \tv_i (\partial_i \zeta_j) \tw_j \, dy } \\
     &  &=\int_{\tf(\Omega)} (\partial_j \tv_i) \tw_j \zeta_i \,dy + \int_{\tf(\Omega)} (\tv \cdot \zeta) \operatorname{div}(\tw) \, dy - \int_{\partial \tf(\Omega)} (\tv \cdot\zeta) (\tw \cdot \nu) \, d\sigma \\
     &  & +\int_{\tf(\Omega)} (\tw \cdot \zeta) \operatorname{div}(\tv) \, dy + \int_{\tf(\Omega)} (\partial_i \tw_j) \tv_i \zeta_j \, dy - \int_{\partial \tf(\Omega)} (\tv \cdot \nu) (\tw \cdot \zeta) \, d\sigma\\
     &  &=\int_{\tf(\Omega)} (\partial_j \tv_i) \tw_j \zeta_i \,dy + \int_{\tf(\Omega)} (\partial_i \tw_j) \tv_i \zeta_j \, dy + \int_{\tf(\Omega)} (\tv \cdot \zeta) \operatorname{div}(\tw) \, dy \\
     &  &  +\int_{\tf(\Omega)} (\tw \cdot \zeta) \operatorname{div}(\tv) \, dy - 2\int_{\partial \tf(\Omega)} (\tv \cdot \tw) (\zeta \cdot \nu) \, d\sigma,
\end{eqnarray*}
where we have used the fact that $\tv \times \nu = 0 = \tw \times \nu$  hence
$$(\tv \cdot \nu) (\tw \cdot \zeta) = (\tv \cdot \zeta) (\tw \cdot \nu) = (\tv \cdot \nu) (\tw \cdot \nu) (\zeta \cdot \nu) = (\tv \cdot \tw) (\zeta \cdot \nu)$$
on $\partial \tf(\Omega)$.
Now, since 
$$\sum_{i,j=1}^3 (\partial_j \tv_i) \tw_j \zeta_i - (\partial_i \tv_j) \tw_j \zeta_i = \operatorname{curl}\tv \cdot (\tw \times \zeta),$$
$$\sum_{i,j=1}^3 (\partial_i \tw_j) \tv_i \zeta_j - (\partial_j \tw_i) \tv_i \zeta_j = \operatorname{curl}\tw \cdot (\tv \times \zeta),$$
we deduce that
\begin{equation*}
    \begin{split}
        &\int_{\tf(\Omega)} (\partial_j \tv_i) \tw_j \zeta_i \,dy + \int_{\tf(\Omega)} (\partial_i \tw_j) \tv_i \zeta_j \, dy \\
        &=\int_{\tf(\Omega)} (\partial_j \tv_i) \tw_j \zeta_i \,dy + \int_{\tf(\Omega)} (\partial_i \tw_j) \tv_i \zeta_j \, dy -\int_{\tf(\Omega)} (\partial_i \tv_j) \tw_j \zeta_i \,dy \\
        &- \int_{\tf(\Omega)} (\partial_j \tw_i) \tv_i \zeta_j \, dy + \int_{\tf(\Omega)} \nabla (\tv \cdot \tw) \cdot \zeta \, dy\\
        &=\int_{\tf(\Omega)} \operatorname{curl}\tv \cdot (\tw \times \zeta) \, dy + \int_{\tf(\Omega)} \operatorname{curl}\tw \cdot (\tv \times \zeta) \, dy + \int_{\tf(\Omega)} \nabla (\tv \cdot \tw) \cdot \zeta \, dy.
    \end{split}
\end{equation*}
Thus we get
\begin{equation*}
    \begin{split}
         &-\int_{\tf(\Omega)} \tv \left(\operatorname{D}\zeta + (\operatorname{D}\zeta)^T \right) \tw^T \, dy \\
         &= \int_{\tf(\Omega)} \tv \cdot (\zeta \times \operatorname{curl}\tw) \, dy + \int_{\tf(\Omega)} \tw \cdot (\zeta \times \operatorname{curl}\tv) \, dy + \int_{\tf(\Omega)} \nabla (\tv \cdot \tw) \cdot \zeta \, dy \\
         & + \int_{\tf(\Omega)} (\tv \cdot \zeta) \operatorname{div}\tw \, dy +\int_{\tf(\Omega)} (\tw \cdot \zeta) \operatorname{div}\tv \, dy -2 \int_{\partial \tf(\Omega)} (\tv \cdot \tw)(\zeta \cdot \nu) \, d\sigma.
    \end{split}
\end{equation*}

Since $v$ and $w$ satisfy the equation in \eqref{mainpenweak} on $\tilde\Phi (\Omega)$, we have
\begin{equation}\label{formulone}
    \begin{split}
        &\int_{\tf(\Omega)} \tv \cdot (\zeta \times \operatorname{curl}\tw) \, dy  \\
        &=\lambda^{-1} \int_{\tf(\Omega)} \operatorname{curl}\operatorname{curl}\tv \cdot (\zeta \times \operatorname{curl}\tw) \, dy - \lambda^{-1} \tau \int_{\tf(\Omega)} \nabla \operatorname{div}\tv \cdot (\zeta \times \operatorname{curl}\tw) \, dy \\
        &= -\lambda^{-1}\int_{\tf(\Omega)} (\operatorname{curl}\tw \cdot \operatorname{curl}\tv) \operatorname{div}(\zeta) \, dy +  \lambda^{-1} \int_{\tf(\Omega)} \operatorname{curl}\tv \operatorname{D}\zeta \, (\operatorname{curl}\tw)^T \, dy  \\
        & - \lambda^{-1} \int_{\tf(\Omega)} \operatorname{curl}\tv \operatorname{D}(\operatorname{curl}\tw) \, \zeta^T \, dy    - \lambda^{-1} \tau \int_{\tf(\Omega)} \nabla \operatorname{div}\tv \cdot (\zeta \times \operatorname{curl}\tw) \, dy  \\
        & + \lambda^{-1} \int_{\partial\tf(\Omega)} (\zeta \cdot \nu) (\operatorname{curl}\tw \cdot \operatorname{curl}\tv) \, d\sigma
           - \lambda^{-1} \int_{\partial\tf(\Omega)} (\zeta \cdot \operatorname{curl}\tv) (\nu \cdot \operatorname{curl}\tw) \, d\sigma    ,
    \end{split}
\end{equation}
where, in order to compute the boundary integrals, we have used the following formula
\begin{eqnarray}
\lefteqn{ (\nu \times \operatorname{curl} \tv) \cdot (\zeta \times  \operatorname{curl} \tw) } \nonumber  \\
& =\xi_{ijk} \nu_j  ( \operatorname{curl} \tv)_k \xi_{ilm} \zeta_l  ( \operatorname{curl} \tw)_m  = ( \delta_{jl}\delta_{km}- \delta_{jm}\delta_{kl})\nu_j\zeta_l  ( \operatorname{curl} \tv)_k( \operatorname{curl} \tw)_m\nonumber    \\  
& =  (\zeta \cdot \nu) (\operatorname{curl} \tw \cdot  \operatorname{curl} \tv)  - (\nu \cdot  \operatorname{curl} \tw) (\zeta \cdot  \operatorname{curl} \tv)  \, .
\end{eqnarray}  

We note that the last boundary term in formula \eqref{formulone} vanishes by Lemma~\ref{fundamental}.
Then
\begin{equation*}
    \begin{split}
        &-\tilde{\lambda}\int_{\tf(\Omega)} \tv \left(\operatorname{D}\zeta + (\operatorname{D}\zeta)^T \right) \tw^T \, dy \\
        &= -2 \int_{\tf(\Omega)} (\operatorname{curl}\tw \cdot \operatorname{curl}\tv) \operatorname{div}\zeta \, dy + \int_{\tf(\Omega)} \operatorname{curl}\tw \left( \operatorname{D}\zeta + (\operatorname{D}\zeta)^T \right) (\operatorname{curl}\tv)^T \, dy \\
        & - \int_{\tf(\Omega)} \nabla (\operatorname{curl}\tw \cdot \operatorname{curl}\tv) \cdot \zeta \, dy - \tau \int_{\tf(\Omega)} \nabla \operatorname{div}\tv \cdot (\zeta \times \operatorname{curl}\tw) \, dy \\
        & -\tau \int_{\tf(\Omega)} \nabla \operatorname{div}\tw \cdot(\zeta \times \operatorname{curl}\tv) \, dy +2 \int_{\partial \tf(\Omega)} (\zeta \cdot \nu) (\operatorname{curl}\tw \cdot \operatorname{curl}\tv) \, d\sigma \\
        & + \tilde{\lambda}\int_{\tf(\Omega)} \nabla (\tv \cdot \tw) \cdot \zeta \, dy + \tilde{\lambda}\int_{\tf(\Omega)} (\tw \cdot \zeta) \operatorname{div}\tv \, dy + \tilde{\lambda}\int_{\tf(\Omega)} (\tv \cdot \zeta) \operatorname{div}\tw \, dy \\
        & -2 \tilde{\lambda} \int_{\partial \tf(\Omega)} (\tv \cdot \tw) (\zeta \cdot \nu) \, d\sigma = - \int_{\tf(\Omega)} (\operatorname{curl}\tw \cdot \operatorname{curl}\tv) \operatorname{div}\zeta \, dy \\
        & + \int_{\tf(\Omega)} \operatorname{curl}\tw \left( \operatorname{D}\zeta + (\operatorname{D}\zeta)^T \right) (\operatorname{curl}\tv)^T \, dy - \tau \int_{\tf(\Omega)} \nabla \operatorname{div}\tv \cdot (\zeta \times \operatorname{curl}\tw) \, dy \\
        & - \tau \int_{\tf(\Omega)} \nabla \operatorname{div}\tw \cdot (\zeta \times \operatorname{curl}\tv) \, dy - \tilde{\lambda}\inttpo (\tv \cdot \tw) \operatorname{div}\zeta \, dy + \tilde{\lambda}\int_{\tf(\Omega)} (\tw \cdot \zeta) \operatorname{div}\tv \, dy \\
        & + \tilde{\lambda}\int_{\tf(\Omega)} (\tv \cdot \zeta) \operatorname{div}\tw \, dy - \tilde{\lambda} \int_{\partial \tf(\Omega)} (\tv \cdot \tw) (\zeta \cdot \nu) \, d\sigma + \inttpos (\operatorname{curl}\tw \cdot \operatorname{curl}\tv) (\zeta \cdot \nu) \, d\sigma.
        \end{split}
\end{equation*}

Hence
\begin{equation*}
    \begin{split}
        &(\tilde{\lambda}+1)^2 \langle d\rvert_{\f=\tf} S_{\Phi}  [\Psi][\tu], \te \rangle_{\tf}\\
        &  = -\tilde{\lambda}\int_{\tf(\Omega)} \tv \left(\operatorname{D}\zeta + (\operatorname{D}\zeta)^T \right) \tw^T \, dy + \tilde{\lambda}\int_{\tf(\Omega)} \tv \tw^T \operatorname{div}(\zeta) \, dy  \\
        & - \int_{\tf(\Omega)} \operatorname{curl}\tv \left( 
        \operatorname{D}\zeta + (\operatorname{D}\zeta)^T \right) (\operatorname{curl}\tw)^T \, dy  + \int_{\tf(\Omega)} \operatorname{curl}\tv (\operatorname{curl}\tw)^T \operatorname{div}(\zeta) \, dy  \\
        &  - \tau \, \ d \rvert_{\f=\tf} \int_\Omega \frac{N(\f,\tu,\te))}{\abs{\operatorname{det}(\operatorname{D}\f)}} \, dx 
        =\int_{\partial \tf(\Omega)} \left( \operatorname{curl}\tv \cdot \operatorname{curl}\tw  - \tilde{\lambda} \ \tv \cdot \tw \right) (\zeta \cdot \nu) \, d\sigma \\
        &  - \tau \int_{\tf(\Omega)} \nabla \operatorname{div}\tv \cdot (\zeta \times \operatorname{curl}\tw) \, dy - \tau  \int_{\tf(\Omega)} \nabla \operatorname{div}\tw \cdot (\zeta \times \operatorname{curl}\tv) \, dy  + \tilde{\lambda} \int_{\tf(\Omega)} (\tw \cdot \zeta) \operatorname{div}(\tv) \, dy\\
        &  + \tilde{\lambda} \int_{\tf(\Omega)} (\tv \cdot \zeta) \operatorname{div}(\tw) \, dy - \tau \, \ d \rvert_{\f=\tf} \int_\Omega \frac{N(\f,\tu,\te)}{\abs{\operatorname{det}(\operatorname{D}\f)}} \, dx.
    \end{split}
\end{equation*}

By the previous equality, \eqref{denne} and by observing that    
$$\frac{\operatorname{div}_x \left[ \tu (x) (\operatorname{D}\tf(x))^{-1} (\operatorname{D}\tf(x))^{-T} \operatorname{det}(\operatorname{D} \tf(x)) \right]}{\operatorname{det} (\operatorname{D} \tf(x))}= \operatorname{div}_y \tv (\tf(x)) = 0,$$ 
$$ \frac{\operatorname{div}_x \left[ \te (x) (\operatorname{D}\tf(x))^{-1} (\operatorname{D}\tf(x))^{-T} \operatorname{det}(\operatorname{D} \tf(x)) \right]}{\operatorname{det} (\operatorname{D} \tf(x))} = \operatorname{div}_y \tw (\tf(x)) =0,$$
for almost all $x \in \Omega$, we conclude. 
\end{proof}

In the case of 
 domain perturbations depending real analytically on one scalar parameter,  it is possible to prove a Rellich-Nagy-type theorem and describe 
 all the eigenvalues
splitting from a multiple eigenvalue of multiplicity $m$ by means of $m$ real-analytic functions.

\begin{thm}\label{nagy}Let $\Omega$ be a bounded domain in ${\mathbb{R}}^N$ of class ${\mathcal{C}}^{1,1}$.  Let  $\tilde \Phi \in {\mathcal{A}}_{\Omega}$ and $\{\Phi_{\epsilon}\}_{\epsilon\in {\mathbb{R}}} \subset {\mathcal{A}}_{\Omega}$ be a family depending real-analytically on $\epsilon$ such that $\Phi_0=\tilde \Phi$. Assume that  $\tilde\lambda$ is a Maxwell eigenvalue on $\tilde\Phi (\Omega)$ of multiplicity $m$,  $\tilde \lambda =\lambda_{n}[\tilde \Phi ]=\dots =\lambda_{n+m-1}[\tilde \Phi]$ for some $n\in {\mathbb{N}}$ and that      $\tilde{E}^{(1)},\dots, \tilde{E}^{(m)  }\in \xn (\di \,0, \tilde\Phi (\Omega))$  is an orthonormal basis of the eigenspace of $\tilde \lambda $, the orthonormality being taken in $(L^2(\tf (\Omega ))^3$.  Moreover, assume that $\tilde{E}^{(i)} \in  H^2(\tilde\Phi(\Omega))$ for all $i=1,\dots , m$.

Then there exists an open interval $I$ containing zero and $m$ real-analytic functions $g_{1},\dots , g_m$ from $I$ to ${\mathbb{R}}$ such that $\{\lambda_{n}[ \Phi_{\epsilon} ],\dots ,\lambda_{n+m-1}[\Phi_{\epsilon}]\}=\{g_1(\epsilon ), \dots  , g_m(\epsilon) \}$ for all $\epsilon \in I$. Moreover, the derivatives 
 $g'_1(0), \dots , g_m'(0)$ at zero of the functions $g_1, \dots , g_m$ coincide with the eigenvalues of the matrix 
\begin{equation}\label{nagymatrix}\left( \inttpos \left( \lambda_F [\tf]      \tilde{E}^{(i)}\cdot \tilde{E}^{(j)}   - \operatorname{curl} \tilde{E}^{(i)}\cdot  \operatorname{curl} \tilde{E}^{(j)}\right)  \zeta \cdot \nu  \, d\sigma  \right)_{i,j=1,\dots ,m}
\end{equation}
where  $\zeta = {\dot{\Phi } }_0\circ \tfi   $,    ${\dot{\Phi } }_0$ denotes the derivative at zero of the map $\epsilon \mapsto \Phi_{\epsilon} $.
\end{thm}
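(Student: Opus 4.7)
The plan is to reduce to a general Rellich--Nagy-type theorem for analytic families of compact self-adjoint operators with analytically varying scalar products (which is available in \cite{lala}), and then translate the conclusion back to our setting by means of Lemma~\ref{techlemma}.

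First, I would observe that since $\Phi_\epsilon$ depends real-analytically on $\epsilon$, both the operator $S_{\Phi_\epsilon}$ and the scalar product $\langle \cdot,\cdot\rangle_{\Phi_\epsilon}$ depend real-analytically on $\epsilon$ (they are obtained by compositions and inversions of real-analytic maps, exactly as in the proof of Theorem~\ref{symmetrican}). Moreover, the pulled-back vectors $\tilde u^{(i)} = (\tilde E^{(i)}\circ\tilde\Phi)\operatorname{D}\tilde\Phi$, $i=1,\dots,m$, form an orthonormal basis of the eigenspace of $S_{\tilde\Phi}$ associated with the eigenvalue $\tilde\mu = (\tilde\lambda+1)^{-1}$, with orthonormality in $L^2_{\tilde\Phi}(\Omega)$ (this follows directly from the definition of the Piola transform and formula \eqref{scalar} via a change of variables).

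Second, I would invoke the abstract Rellich--Nagy theorem from \cite{lala} for compact self-adjoint operators with analytically varying scalar product. This yields real-analytic functions $\mu_1(\epsilon),\dots,\mu_m(\epsilon)$ defined on an open interval $I$ containing zero, parametrizing the $m$ eigenvalues of $S_{\Phi_\epsilon}$ splitting from $\tilde\mu$, and asserts that the derivatives $\mu_i'(0)$ coincide with the eigenvalues of the symmetric matrix
\begin{equation*}
A_{ij} = \langle d\rvert_{\Phi=\tilde\Phi} S_\Phi[\dot\Phi_0]\tilde u^{(i)},\tilde u^{(j)}\rangle_{\tilde\Phi},\qquad i,j=1,\dots,m.
\end{equation*}
Setting $g_i(\epsilon) = \mu_i(\epsilon)^{-1} - 1$ produces the required real-analytic parametrization of $\lambda_{n}[\Phi_\epsilon],\dots,\lambda_{n+m-1}[\Phi_\epsilon]$. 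By the chain rule,
\begin{equation*}
g_i'(0) = -\mu_i(0)^{-2}\mu_i'(0) = -(\tilde\lambda+1)^2\,\mu_i'(0),
\end{equation*}
so the $g_i'(0)$ are the eigenvalues of the matrix $-(\tilde\lambda+1)^2 A$.

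Third, I would apply Lemma~\ref{techlemma} with $\tilde v = \tilde E^{(i)}$, $\tilde w = \tilde E^{(j)}$ and $\Psi = \dot\Phi_0$ to obtain
\begin{equation*}
A_{ij} = (\tilde\lambda+1)^{-2}\inttpos\bigl(\operatorname{curl}\tilde E^{(i)}\cdot\operatorname{curl}\tilde E^{(j)} - \tilde\lambda\,\tilde E^{(i)}\cdot\tilde E^{(j)}\bigr)(\zeta\cdot\nu)\,d\sigma,
\end{equation*}
with $\zeta=\dot\Phi_0\circ\tfi$. Multiplying by $-(\tilde\lambda+1)^2$ turns the sign and cancels the prefactor, producing precisely the matrix in \eqref{nagymatrix}, which completes the proof.

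The main technical obstacle is checking that the hypotheses of the abstract Rellich--Nagy theorem from \cite{lala} are met in our context of operators defined on a Hilbert space with varying scalar product; however, this is precisely the same machinery already exploited in the proof of Theorem~\ref{symmetrican} (via \cite[Thm.~2.30]{lala}), so the required analyticity, symmetry, and compactness of $S_\Phi$ translate verbatim. A secondary delicate point is ensuring that the pulled-back eigenvectors $\tilde u^{(i)}$ are indeed orthonormal in $L^2_{\tilde\Phi}(\Omega)$, which is used to identify the matrix entries produced by the abstract theorem with the quantities $A_{ij}$ above; this is a direct consequence of the definition of $\langle\cdot,\cdot\rangle_{\tilde\Phi}$ and of the change of variables $y=\tilde\Phi(x)$.
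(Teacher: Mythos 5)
Your proposal is correct and follows essentially the same route as the paper: apply the abstract Rellich--Nagy result of \cite{lala} to the analytic family $S_{\Phi_\epsilon}$, set $g_i(\epsilon)=\mu_i(\epsilon)^{-1}-1$, and convert the resulting matrix via Lemma~\ref{techlemma}, with the chain-rule factor $-(\tilde\lambda+1)^2$ cancelling the prefactor exactly as you compute. The only detail the paper adds is the preliminary observation (via Remark~\ref{risonanzarem} and continuity of the Dirichlet eigenvalues) that no resonance with the spurious Helmholtz spectrum occurs for small $\epsilon$, which is a minor point of setup rather than a gap in your argument.
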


\begin{proof} First of all, we note that by  our assumptions, $\tilde \lambda$ does not coincide with any of the eigenvalues of the Dirichlet Laplacian in $\Omega$ multiplied by $\tau$, see Remark~\ref{risonanzarem}, and  by the well-known continuity of the eigenvalues of the Dirichlet Laplacian, this implies that for all $\epsilon $ in a  sufficiently small neighbourhood of zero, the eigenvalues    $\{\lambda_{n}[ \Phi_{\epsilon} ],\dots ,\lambda_{n+m-1}[\Phi_{\epsilon}]\}$   satisfy the same property.  By applying  \cite[Theorem~2.27, Corollary~2.28]{lala}  to the family of operators   $S_{\Phi_{\epsilon} }$
we deduce that there exists an open interval $I$ containing zero and $m$ real-analytic functions $\tilde g_{1},\dots , \tilde g_m$ from $I$ to ${\mathbb{R}}$ such that $\{  (1+ \lambda_{n}[ \Phi_{\epsilon} ])^{-1},\dots ,(1+\lambda_{n+m-1}[\Phi_{\epsilon}] )^{-1} \}=\{\tilde g_1(\epsilon ), \dots  ,\tilde  g_m(\epsilon) \}$ for all $\epsilon \in I$; moreover, the derivatives    of those functions at zero are given by the eigenvalues of the matrix
$$  \left( \langle d\rvert_{\f=\tf} S_{\Phi}  [\Psi][\tu^{(i)}], \tu^{(j)} \rangle_{\tf}\right)_{i,j=1,\dots , m} \, , $$
where $\tu^{(i)}=\tilde{E}^{(i)}\circ \tilde\Phi$ for all $i=1,\dots ,m$. The proof follows by setting $g_i(\epsilon)= \tilde g_i^{-1}(\epsilon )-1$, $i=1, \dots , m$, and using  Lemma~\ref{techlemma}. 
\end{proof}

We conclude this section by proving an immediate consequence of our results, namely the  Rellich-Pohozaev identity for Maxwell eigenvalues. 

\begin{thm}[Rellich-Pohozaev Identity] Let $\Omega$ be a bounded domain in ${\mathbb{R}}^3$ of class ${\mathcal{C}}^{1,1}$. Let $\lambda $ be a Maxwell eigenvalue and $E\in \xn (\di, 0, \Omega ) $ a corresponding nontrivial eigenvector normalized in $(L^2(\Omega ))^3 $.  Assume that $E\in H^2(\Omega)$. Then
$$
\lambda =\frac{1}{2} \int_{\partial \Omega} \left(  \abs{\operatorname{curl} E}^2  -\lambda  \abs{E}^2 \right) \left( x \cdot \nu \right) \, d\sigma \, .
$$
\end{thm}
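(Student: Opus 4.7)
The plan is to apply Theorem~\ref{nagy} to the one-parameter analytic family of dilations $\Phi_{\epsilon}(x) = (1+\epsilon) x$, which for $|\epsilon|$ small lies in ${\mathcal{A}}_\Omega$ (it is smooth, injective, with $\det D\Phi_\epsilon = (1+\epsilon)^3 \neq 0$), and satisfies $\Phi_0 = \operatorname{id}$, $\dot{\Phi}_0(x) = x$.

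First I would verify by a direct change-of-variables computation that under dilation the Maxwell eigenvalues scale as
$$\lambda_n[\Phi_{\epsilon}(\Omega)] = (1+\epsilon)^{-2}\, \lambda_n[\Omega],$$
since if $E$ is a Maxwell eigenvector on $\Omega$ with eigenvalue $\lambda$, then $F(y) := E(y/(1+\epsilon))$ is a Maxwell eigenvector on $(1+\epsilon)\Omega$ with eigenvalue $\lambda/(1+\epsilon)^2$; the boundary condition $\nu \times F = 0$ and divergence-free condition are preserved since they are linear and dimensionless. Differentiating at $\epsilon = 0$ yields $\frac{d}{d\epsilon}\lambda_n[\Phi_{\epsilon}(\Omega)]\big|_{\epsilon = 0} = -2\lambda_n[\Omega]$; in particular, for an eigenvalue $\lambda$ of multiplicity $m$, each of the $m$ real-analytic branches $g_1(\epsilon),\dots,g_m(\epsilon)$ provided by Theorem~\ref{nagy} equals $\lambda/(1+\epsilon)^2$, so $g_i'(0) = -2\lambda$ for $i = 1,\dots, m$.

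Next, since all eigenvalues of the matrix \eqref{nagymatrix} equal $-2\lambda$, and the matrix is symmetric, the matrix must be $-2\lambda \, I_m$. With $\tilde{\Phi} = \operatorname{id}$ we have $\zeta = \dot{\Phi}_0 \circ \tilde{\Phi}^{-1}(x) = x$; hence the diagonal entries read
$$\int_{\partial \Omega}\bigl(\lambda\, \abs{E^{(i)}}^2 - \abs{\operatorname{curl} E^{(i)}}^2\bigr)(x \cdot \nu)\, d\sigma = -2\lambda, \qquad i = 1,\dots,m,$$
for any orthonormal basis $E^{(1)},\dots,E^{(m)}$ of the eigenspace. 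Given an arbitrary $L^2$-normalized eigenvector $E$ associated with $\lambda$, I would complete it to such an orthonormal basis with $E^{(1)} = E$; the $i=1$ identity then rearranges to give the claimed Rellich--Pohozaev formula.

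I do not expect any serious obstacle here: the only things to check carefully are that $\Phi_\epsilon \in {\mathcal{A}}_\Omega$ and depends analytically on $\epsilon$ (immediate), and that the scaling computation for $\lambda_n[\Phi_\epsilon(\Omega)]$ holds. The $H^2$ regularity hypothesis on $E$ is the same as the one required to apply Theorem~\ref{nagy}, and is automatically preserved under dilation so the hypothesis is satisfied for all eigenvectors of $\Phi_\epsilon(\Omega)$.
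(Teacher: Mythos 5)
Your proposal is correct and follows essentially the same route as the paper: the paper's proof likewise applies Theorem~\ref{nagy} to the dilation family $\Phi_{\epsilon}=I+\epsilon I$, uses the exact scaling $\lambda_n[\Phi_{\epsilon}]=(1+\epsilon)^{-2}\lambda$ to get the derivative $-2\lambda$, observes that the matrix \eqref{nagymatrix} is a multiple of the identity, and equates the two expressions for the derivative. Your added remarks (symmetry of the matrix forcing it to be $-2\lambda I_m$, completing $E$ to an orthonormal basis) are correct fillings-in of details the paper states more tersely.
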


\begin{proof}
Assume that $\lambda =\lambda_n(\Omega )=\dots =\lambda_{n+m-1}(\Omega)$ is a Maxwell eigenvalue with multiplicity $m$ (with the understanding that the corresponding $m$-dimensional eigenspace is made of Maxwell eigenvectors, see Remark~\ref{risonanzarem}).  
 We consider a family of dilations  $(1+\epsilon )\Omega$  of $\Omega$ which can viewed as a family of  diffeomorphisms  $\Phi_{\epsilon }=I+\epsilon I$, $\epsilon \in {\mathbb{R}}$.   It is obvious that   $\lambda _{n+i}[\Phi_{\epsilon } ]= \left( 1+\epsilon \right) ^{-2}\lambda  $ for all $i=0, \dots , m-1$.
 In particular, the domain perturbation under consideration preserves the multiplicity of $\lambda $ and the matrix \eqref{nagymatrix} is a multiple of the identity. 
By  differentiating with respect to $\epsilon $ and applying Theorem~\ref{nagy} with $\Phi = \zeta =I$,  we obtain
\begin{equation}\label{rellich1}
\left. \frac{d}{d\epsilon }\lambda _{n}[\Phi _{\epsilon}]\right\vert
_{\epsilon=0}=     \int_{\partial \Omega} \left( \lambda  \abs{E}^2  - \abs{\operatorname{curl} E}^2      \right) \left( x \cdot \nu \right) \, d\sigma ,
\end{equation}
where the given normalized eigenvector $E$ is serving as an element of the orthonormal basis of the eigenspace. 
If we differentiate the equality $\lambda _{n}[\Phi_{\epsilon } ]= \left( 1+\epsilon \right) ^{-2}\lambda  $   with respect to $\epsilon$, we obtain
\begin{equation}\label{rellich2}
\left. \frac{d}{d\epsilon }\lambda _{n}[\Phi _{\epsilon}]\right\vert_{\epsilon=0}=\left. \frac{d}{dt}\left( \left( 1+\epsilon \right) ^{-2}\lambda   \right)\right\vert _{t=0}=-2\lambda .
\end{equation}

By  combining \eqref{rellich1} and  \eqref{rellich2} we conclude.
\end{proof}

\section{Criticality for symmetric functions of the eigenvalues}
\label{oversec}

We denote by  $\mathcal{V}[\f]$ the measure of $\f(\Omega)$, that is 
\begin{equation}\label{eqvolume}
\mathcal{V}[\f] \colon = \int_{\f(\Omega)}dx = \into \abs{\operatorname{det}\operatorname{D}\f} \, dx,
\end{equation}
and by   $\mathcal {P}[\Phi]$ the perimeter of $\f(\Omega)$ that is 
\begin{equation}\label{eqperimeter}
\mathcal {P}[\Phi ]:= \int_{\partial \f(\Omega)}d\sigma  =\int_{\partial \Omega } \left|\nu (\operatorname{D} \Phi )^{-1}   \right| \left|\mathrm{det}\operatorname{D} \Phi\right|d\sigma \, .
\end{equation}

We are interested in extremum problems of the type
\begin{equation} \label{extrproblems}
\min_{\mathcal{V}[\f]={\rm const.}} \Lambda_{F,s}[\f]\  \text{ or } \max_{\mathcal{V}[\f]={\rm const.}} \Lambda_{F,s}[\f]  ,\end{equation}
as well as problems of the type
\begin{equation} \label{extrproblemsper}
\min_{\mathcal {P}[\f]={\rm const.}} \Lambda_{F,s}[\f]\  \text{ or } \max_{\mathcal{P}[\f]={\rm const.}} \Lambda_{F,s}[\f]  .\end{equation}

It is convenient to recall the following lemma from \cite{la2014}, where ${\mathcal{H}}= \di \nu$ denotes the mean curvature of $\Omega $, that is, the sum of the principal curvatures.

\begin{lem}
Let $\Omega$ be a bounded domain in $\mathbb{R}^N$ of class ${\mathcal{C}}^{1,1}$. Then the maps $\mathcal{V}$, $\mathcal{P}$ from $\mathcal{A}_\Omega$ to $\mathbb{R}$ defined in \eqref{eqvolume} are real-analytic. Moreover, the differentials of $\mathcal{V}$ and $\mathcal{P}$ at any point $\tf \in \mathcal{A}_\Omega$ are  given by the formulas 
    \begin{equation}\label{diffvol}
        d\rvert_{\f=\tf} \mathcal{V}[\f] [\psi] = \inttpos  (\Psi \circ \tfi (x)) \cdot \nu (x)  \, d\sigma , 
    \end{equation}
    and 
    \begin{equation}\label{diffper}
     d\rvert_{\f=\tf} \mathcal{P}[\f] [\psi]   = \int_{\partial\tilde  \Phi (\Omega )}{\mathcal H}(x)\,  (\Psi \circ \tfi  (x)) \cdot \nu(x)   d\sigma ,
    \end{equation}
    for all $\Psi \in \mathcal{C}^{1,1}(\overline{\Omega}, \mathbb{R}^3)$.
\end{lem}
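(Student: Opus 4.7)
The plan is to establish real-analyticity first and then compute the two differentials separately, the volume case being essentially calculus while the perimeter case requires either shape-calculus identities or a careful reduction to the tangential divergence theorem.

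For real-analyticity, observe that $\mathcal{V}[\f]$ is, up to a sign that is locally constant on $\mathcal{A}_{\Omega}$ by continuity of $\det \operatorname{D}\f$ in $\f$, the integral of a polynomial in the components of $\operatorname{D}\f$; hence $\mathcal{V}$ is the composition of a continuous linear map $\operatorname{D}\colon \mathcal{C}^{1,1}\to \mathcal{C}^{0,1}$ with a real-analytic polynomial evaluation and integration, so real-analytic. For $\mathcal{P}$, the same argument applies once one notes that on $\mathcal{A}_{\Omega}$ the matrix $\operatorname{D}\f$ is invertible, so $(\operatorname{D}\f)^{-1}$ depends analytically on $\f$ (by Cramer's rule / Neumann series), the map $M\mapsto |\nu M|$ is analytic on the open set where the argument is nonzero (this is guaranteed because $\operatorname{det}\operatorname{D}\f\neq 0$), and therefore the integrand in \eqref{eqperimeter} is analytic.

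For \eqref{diffvol}, I would differentiate under the integral. By Jacobi's formula, $\frac{d}{dt}\det(\operatorname{D}\tf + t\operatorname{D}\Psi)|_{t=0} = \det(\operatorname{D}\tf)\,\operatorname{tr}\bigl((\operatorname{D}\tf)^{-1}\operatorname{D}\Psi\bigr)$, whose sign matches that of $\det \operatorname{D}\tf$; then setting $\zeta=\Psi\circ\tfi$, the chain rule gives $\operatorname{D}_y\zeta(y) = \operatorname{D}\Psi(x)\,(\operatorname{D}\tf(x))^{-1}$ at $y=\tf(x)$, so by the cyclic property of the trace $\operatorname{tr}((\operatorname{D}\tf)^{-1}\operatorname{D}\Psi) = \operatorname{div}_y\zeta \circ \tf$. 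Changing variables $y=\tf(x)$ turns $|\operatorname{det}\operatorname{D}\tf|$ into the Jacobian factor, yielding $d\mathcal{V}[\Psi]=\inttpo \operatorname{div}_y\zeta \, dy$, and a single application of the divergence theorem on $\tf(\Omega)$ produces \eqref{diffvol}.

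For \eqref{diffper}, the cleanest route I would take is to bypass direct differentiation of $|\nu(\operatorname{D}\f)^{-1}||\det \operatorname{D}\f|$ and instead use the intrinsic shape-derivative identity: if $\Omega_t=\f_t(\Omega)$ with $\f_0=\tf$ and velocity $\zeta=\dot\f_0\circ\tfi$ on $\partial\tf(\Omega)$, then $\frac{d}{dt}|_{t=0}\mathcal{P}[\f_t] = \int_{\partial \tf(\Omega)} \operatorname{div}_{\scriptscriptstyle \Gamma}\zeta \, d\sigma$, where $\operatorname{div}_{\scriptscriptstyle \Gamma}$ is the tangential divergence. Splitting $\zeta=(\zeta\cdot\nu)\nu+\zeta_T$ and using that on a closed surface $\int_{\partial\tf(\Omega)}\operatorname{div}_{\scriptscriptstyle\Gamma}\zeta_T \, d\sigma=0$ together with $\operatorname{div}_{\scriptscriptstyle\Gamma}\nu=\mathcal{H}$, one gets \eqref{diffper} immediately. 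To prove the shape-derivative identity itself, I would pass through the pullback: write $\mathcal{P}[\f_t]=\int_{\partial\Omega}|\nu(\operatorname{D}\f_t)^{-1}||\det \operatorname{D}\f_t|\, d\sigma$, differentiate in $t$ using Jacobi's formula for $\det$ and the identity $\frac{d}{dt}M_t^{-1}=-M_t^{-1}\dot M_t M_t^{-1}$, then change variables back to $\partial \tf(\Omega)$.

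The main obstacle is the perimeter case: after differentiating the pulled-back integrand, one obtains a sum of terms involving $\nu$, $(\operatorname{D}\tf)^{-1}$ and $\operatorname{D}\Psi$, and the computation to recognise this combination as the mean curvature of $\partial\tf(\Omega)$ contracted with $\zeta\cdot\nu$ is the delicate bookkeeping step. This is precisely where invoking the tangential-divergence viewpoint (or equivalently the Hadamard formula of \cite{la2014}) is most efficient, since it isolates $\mathcal{H}$ as $\operatorname{div}_{\scriptscriptstyle\Gamma}\nu$ without having to unfold all the Jacobian terms by hand.
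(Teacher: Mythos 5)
Your argument is correct. Note, however, that the paper does not prove this lemma at all: it is simply recalled from \cite{la2014}, so there is no in-paper proof to compare against. Your route --- local constancy of the sign of $\operatorname{det}\operatorname{D}\f$ plus Jacobi's formula and the divergence theorem for \eqref{diffvol}, and the tangential Jacobian $|\nu(\operatorname{D}\f)^{-1}||\operatorname{det}\operatorname{D}\f|$ reduced via $\operatorname{div}\zeta-\nu\operatorname{D}\zeta\,\nu^{T}=\dig\zeta$ and the tangential divergence theorem for \eqref{diffper} --- is the standard shape-calculus derivation and correctly supplies the omitted details; the only point deserving an explicit word is that for a ${\mathcal{C}}^{1,1}$ boundary one has $\mathcal{H}\in L^{\infty}(\partial\tf(\Omega))$ only, so the tangential divergence theorem and \eqref{diffper} are to be understood in the a.e./integral sense.
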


 For $\alpha \in ]0, +\infty[$, we set 
    $$V(\alpha  ) \colon = \set{\f \in \mathcal{A}_\Omega : \mathcal{V}(\f) = \alpha  },\ \ P(\alpha ) \colon = \set{\f \in \mathcal{A}_\Omega : \mathcal{P}(\f) =\alpha   }\, .$$
   
Keeping in mind the Lagrange Multiplier Theorem (which holds also in infinite dimensional spaces), we note that if $\tilde \Phi \in \mathcal{A}_\Omega [F]$
is a minimizer/maximizer in \eqref{extrproblems} or \eqref{extrproblemsper} respectively, then it is a critical point for the function $\f \mapsto  \Lambda_{F,s}[\f]$ under the constraint $\f \in V(\tilde \alpha   )$ or the constraint   $\f \in P(\tilde \beta   )$ respectively, where $\tilde \alpha =  {\mathcal{V}}(\tilde\Phi    )$  and $\tilde \beta = {\mathcal{P}}(\tilde\Phi    ) $, which means  that 
\begin{equation}\label{kerker}
{\rm Ker }\, d\rvert_{\f=\tf} \mathcal{V}[\f] \subset {\rm Ker }\, d\rvert_{\f=\tf}  \Lambda_{F,s} [\f] ,
\end{equation}
or
\begin{equation}\label{kerkerper}
{\rm Ker }\, d\rvert_{\f=\tf} \mathcal{P}[\f] \subset {\rm Ker }\, d\rvert_{\f=\tf}  \Lambda_{F,s} [\f] ,
\end{equation}
respectively. 

The following theorem provides a characterization of those points $\tilde \Phi \in  \mathcal{A}_\Omega [F]$ satisfying \eqref{kerker} or \eqref{kerkerper}.

\begin{thm}\label{over}  Let $\Omega$ be a bounded domain in $\mathbb{R}^N$ of class ${\mathcal{C}}^{1,1}$.
Let  F be a non-empty finite subset of $\mathbb{N}$ and $\tilde \alpha  \in ]0,+\infty[$. The following statements hold:
\begin{itemize}
\item[(i)]
Assume that  $\tf \in  \Theta_{\Omega}[F]\cap V( \tilde\alpha )$ is such that $\lambda_j[\tf]$  are  Maxwell eigenvalues with  common value  $\lambda_F[\tf]$ for all $j \in F$. Assume that $\tilde{E}^{(1)},\dots, \tilde{E}^{(\abs{F})}\in  X_N(\di\, 0, \tf(\Omega))  $ is an orthonormal basis of the eigenspace corresponding to  $\lambda_F[\tf]$ and that those eigenvectors belong to $H^2(\tilde\Phi (\Omega))$.

For $s=1, \dots, \abs{F}$ the function $\tf$ is a critical point for $\Lambda_{F,s}$ with volume constraint $\f \in V(\tilde\alpha ) $  (that is, condition \eqref{kerker} is satisfied)  if and only if there exists a constant $c \in \mathbb{R}$ such that 
\begin{equation}\label{condcrit}
   \sum_{l=1}^{\abs{F}} \left( \lambda_F [\tf] \abs{\tilde{E}^{(l)}}^2 - \abs{\operatorname{curl} \tilde{E}^{(l)}}^2 \right) = c, \qquad {\rm on}\ \partial\tf(\Omega).
\end{equation}
\item[(ii)]
Assume that $\tf \in  \Theta_{\Omega}[F]\cap P( \tilde \alpha  )$ is such that $\lambda_j[\tf]$  are  Maxwell eigenvalues with  common value  $\lambda_F[\tf]$ for all $j \in F$. Assume  that $\tilde{E}^{(1)},\dots, \tilde{E}^{(\abs{F})}  \in  X_N(\di\, 0, \tf(\Omega))$    is an orthonormal basis of the eigenspace corresponding to  $\lambda_F[\tf]$ and that those eigenvectors belong to $H^2(\tilde\Phi (\Omega))$.
  For $s=1, \dots, \abs{F}$ the function $\tf$ is a critical point for $\Lambda_{F,s}$ with perimeter constraint $\f \in P( \tilde \alpha)$  (that is, condition \eqref{kerkerper} is satisfied)  if and only if there exists  a constant $c \in \mathbb{R}$ such that 
\begin{equation}\label{condcritper}
   \sum_{l=1}^{\abs{F}} \left( \lambda_F [\tf] \abs{\tilde{E}^{(l)}}^2 - \abs{\operatorname{curl} \tilde{E}^{(l)}}^2 \right) = c {\mathcal{H}}, \qquad {\rm on}\ \partial\tf(\Omega).
\end{equation}
\end{itemize}
\end{thm}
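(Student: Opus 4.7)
The plan is to combine the Hadamard-type formula \eqref{hadamard} with the expressions \eqref{diffvol} and \eqref{diffper} for the differentials of the volume and perimeter functionals, and then read the kernel inclusions \eqref{kerker}--\eqref{kerkerper} as pointwise identities on $\partial\tf(\Omega)$ via a standard duality argument. Set
\[
f := \sum_{l=1}^{|F|}\Bigl(\lambda_{F}[\tf]\,|\tilde E^{(l)}|^{2}-|\operatorname{curl}\tilde E^{(l)}|^{2}\Bigr), \qquad K_{s}:=\binom{|F|-1}{s-1}(\lambda_{F}[\tf])^{s-1}>0,
\]
and, for $\Psi\in\mathcal{C}^{1,1}(\overline\Omega,\mathbb{R}^{3})$, write $\zeta:=\Psi\circ\tfi$ and $\phi:=(\zeta\cdot\nu)|_{\partial\tf(\Omega)}$. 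Then Theorem~\ref{symmetrican} reads $d|_{\f=\tf}\Lambda_{F,s}[\Psi]=K_{s}\inttpos f\,\phi\,d\sigma$, while \eqref{diffvol} and \eqref{diffper} give $d|_{\f=\tf}\mathcal{V}[\Psi]=\inttpos\phi\,d\sigma$ and $d|_{\f=\tf}\mathcal{P}[\Psi]=\inttpos\mathcal{H}\,\phi\,d\sigma$.

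The key intermediate step is to observe that, as $\Psi$ varies over $\mathcal{C}^{1,1}(\overline\Omega,\mathbb{R}^{3})$, the normal trace $\phi$ covers all of $\mathcal{C}^{1,1}(\partial\tf(\Omega))$. Indeed, given any such $\phi$, I would extend it to $\tilde\phi\in\mathcal{C}^{1,1}(\overline{\tf(\Omega)})$ and extend $\nu$ to a $\mathcal{C}^{1,1}$ vector field $\tilde\nu$ in a tubular neighbourhood of $\partial\tf(\Omega)$ (possible by the $\mathcal{C}^{1,1}$-regularity of the boundary); the product $\tilde\phi\,\tilde\nu$, cut off and then pulled back via $\tf$, produces an admissible $\Psi$ whose normal trace equals the prescribed $\phi$. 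Since $\mathcal{C}^{1,1}(\partial\tf(\Omega))$ is dense in $L^{2}(\partial\tf(\Omega))$, the boundary identities derived below propagate to all $\phi\in L^{2}(\partial\tf(\Omega))$.

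For part (i), the inclusion \eqref{kerker} now translates to: $\inttpos\phi\,d\sigma=0$ implies $\inttpos f\,\phi\,d\sigma=0$ for every $\phi\in L^{2}(\partial\tf(\Omega))$. Testing with $\phi=\psi-|\partial\tf(\Omega)|^{-1}\inttpos\psi\,d\sigma$ for arbitrary $\psi\in L^{2}(\partial\tf(\Omega))$ yields $\inttpos(f-c)\psi\,d\sigma=0$ with $c:=|\partial\tf(\Omega)|^{-1}\inttpos f\,d\sigma$, so $f\equiv c$ on $\partial\tf(\Omega)$, which is \eqref{condcrit}. The converse is immediate: if $f\equiv c$ then $d|_{\f=\tf}\Lambda_{F,s}=K_{s}c\,d|_{\f=\tf}\mathcal{V}$, so \eqref{kerker} is trivial. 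Part (ii) is entirely analogous: \eqref{kerkerper} becomes $\inttpos\mathcal{H}\,\phi\,d\sigma=0\Rightarrow\inttpos f\,\phi\,d\sigma=0$, and testing with $\phi=\psi-\bigl(\inttpos\psi\,\mathcal{H}\,d\sigma\bigr)\bigl(\inttpos\mathcal{H}^{2}\,d\sigma\bigr)^{-1}\mathcal{H}$, which is well-defined because $\mathcal{H}\not\equiv 0$ on the closed $\mathcal{C}^{1,1}$-surface $\partial\tf(\Omega)$, forces $f=c\,\mathcal{H}$ on $\partial\tf(\Omega)$ for a suitable constant $c\in\mathbb{R}$.

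The only non-routine step is the surjectivity claim for the normal trace map $\Psi\mapsto(\Psi\circ\tfi)\cdot\nu$, which rests on the $\mathcal{C}^{1,1}$-extension of boundary functions and of the normal field; this is exactly the point where the regularity hypothesis on $\Omega$ enters decisively. Once Theorem~\ref{symmetrican} is in place, everything else reduces to elementary linear algebra in $L^{2}(\partial\tf(\Omega))$.
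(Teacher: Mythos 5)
Your proof is correct and follows essentially the same route as the paper: combine the Hadamard formula \eqref{hadamard} with the differentials \eqref{diffvol}, \eqref{diffper}, reduce the kernel inclusions to proportionality of the two boundary functionals, and conclude by the Fundamental Lemma of the Calculus of Variations. The only difference is that you spell out two points the paper leaves implicit — the surjectivity (up to density) of the normal-trace map $\Psi\mapsto(\Psi\circ\tfi)\cdot\nu$ and the explicit test functions replacing the "standard linear algebra" step — which is a welcome amplification rather than a deviation.
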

\begin{proof}
It suffices to observe that by standard linear algebra condition \eqref{kerker} or condition \eqref{kerkerper} is satisfied if and only if there exists a constant $l\in {\mathbb{R}}$ such that 
$ d\rvert_{\f=\tf}  \Lambda_{F,s} [\f]=  ld\rvert_{\f=\tf} \mathcal{V}[\f]$ or $ d\rvert_{\f=\tf}  \Lambda_{F,s} [\f]=  ld\rvert_{\f=\tf} \mathcal{P}[\f]$ respectively . By formulas \eqref{hadamard}, \eqref{diffvol},  \eqref{diffper}  and the Fundamental Lemma of the Calculus of Variations, 
we conclude. 
\end{proof}

In the next theorem, we show that balls are critical domains.

\begin{thm}\label{balls}
Let $\Omega$ be a bounded domain in ${\mathbb{R}}^3$ of class ${\mathcal{C}}^{1,1}$.  Let $\tf\in {\mathcal {A}}_{\Omega}$ be such that $\tf(\Omega)$ is a ball. Let $\tilde{\lambda}$ be a Maxwell eigenvalue  in $\tf(\Omega)$ with an eigenspace of dimension $m$ in   $\xn (\di\, 0, \Tilde \Phi (\Omega))$. Assume that $\lambda_{n-1}[\tilde\Phi (\Omega)]< \lambda_{n}[\tilde\Phi (\Omega)]= \dots =\lambda_{n+m-1}[\tilde\Phi (\Omega)]<\lambda_{n+m}[\tilde\Phi (\Omega)]$ for some $n\in \N$, and let $F=\{n,\dots , n+m-1\}$. Then $\tf$ is both a critical point for    $\Lambda_{F,s} $ with volume constraint $\f\in  V(\mathcal{V}(\tf))$ and  a critical point for    $\Lambda_{F,s} $ with perimeter constraint $\f\in  P(\mathcal{P}(\tf))$, for all $s=1,\dots, \abs{F}$.
\end{thm}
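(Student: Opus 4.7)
The plan is to reduce the criticality statement to the overdetermined boundary conditions \eqref{condcrit} and \eqref{condcritper} via Theorem~\ref{over}, and then verify them by a symmetry argument exploiting the $SO(3)$-invariance of the ball.

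First I would note that since $\tf(\Omega)$ is a ball, its boundary is smooth of class $C^\infty$, so by Remark~\ref{regularity} the Maxwell eigenvectors $\tilde E^{(1)},\dots,\tilde E^{(m)}$ automatically belong to $H^2(\tf(\Omega))$, allowing us to invoke Theorem~\ref{over}. Since the mean curvature $\mathcal{H}$ of a sphere is a (nonzero) constant, both overdetermined conditions \eqref{condcrit} and \eqref{condcritper} amount to the same requirement: that the function
\begin{equation*}
G(x) := \sum_{l=1}^{m}\Bigl(\lambda_F[\tf]\, |\tilde E^{(l)}(x)|^2 - |\operatorname{curl}\tilde E^{(l)}(x)|^2\Bigr)
\end{equation*}
be constant on $\partial\tf(\Omega)$. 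Hence it suffices to prove the single statement that $G$ is constant on the sphere.

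The core of the argument is rotational symmetry. For each $R\in SO(3)$, define the pullback action on vector fields by $(\tau_R E)(x) := R\,E(R^T x)$. Standard identities give $\operatorname{curl}(\tau_R E) = \tau_R(\operatorname{curl} E)$, $\operatorname{div}(\tau_R E) = (\operatorname{div} E)\circ R^T$, and the boundary condition $\nu\times E=0$ on $\partial\tf(\Omega)$ is preserved because the sphere is mapped to itself by $R$ and outer normals transform as $\nu(x) = R\,\nu(R^T x)$. Therefore $\tau_R$ maps the Maxwell eigenspace $V_{\tilde\lambda}\subset X_N(\operatorname{div} 0,\tf(\Omega))$ into itself; moreover $\tau_R$ is an isometry of $(L^2(\tf(\Omega)))^3$. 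Consequently, in the orthonormal basis $\{\tilde E^{(l)}\}$ the action of $\tau_R$ on $V_{\tilde\lambda}$ is represented by an orthogonal matrix $U(R)=(U_{lk}(R))\in O(m)$, i.e.\ $\tau_R \tilde E^{(l)} = \sum_k U_{lk}(R)\,\tilde E^{(k)}$ for a.e.\ $x$.

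Writing out $\sum_l |\tau_R \tilde E^{(l)}(x)|^2$ in two ways, on one hand as $\sum_l |\tilde E^{(l)}(R^T x)|^2$ (using orthogonality of $R$), and on the other as $\sum_{k,j}\bigl(\sum_l U_{lk}(R)U_{lj}(R)\bigr)\tilde E^{(k)}(x)\cdot\tilde E^{(j)}(x) = \sum_k |\tilde E^{(k)}(x)|^2$, I conclude that $x\mapsto \sum_l |\tilde E^{(l)}(x)|^2$ is $SO(3)$-invariant, hence radial, hence constant on the sphere $\partial\tf(\Omega)$. The identical argument, applied to $\operatorname{curl}\tilde E^{(l)}$ in place of $\tilde E^{(l)}$ (which is legitimate because $\operatorname{curl}$ intertwines the action $\tau_R$), shows $x\mapsto \sum_l |\operatorname{curl}\tilde E^{(l)}(x)|^2$ is also constant on $\partial\tf(\Omega)$. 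Thus $G$ is constant on $\partial\tf(\Omega)$, both conditions \eqref{condcrit} and \eqref{condcritper} follow, and Theorem~\ref{over} yields the conclusion.

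The most delicate step is the verification that $\tau_R$ preserves all three defining properties of a Maxwell eigenvector (eigenvalue equation, divergence-free condition, tangential boundary condition); once that is secured, the passage from invariance of the eigenspace to pointwise invariance of the two sums is a clean consequence of the basis-independence of $\sum_l |\tilde E^{(l)}(x)|^2$ under orthogonal change of basis, and I anticipate no further obstacles.
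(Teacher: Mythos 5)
Your proposal is correct and follows essentially the same route as the paper: the published proof likewise reduces the claim to conditions \eqref{condcrit} and \eqref{condcritper} via Theorem~\ref{over} and then verifies them through Lemma~\ref{radial}, which establishes the radiality of $\sum_l|\tilde E^{(l)}|^2$ and $\sum_l|\operatorname{curl}\tilde E^{(l)}|^2$ by exactly your orthogonal-conjugation argument (the paper works with $O_3(\mathbb{R})$ and the covariant transform $u=(E\circ A)A$, but the content is identical). The only detail worth recording explicitly is that the ball may be assumed centred at the origin, so that ``radial'' indeed implies ``constant on the boundary sphere.''
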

\begin{proof}
Conditions \eqref{condcrit} and  \eqref{condcritper}   are satisfied thanks to  Lemma~\ref{radial} below.
\end{proof} 
\begin{lem}\label{radial}
Let $B$ be a ball in $\mathbb{R}^3$ centered at zero. Let $\lambda$ be a Maxwell eigenvalue  in $B$  with an eigenspace of dimension $m$ in   $\xn (\di\, 0, B)$ and  let $E^{(1)},\dots,E^{(m) }$ be a corresponding  orthonormal basis. Then, the functions 
\begin{equation*}
    \sum_{l=1}^{m  } \abs{E^{(l)}}^2, \ \ \sum_{l=1}^{m  } \abs{\operatorname{curl} E^{(l)}}^2
\end{equation*}
are radial.
\end{lem}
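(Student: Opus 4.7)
The plan is to exploit the rotational symmetry of the ball. For any rotation $R \in SO(3)$, define the pushed-forward vector field
\begin{equation*}
\tilde{E}(x) := R\, E(R^T x), \qquad x \in B.
\end{equation*}
The first step is to verify that $\tilde{E}$ lies in the same eigenspace. This rests on the identity $\operatorname{curl}\tilde{E}(x) = R(\operatorname{curl} E)(R^T x)$, valid for $R \in SO(3)$, which follows from the $\epsilon$-identity $R_{ip}R_{lq}R_{mr}\xi_{pqr} = (\det R)\,\xi_{ilm}$. Iterating gives $\operatorname{curl}^2 \tilde{E}(x) = R(\operatorname{curl}^2 E)(R^T x) = \lambda\,\tilde{E}(x)$, and $\operatorname{div}\tilde{E}(x) = (\operatorname{div} E)(R^T x) = 0$. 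The boundary condition is preserved because for $x \in \partial B$ one has $R^T x/|x| = \nu(R^T x)$ and, using $Ra \times Rb = R(a\times b)$,
\begin{equation*}
\nu(x)\times\tilde{E}(x) = R\bigl(\nu(R^T x)\times E(R^T x)\bigr) = 0.
\end{equation*}

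The second step is to encode this invariance on the basis. Since $\tilde{E}^{(l)}$ belongs to the eigenspace, one may write
\begin{equation*}
\tilde{E}^{(l)}(x) = \sum_{k=1}^{m} a_{lk}(R)\, E^{(k)}(x),
\end{equation*}
and the matrix $A(R) = (a_{lk}(R))$ is orthogonal: indeed, since $R$ is isometric and $|Rv|=|v|$, the change of variable $y = R^T x$ shows that $E \mapsto \tilde{E}$ is an $L^2$-isometry, hence maps orthonormal bases to orthonormal bases.

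The third step is an algebraic computation. For $f(x) := \sum_{l=1}^{m}|E^{(l)}(x)|^2$ one has, using $|RV|=|V|$,
\begin{equation*}
f(R^T x) = \sum_{l=1}^{m} |E^{(l)}(R^T x)|^2 = \sum_{l=1}^{m} |\tilde{E}^{(l)}(x)|^2 = \sum_{l,k,k'} a_{lk}(R)a_{lk'}(R)\, E^{(k)}(x)\cdot E^{(k')}(x),
\end{equation*}
and orthogonality of $A(R)$ collapses this to $\sum_{k} |E^{(k)}(x)|^2 = f(x)$. The identical argument applies to $g(x) := \sum_{l=1}^{m}|\operatorname{curl} E^{(l)}(x)|^2$, because $\operatorname{curl}\tilde{E}^{(l)} = \sum_k a_{lk}(R)\operatorname{curl} E^{(k)}$ involves the very same orthogonal matrix, and $|R(\operatorname{curl} E^{(l)})(R^T x)| = |(\operatorname{curl} E^{(l)})(R^T x)|$. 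Hence $f(R^T x) = f(x)$ and $g(R^T x) = g(x)$ for every $R \in SO(3)$; since $SO(3)$ acts transitively on each sphere centered at the origin, $f$ and $g$ are radial.

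No genuine obstacle arises: the only point requiring care is the verification that $R\mapsto A(R)$ lands in $O(m)$, which is the standard observation that the natural covariant $SO(3)$-action $E\mapsto R E(R^T\cdot)$ is a unitary representation on $(L^2(B))^3$ leaving the eigenspace invariant. The conclusion then follows purely from the invariance of the Hilbert--Schmidt-type sums $\sum_l |E^{(l)}|^2$ and $\sum_l |\operatorname{curl} E^{(l)}|^2$ under orthogonal changes of basis.
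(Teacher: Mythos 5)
Your proof is correct and follows essentially the same route as the paper's: push the orthonormal basis forward under rotations of the ball, observe that the eigenspace is invariant and the transition matrix lies in $O(m)$, and conclude that the sums $\sum_l |E^{(l)}|^2$ and $\sum_l |\operatorname{curl}E^{(l)}|^2$ are unchanged. The only cosmetic differences are that you verify the eigenvector property via the direct transformation law $\operatorname{curl}\tilde{E}(x)=R(\operatorname{curl}E)(R^Tx)$ (the paper instead uses $\operatorname{curl}\operatorname{curl}=\operatorname{grad}\operatorname{div}-\Delta$) and that you make explicit the preservation of the boundary condition and the $L^2$-isometry, both of which the paper leaves implicit.
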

\begin{proof}
Let $E$ be an eigenvector of problem \eqref{main} with eigenvalue $\lambda$. Take an orthogonal matrix $A \in O_3(\mathbb{R})$ and consider the vector field $u$ defined by  $ u  = (E \circ A) A$. 
Then
\begin{equation*}
    \operatorname{D}u (x)= A^T \operatorname{D}E (Ax) \, A.
\end{equation*}
Note that in this proof, for simplicity,  we denote by   $Ax$  the row vector   $(Ax^T)^T$ which is identified with the image of $x$ via the linear transformation associated with the matrix $A$.
Thus
\begin{equation*}
    \operatorname{div}u~(x) = \operatorname{Tr}(\operatorname{D}u (x)) = \operatorname{Tr}(A^T \operatorname{D}E (Ax) \, A) = \operatorname{Tr}(\operatorname{D}E(x)) = \operatorname{div}E~ (Ax)\, .
\end{equation*}
Moreover, we have
\begin{align*}
 &   \Delta u_i (x)  = \frac{\partial}{\partial x_k} \frac{\partial u_i}{\partial x_k}(x) =\frac{\partial}{\partial x_k}[A_{rk} (\partial_r E_j) (Ax) A_{ji}]  \\
    &= A_{rk} (\partial_s \partial_r E_j) (Ax) A_{sk} A_{ji}= (\partial_s \partial_r E_j)(Ax) \delta_{rs} A_{ji} = \partial^2_r E_j (Ax) A_{ji} = \left[ \Delta E (Ax) \, A \right]_i\, .
\end{align*}
Therefore the vector laplacian of $u$ satisfies
\begin{equation*}
    \Delta u (x) = \Delta E (Ax) \, A  \, .
\end{equation*}
Finally, we get that 
\begin{align*}
    \operatorname{curl}\operatorname{curl} u (x) &=   \operatorname{D}  \operatorname{div} u (x) - \Delta u (x) = \left[ (\operatorname{D} \operatorname{div}  E)(Ax) - (\Delta E) (Ax)\right] A \\
    &= \left[ (\operatorname{curl}\operatorname{curl}E) (Ax) \right] A = \lambda E (Ax) A = \lambda u (x).
\end{align*}
This proves that if $E^{(1)},\dots, E^{(m ) }$ is an orthonormal basis of the eigenspace associated with the eigenvalue $\lambda$, then $\set{u^{(j)}=(E^{(j)} \circ A) A : j=1,\dots, m }$ is another orthonormal basis for the eigenspace associate with $\lambda$. Since both $\set{E^{(j)} : j=1,\dots, m  }$ and $\set{u^{(j)} : j=1,\dots, m }$ are orthonormal bases, then there exists $R[A] \in O_{ m  }(\mathbb{R})$ with matrix $(R_{ij}[A])_{}i,j=1,\dots,\, m$ such that 
\begin{equation*}
    u^{(j)}= \sum_{l=1}^{m } R_{jl}[A] E^{(l)}.
\end{equation*}
Therefore 
\begin{align*}
    \sum_{j=1}^{m} \abs{E^{(j)}}^2 \circ A &= \sum_{j=1}^{m} \abs{\left( E^{(j)} \circ A \right) A}^2 = \sum_{j=1}^{m} \abs{u^{(j)}}^2 \\
    &  = \sum_{j=1}^{m} \left( \sum_{l=1}^{m}  R_{jl}[A] E^{(l)} \right) \cdot \left( \sum_{h=1}^{m}  R_{jh}[A] E^{(h)} \right)\\
    &= \sum_{j=1}^{m} \sum_{l,h=1}^{m} R_{jl}[A] R_{jh}[A] (E^{(l)} \cdot E^{(h)}) = \sum_{l=1}^{m} \abs{E^{(l)}}^2,
\end{align*}
which proves that   $ \sum_{l=1}^{m} \abs{E^{(l)}}^2$ is a radial function. 

Note that  $\displaystyle \operatorname{curl}u^{(j)} = \sum_{l=1}^{m} R_{jl}[A] \operatorname{curl}E^{(l)}.$
By formula \eqref{changecurl0}        we have that
\begin{equation*}
\operatorname{curl}E \circ A = \frac{\operatorname{curl}u\,  A^{T}    }   {\operatorname{det} A}.
\end{equation*}
Thus
\begin{align*}
    \sum_{j=1}^{m} \abs{\operatorname{curl}E^{(j)}}^2 \circ A &= \sum_{j=1}^{m} \left( \operatorname{curl}E^{(j)} \circ A \right) \cdot \left( \operatorname{curl}E^{(j)} \circ A \right)  \\  &  = \sum_{j=1}^{m} \left( \operatorname{curl}u^{(j)} \, A^T \right) \cdot \left( \operatorname{curl}u^{(j)} \, A^T \right) \frac{1}{\operatorname{det}(A)^2} = \sum_{j=1}^{m} \abs{\operatorname{curl}u^{(j)}}^2 \\
    &= \sum_{j=1}^{m} \left( \sum_{l=1}^{m} R_{jl}[A] \operatorname{curl}E^{(l)} \right) \cdot \left( \sum_{h=1}^{m} R_{jh}[A] \operatorname{curl}E^{(h)} \right)\\
     &  = \sum_{l=1}^{m} \delta_{lh} \operatorname{curl}E^{(l)} \cdot \operatorname{curl}E^{(h)} = \sum_{l=1}^{m} \abs{\operatorname{curl}E^{(l)}}^2,
\end{align*}
which proves that $\sum_{l=1}^{m}|  \operatorname{curl} E^{(l)}  |^2$ is also a radial function. 
\end{proof}

\section{Appendix: eigenfunctions on the ball}

Let  $B$ the ball in $\mathbb{R}^3$ of radius $R$ centred at zero. Here we use the spherical coordinates $(\rho, \theta, \varphi) \in [0,R] \times [0,\pi] \times [0,2\pi]$  where $\theta  $ is the polar angle  ($\theta =0$ at the north pole) and  $\varphi$ is the azimuthal angle.  It is also convenient to use the standard local orthonormal base
$(    \mbox{\boldmath$ \hat\rho$} ,   \mbox{\boldmath$ \hat\theta$},   \mbox{\boldmath$ \hat\varphi$}   )$  canonically associated with  $(\rho, \theta , \varphi)$, namely
$$
\begin{array} {l}
   \mbox{\boldmath$ \hat\rho$}=(\sin \theta \cos\varphi, \sin \theta \sin\varphi , \cos \theta)\\
   \mbox{\boldmath$ \hat\theta$}=(\cos \theta \cos\varphi, \cos\theta \sin\varphi, -\sin\theta)\\
     \mbox{\boldmath$ \hat\varphi$}=(  - \sin\varphi,  \cos\varphi , 0)
\end{array}
$$
The eigenpairs $(\lambda , u)$ of problem \eqref{main} in $B$ can be expressed in terms of  the Riccati-Bessel functions $\psi_n$ and the spherical harmonics $Y_n^m$.

Recall that $\psi_n(z) = \sqrt{\frac{\pi z}{2}} J_{n+\frac{1}{2}}(z)$, where $J_{n+\frac{1}{2}}$ denote Bessel functions of the first kind and  half-integer order, and that 
the functions $\psi_n$ satisfy the differential equation
$$z^2 f''(z) + (z^2 - n(n+1))f(z) = 0.$$
Recall also that the spherical harmonics $Y^m_n(\theta, \varphi)$, with $|m|\le n$, are eigenfunctions of the Laplace-Beltrami operator $\Delta_{\mathbb{S}^2}$ on the unit sphere  $\mathbb{S}^2$, namely 
$$\Delta_{\mathbb{S}^2} Y^m_n + n(n+1) Y^m_n =0.$$
For more details on these functions we refer to \cite{abste}.

It it  proved in \cite{coda2} and in \cite{hanyak}  that 
the eigenpairs $(\lambda , u)$ of problem \eqref{main} in $B$ are given by the union  of the two families
\begin{equation}\label{costaball}\left\{k^2_{nh}, \operatorname{curl}[Y^m_n (\theta, \varphi) \psi_n(k_{nh} \rho)   \mbox{\boldmath$ \hat\rho$}     ] \right\}_{nmh}\, {\rm and }\  \left\{(k'_{nl})^2, \operatorname{curl}\operatorname{curl}[Y^m_n (\theta, \varphi) \psi_n(k'_{nl} \rho)   \mbox{\boldmath$ \hat\rho$} ]    \right\}_{nml}
\end{equation}
where $n, h, l \in {\mathbb{N}}$,  $m\in \mathbb{Z}$ with $|m|\le n$.
Here  $k_{nh}$, $h\in \mathbb{N}$ denote  the positive zeros of the function $k \mapsto \psi_n(kR)$, arranged in increasing order and  $k'_{nl}$, $l\in {\mathbb{N}}$ denote the  positive zeros of the function $k \mapsto \psi'_n(kR)$, arranged  in the same way.

Now, we compute explicitly the eigenvectors in \eqref{costaball}. 
Recalling the formula   $  \operatorname{curl}(q  \hat{\rho})=\nabla q \times \hat{\rho}$ we have 
\begin{eqnarray}\label{expl1}  \lefteqn{
 \operatorname{curl}[Y^m_n (\theta, \varphi) \psi_n(k_{nh} \rho) \hat{\rho}]  }\nonumber \\
 & & 
 =  
\frac{1}{\rho}\left(
	\frac{1}{\operatorname{sin}\theta} \partial_\varphi \left(Y^m_n(\theta, \varphi)\right) \psi_n(k_{nh} \rho)      \mbox{\boldmath$ \hat\theta$}     
	-\partial_\theta \left(Y^m_n(\theta, \varphi)\right) \psi_n(k_{nh} \rho)     \mbox{\boldmath$ \hat\varphi$}    
	\right)  \, .
\end{eqnarray}
Note that the vector in \eqref{expl1} is zero  if and only if $n=0$.
Similarly, using the formula $\operatorname{curl}\operatorname{curl}( q \hat{\rho}) = \nabla(\partial_\rho q) - \rho \Delta(\frac{q}{\rho}) \hat{\rho}$ and the fact that  $-\rho \Delta(Y^m_n   (\theta, \varphi )\psi_n (k\, \rho )/\rho)=k^2Y^m_n (\theta , \varphi) \psi_n(k\, \rho )  $, see \cite{coda2}, 
 we have that
\begin{eqnarray}\label{expl2}  \lefteqn{
 \operatorname{curl}\operatorname{curl}[Y^m_n (\theta, \varphi) \psi_n(k'_{nl} \rho) \hat{\rho}]  }\nonumber  \\
& = 
(k'_{nl})^2 Y^m_n(\theta, \varphi) \left[ \psi''_n(k'_{nl} \rho) + \psi_n(k'_{nl} \rho) \right]   \mbox{\boldmath$ \hat\rho$}  +
\frac{1}{\rho} k'_{nl} \partial_\theta \left(Y^m_n(\theta, \varphi) \right) \psi'_n(k'_{nl} \rho)  \mbox{\boldmath$ \hat\theta$} \nonumber \\
& +   \frac{1}{\rho \operatorname{sin}\theta} k'_{nl} \partial_\varphi \left(Y^m_n(\theta, \varphi) \right) \psi'_n(k'_{nl} \rho)  \mbox{\boldmath$ \hat\varphi$} 
=\frac{1}{\rho}   \left(
   \frac{n(n+1)}{\rho} Y^m_n(\theta, \varphi) \psi_n(k'_{nl} \rho)    \mbox{\boldmath$ \hat\rho$}  \right. \nonumber\\
& \left. k'_{nl} \partial_\theta \left(Y^m_n(\theta, \varphi) \right) \psi'_n(k'_{nl} \rho)  \mbox{\boldmath$ \hat\theta$} +
\frac{1}{\operatorname{sin}\theta} k'_{nl} \partial_\varphi \left(Y^m_n(\theta, \varphi) \right) \psi'_n(k'_{nl} \rho)  \mbox{\boldmath$ \hat\varphi$}
\right), 
\end{eqnarray}
and again this vector is zero if and only if $n=0$.

Then, we are ready to prove the following theorem. Recall that the Riccati-Bessel function $\psi_1$ is given by   $\psi_1(z)=\frac{\operatorname{sin}z}{z}-\operatorname{cos}z$.

\begin{thm}\label{firsteigen} The first Maxwell eigenvalue in a ball of radius $R$ centred at zero is $(k'_{11})^2$ where $k_{11}'$ is the first positive zero of the derivative of the rescaled Riccati-Bessel function $k\mapsto \psi'_1(Rk)$. Its multiplicity is three and the corresponding Electric eigenspace is generated by the three Electric fields
\begin{eqnarray} E^{(m)}(\rho, \theta, \varphi ) = \lefteqn{\frac{1}{\rho}\left(
\frac{2}{\rho} Y^m_1(\theta, \varphi) \psi_1(k'_{11} \rho)    \mbox{\boldmath$ \hat\rho$}     +
k'_{11} \partial_\theta \left(Y^m_1(\theta, \varphi) \right) \psi'_1(k'_{11} \rho)     \mbox{\boldmath$ \hat\theta $} \right. }\nonumber \\
& &\qquad\qquad\qquad+ \left.
\frac{1}{\operatorname{sin}\theta} k'_{11} \partial_\varphi \left(Y^m_1(\theta, \varphi) \right) \psi'_1(k'_{11} \rho)     \mbox{\boldmath$ \hat\varphi$}
\right),
\end{eqnarray}
for $m=-1,0,1$. The associated magnetic fields are given by
\begin{eqnarray}  \lefteqn{
H^{(m)}(\rho, \theta, \varphi) =-\frac{{\rm i} }{k_{11}'  }  \cu  E^{(m)}(\rho, \theta, \varphi)  } \nonumber  \\  & & 
   =   \frac{{\rm i} k'_{11}}{\rho}    \left(
	\frac{1}{\operatorname{sin}\theta} \partial_\varphi \left(Y^m_1(\theta, \varphi)\right) \psi_1(k'_{11} \rho)   \mbox{\boldmath$ \hat\theta $} 
	-\partial_\theta \left(Y^m_1(\theta, \varphi)\right) \psi_1(k'_{11} \rho)   \mbox{\boldmath$ \hat\varphi$}  
	\right)\, .
\end{eqnarray}
\end{thm}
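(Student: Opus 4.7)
The starting point is the complete classification \eqref{costaball} of Maxwell eigenpairs in the ball (proved in \cite{coda2, hanyak}), which divides them into a ``magnetic-type'' family with eigenvalues $k_{nh}^2$ and eigenvectors $\cu[Y_n^m \psi_n(k_{nh}\rho)\hat\rho]$, and an ``electric-type'' family with eigenvalues $(k'_{nl})^2$ and eigenvectors $\cu\cu[Y_n^m \psi_n(k'_{nl}\rho)\hat\rho]$. The first step of the plan is to rule out $n=0$: both families collapse there, since $Y_0^0$ is constant and the expressions in \eqref{expl1}, \eqref{expl2} are proportional to angular derivatives of $Y_n^m$ (the radial coefficient $n(n+1)$ in \eqref{expl2} also vanishing), hence they vanish identically. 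Thus every nonzero Maxwell eigenvalue in $B$ lies in $\{k_{nh}^2 : n \geq 1, h \geq 1\} \cup \{(k'_{nl})^2 : n \geq 1, l \geq 1\}$.

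Next I would locate the minimum of this set. For each fixed $n \geq 1$ the sequences $h \mapsto k_{nh}$ and $l \mapsto k'_{nl}$ are strictly increasing by construction, so it suffices to compare the first members $k_{n1}$ and $k'_{n1}$ as $n$ varies. Two classical facts do the job: (i) since $\psi_n(0) = 0$, Rolle's theorem gives $k'_{n1} < k_{n1}$ for every $n \geq 1$; (ii) both $n \mapsto k_{n1}$ and $n \mapsto k'_{n1}$ are strictly increasing, a standard monotonicity property of the zeros of $J_{n+1/2}$ and its derivative (see \cite{abste}). Together these imply that the minimum is $k'_{11}$, hence the first Maxwell eigenvalue is $(k'_{11})^2$, and by the same strict inequalities it is separated from all other values in the two lists.

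For the multiplicity, observe that for $n=l=1$ the index $m$ takes the three values $-1, 0, 1$, producing the three candidate eigenvectors obtained by specialising \eqref{expl2}. Their linear independence follows from the $L^2(S^2)$-orthogonality of $Y_1^{-1}, Y_1^0, Y_1^1$. Conversely, the strict inequalities of the previous paragraph ensure that no other index combination contributes to the eigenspace of $(k'_{11})^2$, so the multiplicity is exactly three. The explicit form of $E^{(m)}$ stated in the theorem is then the direct read-off of \eqref{expl2} with $n=l=1$ and $k = k'_{11}$.

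Finally, for the magnetic field, set $q := Y_1^m \psi_1(k'_{11}\rho)$ and $M[q] := \cu[q\hat\rho]$, so that $E^{(m)} = \cu\cu[q\hat\rho] = \cu M[q]$. Since $\di M[q] = \di\cu[q\hat\rho] = 0$, the identity $\cu\cu = -\Delta + \nabla\di$ yields $\cu E^{(m)} = -\Delta M[q]$, and the classical relation $\cu\cu M[q] = k^2 M[q]$ (valid because $q/\rho$ solves scalar Helmholtz with wavenumber $k = k'_{11}$, so $M[q]$ solves the corresponding vector Helmholtz equation) gives $\cu E^{(m)} = (k'_{11})^2 M[q]$. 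Substituting into $H^{(m)} = -i \cu E^{(m)}/k'_{11}$ and using \eqref{expl1} with $n=1$ and $k = k'_{11}$ produces the stated closed-form expression. The main technical obstacle I anticipate is the rigorous part of step two: establishing the monotonicity of $n \mapsto k'_{n1}$ without simply quoting tables requires either a Sturm-comparison argument on the Riccati--Bessel equation or an appeal to standard results on Bessel zeros; however, since only a handful of small-$n$ competitors need to be excluded, a finite numerical verification also suffices.
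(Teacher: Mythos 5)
Your proposal follows the same route as the paper: exclude $n=0$, use Rolle/$\psi_n(0)=0$ to see that the minimum must be a zero of some $\psi'_n$, reduce to $n=1$ by a monotonicity-in-$n$ argument, read off the eigenvectors from \eqref{expl2} and the magnetic fields from \eqref{expl1} via $\cu\cu[q\hat\rho]$ being curl-related to $\cu[q\hat\rho]$ with factor $k^2$. The identification of the eigenvalue, the multiplicity count via the three spherical harmonics $Y_1^m$, and the computation of $H^{(m)}$ are all as in the paper.

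The one step you flag as the "main technical obstacle" is indeed where your argument, as written, has a gap, and your proposed fallback does not close it. The zeros of $\psi'_n$ are \emph{not} the zeros of $J'_{n+1/2}$ or of $j'_n$ (since $\psi'_n(z)=j_n(z)+zj'_n(z)$), so the "standard monotonicity property of the zeros of $J_{n+1/2}$ and its derivative" cannot be quoted directly for $k'_{n1}$; and since \emph{all} $n\ge 2$ must be excluded, not a "handful", a finite numerical verification alone is insufficient. The paper's fix is a lower bound rather than exact monotonicity: because $j_n>0$ and $j'_n>0$ on $]0,a'_{n,1}[$, the identity $\psi'_n=j_n+zj'_n$ gives $\psi'_n>0$ there, so the first positive zero of $\psi'_n$ is at least $a'_{n,1}$; the interlacing relations of \cite{liuzou} make $n\mapsto a'_{n,1}$ strictly increasing, so for every $n\ge2$ this lower bound is at least $a'_{2,1}\sim 3.34$, and a single numerical comparison with the first zero $\bar z\sim 2.74$ of $\psi'_1$ finishes the proof. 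Incorporating this estimate (or an equivalent Sturm comparison) is necessary to make your step two rigorous; the rest of your outline is sound.
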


\begin{proof}
Recall that $\psi_n(z)= z j_n(z)$ where $j_n$ are the spherical Bessel functions of the first kind.
Due to the above observations, we  need to find the smallest positive number $\bar{z}>0$ such that there exists $n \geq 1$ with either $\psi_n(\bar{z})=0$ or $\psi'_n(\bar{z})$; the first eigenvalue would then be $(\bar{z}/R)^2$. Notice that the positive zeros of $\psi_n$ coincide with the zeros of $j_n$. 
First, we recall a useful result about the zeros of the spherical Bessel functions and their derivatives. Denote by $a_{n,s}$ and by $a'_{n,s}$ the $s$-th positive zero of the function $j_n$ and $j'_n$ respectively, for all $n \in \N$; then we have the following interlacing relations:
\begin{equation*}
a_{n,1} < a_{n+1,1} < a_{n,2} < a_{n+1,2} < a_{n,3} < \cdots
\end{equation*}
and 
\begin{equation*}
a'_{n,1} < a'_{n+1,1} < a'_{n,2} < a'_{n+1,2} < a'_{n,3} < \cdots.
\end{equation*}
For a proof of these relations we refer to \cite{liuzou}.
From this we can easily deduce that for each $s\in \mathbb{N}$, the sequences $\{a_{n,s}\}_{n=1}^\infty$ and $\{a'_{n,s}\}_{n=1}^\infty$ are strictly monotonically increasing.

Observe that since the functions $\psi_n$ are smooth and $\psi_n(0)=0$ for all $n \in \N$, the number $\bar{z}$ we are looking for is the first positive zero of $\psi'_n$ for some $n \in \N$. We claim that it is the first positive zero of the function 
$$\psi'_1(z)= \frac{\operatorname{cos}z}{z}+\operatorname{sin}z - \frac{\operatorname{sin}z}{z^2},$$
i.e. $\bar{z}\sim 2.74 \pm 0.01$. To prove this, note that  
\begin{equation}\label{derricbes}
\psi'_n(z) = j_n(z) +z j'_n(z).
\end{equation}
Since 
$$j_n(z) = z^n \sum_{m=0}^\infty \frac{(-1)^m}{m! (2m +2n +1){!}{!}} \left(\frac{z^2}{2}\right)^m,$$
then $j_n(0)=0$ and $j_n(z)>0$ for all $n\in \N$ and for all $z$ between zero and $a_{n,1}$.

Then by \eqref{derricbes}, $\psi'_n (z)>0$ for all $z\in ]0, a'_{n,1}[$.  Due to the monotonicity of the sequence $a'_{n,1}$, $n\in \N$, it is then sufficient to prove that $\bar z \in ]0, a'_{2,1}[$, because in this way the first positive zero of all other functions $\psi'_n, n\geq 2$, will be necessarily larger. 
    Since  $a'_{2,1}\sim 3.34 \pm 0.01$, the claim is proved, and the first eigenvalue  is $k'_{11}=(\bar{z}/R)^2$, where $\bar{z}$ is the first positive zero of the function $\psi'_1$.  The eigenvectors and their curls are computed by using formulas \eqref{expl1} and \eqref{expl2} above and it is easily seen that the  multiplicity is three. 

\end{proof}

{\bf Acknowledgments:} The authors are  very thankful to   Prof. Giovanni S. Alberti for pointing out reference  \cite{alberti} and to 
 Prof. Shuichi Jimbo for  helping them in identifying formula (4-88) in  the book \cite{hira} (in Japanese). 
They are also very thankful  to Prof. Ioannis G.  Stratis for  useful discussions and references concerning the mathematical theory of electromagnetism, 
and  to Prof. Enrique Zuazua for bringing to their attention the method which allows to deduce Rellich-type identities from  Hadamard-type formulas.  
The author acknowledge financial support from the research project BIRD191739/19  ``Sensitivity analysis of partial differential equations in the mathematical theory of electromagnetism'' of the University of Padova. 
The authors are members of the Gruppo Nazionale per l'Analisi Matematica, la Probabilit\`a e le loro Applicazioni (GNAMPA) of the Istituto Nazionale di Alta Matematica (INdAM).

\vspace{36pt}

\noindent Pier Domenico Lamberti\\
Dipartimento di Matematica `Tullio Levi-Civita'\\
University of Padova\\
 Via Trieste 63\\
35121 Padova\\ 
Italy\\
 e-mail: lamberti@math.unipd.it\\

\noindent  Michele Zaccaron\\
Dipartimento di Matematica `Tullio Levi-Civita'\\
University of Padova\\
 Via Trieste 63\\
35121 Padova\\ 
Italy\\
 e-mail:  zaccaron@math.unipd.it\\


\begin{thebibliography}{100}


\bibitem{abste} Abramowitz, M., Stegun, I. A., {\em Handbook of mathematical functions with formulas, graphs, and mathematical tables}, National Bureau of Standards Applied Mathematics Series Vol. 55, U.S. Government Printing Office, Washington, D.C., 1964.

\bibitem{alberti}  Alberti, G.S., Capdeboscq, Y., Elliptic regularity theory applied to time harmonic anisotropic Maxwell's equations with less than Lipschitz complex coefficients. SIAM J. Math. Anal. 46 (2014), no. 1, 998-1016. 

\bibitem{ammari} Ammari, H., Bao G., Wood, A.W., 
An integral equation method for the electromagnetic scattering from cavities. 
Math. Methods Appl. Sci. 23 (2000), no. 12, 1057-1072.


\bibitem{bauer} Bauer, S., Pauly, D., Schomburg, M.,  
The Maxwell compactness property in bounded weak Lipschitz domains with mixed boundary conditions. SIAM J. Math. Anal. 48 (2016), no. 4, 2912-2943.


\bibitem{bubu} Bucur, D., Buttazzo, G. Variational methods in shape optimization problems. Progress in Nonlinear Differential Equations and their Applications, 65. Birkhäuser Boston, Inc., Boston, MA, 2005.









\bibitem{bula2015} Buoso, D., Lamberti, P.D., On a classical spectral optimization problem in linear elasticity. New trends in shape optimization, 43-55, Internat. Ser. Numer. Math., 166, Birkhäuser/Springer, Cham, 2015.




\bibitem{calamo} Caman\~o, J., Lackner, C., Monk, P., Electromagnetic Stekloff eigenvalues in inverse scattering, SIAM J. Math. Anal. 49 (2017), 4376-4401.

\bibitem{ces} Cessenat, M., {\em Mathematical Methods in Electromagnetics, Linear Theory and Applications}, Series on Advances in Mathematics for Applied Sciences - Vol. 41, World Scientific Publishing, Singapore, 1996.

\bibitem{cocomo} Cogar, S.,  Colton, D., Monk, P., Eigenvalue problems in inverse electromagnetic scattering theory, in Maxwell's Equations, Analysis and Numerics,  
Radon Series on Computational
and Applied Mathematics Vol. 24,  U. Langer, D. Pauly and S.I Repin (Eds.), De Gruyter,  Berlin/Boston, 2019. 


\bibitem{co91} Costabel, M.,  A remark on the regularity of solutions of Maxwell's equations on Lipschitz domains.
Math. Methods Appl. Sci. 12 (1990), no. 4, 365-368. 

\bibitem{cocoercive} Costabel, M., A coercive bilinear form for Maxwell's equations. J. Math. Anal. Appl. 157 (1991), no. 2, 527-541. 


\bibitem{coda} Costabel, M.,  Dauge M., Maxwell and Lam\'{e} eigenvalues on polyhedra, Math. Methods Appl. Sci. 22 (1999),  243-258. 

\bibitem {coda2} Costabel, M.,  Dauge, M., Maxwell eigenmodes in product domains, 2018.



\bibitem{dalamu} Dalla Riva M., Lanza de Cristoforis M., Musolino P., 
\textit{A Functional Analytic Approach
to singularly perturbed  boundary value problems},  book draft (2020). 

\bibitem{dali3} Dautray, R., Lions, J.-L., {\em Mathematical Analysis and Numerical Methods for Science and Technology}: Vol. 3, {\em Spectral Theory and Applications}, Springer-Verlag, Berlin, 1990.




\bibitem{gira} Girault, V., Raviart P.-A., {\em Finite Element Approximation of the Navier-Stokes Equations}, Lecture Notes in Mathematics - No. 749, Springer, Berlin, 1981.

\bibitem{hanyak}  Hanson, G.W., Yakovlev, A.B., {\em Operator Theory for Electromagnetics}, Springer-Verlag New York, 2002. 

\bibitem{henrot} Henrot, Antoine Extremum problems for eigenvalues of elliptic operators. Frontiers in Mathematics. Birkhäuser Verlag, Basel, 2006

\bibitem{henry}  Henry, D. Perturbation of the boundary in boundary-value problems of partial differential equations. With editorial assistance from Jack Hale and Antônio Luiz Pereira. London Mathematical Society Lecture Note Series, 318. Cambridge University Press, Cambridge, 2005.

\bibitem{hira} Hirakawa, K., Denki Rikigaku. Baifukan, Tokyo (1973) (in Japanese).

\bibitem{jimbo} Jimbo, S., Hadamard variation for electromagnetic frequencies. Geometric properties for parabolic and elliptic PDE's, 179-199, Springer INdAM Ser., 2, Springer, Milan, 2013. 

\bibitem{kswy} Kristensson, G., Stratis, I. G., Wellander, N., Yannacopoulos, A. N., The exterior Calder\'{o}n operator for non-spherical objects, (2019) preprint.








\bibitem{la2014} Lamberti, P.D., Steklov-type eigenvalues associated with best Sobolev trace constants: domain perturbation and overdetermined system, Complex Var. Elliptic Equ. 59 (2014), 309-323.





\bibitem{lala2002} Lamberti, P.D., Lanza de Cristoforis, M. An analyticity result for the dependence of multiple eigenvalues and eigenspaces of the Laplace operator upon perturbation of the domain. Glasg. Math. J. 44 (2002), no. 1, 29-43. 

\bibitem{lala}  Lamberti, P.D., Lanza de Cristoforis, M., A real analyticity result for symmetric functions of the eigenvalues of a domain
dependent Dirichlet problem for the Laplace operator. J. Nonlinear Convex Anal. 5 (2004) 19-42.

\bibitem{lamstra} Lamberti P.D., Stratis, I.G.,   On an interior Calder\'{o}n  operator and  a related Steklov eigenproblem for Maxwell's equations,  preprint 2019. 

\bibitem{lanza} Lanza de Cristoforis, M., Higher order differentiability properties of the composition and of the inversion operator. Indag. Math. (N.S.) 5 (1994), no. 4, 457-482. 


\bibitem {liuzou} Liu, H., Zou, J., {\em Zeros of the Bessel and spherical Bessel functions and their applications for uniqueness in inverse acoustic obstacle scattering}, IMA Journal of Applied Mathematics (2007) - Vol. 72, no. 6, 817-831.

\bibitem{mccon} McConnell, A. J., {\em Application of tensor analysis}, Dover Publications, Inc., New York, 1957.

\bibitem{monk} Monk, P., {\em Finite Element Methods for Maxwell's Equations}, Clarendon Press, Oxford, 2003.

\bibitem{ned} N\'edel\'ec, J.-C., {\em Acoustic and Electromagnetic Equations, Integral Representations for Harmonic Problems}, Applied Mathematical Sciences - Vol. 144, Springer-Verlag, New York, 2001.


\bibitem{pauly}  Pauly, D. On the Maxwell constants in 3D. Math. Methods Appl. Sci. 40 (2017), no. 2, 435-447. 




\bibitem{rsy} Roach, G. F., Stratis, I. G., Yannacopoulos, A. N., {\em Mathematical Analysis of Deterministic and Stochastic Problems in Complex Media Electromagnetics}, Princeton Series in Applied Mathematics, Princeton University Press, Princeton, NJ, 2012.






\bibitem{yin}
Yin, H.M., An eigenvalue problem for curlcurl operators. 
Can. Appl. Math. Q. 20 (2012), no. 3, 421-434. 


\bibitem{weber} Weber, C. Regularity theorems for Maxwell's equations. Math. Methods Appl. Sci. 3 (1981), no. 4, 523-536. 


\bibitem{zhang} Zhang, Z., Comparison results for eigenvalues of curl\!\! curl operator and Stokes operator, Z. Angew. Math. Phys. 69-104 (2018).

\end{thebibliography}
\end{document}